\setlist{nosep}
\title{Geometry of geodesics}
\author{Joonas Ilmavirta\\\texttt{joonas.ilmavirta@jyu.fi}}
\date{July 2020}
\theoremstyle{plain}
\newtheorem{theorem}{Theorem}[section]
\newtheorem{lemma}[theorem]{Lemma}
\newtheorem{corollary}[theorem]{Corollary}
\newtheorem{proposition}[theorem]{Proposition}
\theoremstyle{definition}
\newtheorem{definition}[theorem]{Definition}
\theoremstyle{remark}
\newtheorem{remark}[theorem]{Remark}
\newcommand{\tahti}{${}$\marginpar{{\Large$\qquad\star$}}}
\newcommand\Xqed[1]{\leavevmode\unskip\penalty9999 \hbox{}\nobreak\hfill\quad\hbox{#1}}
\newenvironment{ex}{\exx}{\Xqed{$\bigcirc$}\endexx}
\newenvironment{iex}{\iexx\tahti}{\Xqed{$\bigcirc$}\endiexx}
\newcommand{\qa}{\begin{iex}Do you have any questions or comments regarding section~\thesection? Was something confusing or unclear? Were there mistakes?\end{iex}}
\newcommand{\R}{\mathbb{R}}
\newcommand{\N}{\mathbb{N}}
\newcommand{\ip}[2]{\left\langle#1,#2\right\rangle}
\newcommand{\iip}[2]{\left(#1,#2\right)}
\newcommand{\der}{\mathrm{d}}
\newcommand{\Der}[1]{\frac{\der}{\der #1}}
\newcommand{\pDer}[1]{\frac{\partial}{\partial #1}}
\newcommand{\dd}{\,\der}
\newcommand{\eps}{\varepsilon}
\renewcommand{\phi}{\varphi}
\newcommand{\abs}[1]{\left\lvert #1 \right\rvert}
\newcommand{\aabs}[1]{\left\| #1 \right\|}
\DeclareMathOperator{\spt}{spt}
\newcommand{\xrt}{\mathcal{I}}
\DeclareMathOperator{\id}{id}
\DeclareMathOperator{\im}{im}
\newcommand{\inwb}{{\partial_{\text{in}}(SM)}}
\newcommand{\dum}{{\,\cdot\,}}
\newcommand{\Cs}[3]{\Gamma^{#1}_{\phantom{#1}#2#3}}
\newcommand{\IF}{I}
\newcommand{\NVF}{NV\!F}
\newcommand{\Order}{\mathcal O}
\def \vd{\overset{\smash{\tt{v}}}{\nabla}}
\def \hd{\overset{\smash{\tt{h}}}{\nabla}}
\def \vdiv{\overset{\smash{\tt{v}}}{\mbox{\rm div}}}
\def \hdiv{\overset{\smash{\tt{h}}}{\mbox{\rm div}}}
\DeclareMathOperator{\dive}{div}
\newcommand{\LC}{Levi-Civita}
\newcommand{\nts}[1]{}
\begin{document}

\maketitle

\noindent
These are lecture notes for the course ``MATS4120 Geometry of geodesics'' given at the University of Jyv\"askyl\"a in Spring 2020. Basic differential geometry or Riemannian geometry is useful background but is not strictly necessary. Exercise problems are included, and problems marked important should be solved as you read to ensure that you are able to follow.

Previous feedback has been very useful and new feedback is welcome.


{
\renewcommand{\baselinestretch}{-1.5}\normalsize
\tableofcontents
}

\clearpage

\section{Riemannian manifolds}

\subsection{A look on geometry}

A central concept in Euclidean geometry is the Euclidean inner product, although its importance is somewhat hidden in elementary treatises.
We will relax its rigidity to allow for a certain kind of variable inner product.
This provides a rich geometrical framework --- Riemannian geometry --- and shines new light on the nature of Euclidean geometry as well.

There is much to be studied beyond Riemannian geometry, but we will not go there.
Neither will we study all of Riemannian geometry; we shall focus on the geometry of geodesics.
Gaps will be left, especially early on, and may be filled in by more general courses or textbooks on Riemannian geometry.

Yet another thing we will not be concerned with is regularity.
There are interesting phenomena in various spaces of low regularity, but even those are best understood if one has background knowledge of the simplest possible situation.
All the structures in this course will be smooth, by which we mean $C^\infty$.
Many --- but not all --- of the resulting functions will be smooth as well, and we will take some care to show how smoothness of structure implies smoothness of derived structure.

We will do local Riemannian geometry in the sense that we will implicitly be working in a single coordinate patch.
Even when a more global treatment would be needed using a partition of unity or some such tool, we will pretend that everything is still in a single patch.
This promotes the structures essential for this course.
A reader with more prior familiarity with manifolds is invited to globalize the proofs presented here in a more honest fashion.

Differential geometry can often be done in a local coordinate formalism or using invariant concepts.
We prefer an invariant approach, but the coordinate description will always be given as well so as to give more concrete and calculable definitions.

Some readers may find these notes vague or lacking in detail, but that is entirely purposeful.
The goal is to focus on a certain set of phenomena and not to be held back by technicalities.
One does not need to manually craft every atom to obtain a coherent big picture, and one might even argue that orientation to details can harm by causing the focus to drift away from the ideas that are important for the present goal.

\subsection{Smooth manifolds}

Let $n\in\N$.
A topological $n$-dimensional manifold $M$ is a topoplogical space which is second-countable\footnote{A first-countable space has a countable neighborhood base at each point, whereas a second-countable space has a countable base for the whole topology.}, Hausdorff\footnote{The Hausdorff condition is also known as the separation axiom T2. It means that any two distinct points have disjoint neighborhoods.} and ``looks locally like $\R^n$''.
The last bit in quotes means that any point $x\in M$ has a neighborhood $U\subset M$ for which there exists a homeomorphism $\phi\colon U\to\phi(U)\subset\R^n$.
Such a local homeomorphism is known as a coordinate chart as it gives Euclidean coordinates in an open subset of the manifold.

The conditions above define a topological manifold.
To make it smooth, we introduce more structure.
As $M$ itself is just an abstract space, there is no way to differentiate on it.
All derivatives will have to be considered in local Euclidean coordinates given by a chart, but on a single chart there is nothing to differentiate.

Consider two charts $\phi_i\colon U_i\to\phi_i(U_i)$ with $i=1,2$.
If the domains $U_1$ and $U_2$ intersect, we get a map between the two local coordinate systems.
Specifically, if $U\coloneqq U_1\cap U_2$, the map $\psi\colon\phi_1(U)\to\phi_2(U)$ defined by $\psi\circ\phi_1=\phi_2$ is a map between two open sets in $\R^n$.
This map is called the transition function between the two coordinate charts.

\begin{ex}
Show that the transition function $\psi$ is a homeomorphism.
\end{ex}

We say that the two coordinate charts $\phi_i$ are smoothly compatible if the map $\psi$ is a diffeomorphism.
To either satisfy or irritate the reader, we observe that if the two open sets $U_i$ do not meet, then $\psi$ is the unique map from the empty subset of $\R^n$ to itself and is vacuously smooth; this ensures that checking for compatibility only makes a difference if the two domains meet.

\begin{ex}
Is smooth compatibility an equivalence relation in the set of coordinate charts on a manifold $M$?
\end{ex}

An atlas is a collection of coordinate charts $(U_\alpha,\phi_\alpha)_{\alpha\in A}$ so that they cover the whole manifold: $\bigcup_{\alpha\in A}U_\alpha=M$.
An atlas is smooth if all pairs of coordinate charts are smoothly compatible.
A smooth atlas is maximal if no new coordinate chart can be added to it without breaking smoothness.
A maximal smooth atlas is sometimes called a smooth structure.

\begin{ex}
Show that every atlas is contained in a unique maximal atlas.
\end{ex}

\begin{definition}[Smooth manifold]
A smooth $n$-dimensional manifold is a topological $n$-manifold with a maximal smooth atlas.
\end{definition}

All regularity matters are always defined in terms of the local coordinates given by a fixed atlas.
A function $f\colon M\to\R$ on a smooth manifold is defined to be smooth when $f\circ\phi^{-1}$ is a smooth Euclidean function for any local coordinate map $\phi$.

\begin{iex}
Define what it should mean for a function $f\colon M\to N$ between two smooth manifolds of any dimension to be smooth.
\end{iex}

The Euclidean space $\R^n$ is an $n$-dimensional smooth manifold.
An atlas is given by any open cover (e.g. the singleton of the space itself) and identity maps.

\begin{remark}
\label{rmk:coordinates-after-structure}
Once we have fixed a smooth structure, a valid coordinate chart is precisely a smooth diffeomorphism $\phi\colon U\to\phi(U)\subset\R^n$ from an open set $U\subset M$.
This cannot be taken as a starting point, since before the smooth structure and its charts we do not know what smoothness of such a map would mean.
This only becomes useful later when deciding whether a given map gives valid coordinates.
\end{remark}

\subsection{Curves, vectors and differentials}
\label{sec:curve-vector-differential}

A smooth curve is a smooth map from an interval $I\subset\R$ to our smooth manifold $M$.
The velocity of a curve $\gamma\colon I\to M$ at any given time $t\in I$ is a tangent vector in the tangent space $T_{\gamma(t)}M$.
Indeed, the tangent space can be defined using velocities of curves\footnote{One says that two curves $\gamma_i$ are equivalent if in a fixed local coordinate system the Euclidean curves $\phi\circ\gamma_i$ have the same velocity at the reference point. Then a tangent vector is an equivalence class of curves. To get a coordinate invariant definition, one can also take the equivalence class over systems of coordinates.}, but it is not the only possible approach.
Different points of view are useful, and we will be free to change perspectives as convenient.
It is unimportant for us which approach one chooses to define tangent spaces.

In terms of local coordinates the tangent space $T_xM$ at $x\in M$ can be understood\footnote{It is hopefully evident that any local coordinate chart gives an identification of the tangent space $T_xM$ at $x$ with $\R^n$ with the curve approach of the preceding paragraph.} to be just $\R^n$.
A typical approach is to define a tangent vector as a derivation, a certain kind of a differential operator.
This is related to the curve-based definition as follows:
A tangent vector $W\in T_xM$ can be thought of as a differential operator or as the velocity of a curve $\gamma$ at $t=0$.
A smooth function $f\colon M\to\R$ is differentiated by $Wf=\Der{t}f(\gamma(t))|_{t=0}$.

The same object can function as the velocity of a curve or as a derivation.
It would be possible to give different incarnations of tangent vectors different names and introduce canonical isomorphisms between them, but we will leave any such identifications out.

An important feature of a tangent space is that it is a vector space.
For any $x$ on an $n$-dimensional smooth manifold $M$, the tangent space $T_xM$ is an $n$-dimensional real vector space.
It is therefore isomorphic to $\R^n$, but not in a canonical way.
Any local coordinates give a natural way to identify $T_xM\cong\R^n$, but the many possible coordinate charts in neighborhoods of $x$ give different isomorphisms\footnote{Indeed, all isomorphisms between the two vector spaces can be realized through a coordinate chart of a maximal atlas.}.

The dual vector space $T_xM$ is called the cotangent space and denoted by $T_x^*M$.
One could also define $T_x^*M$ first and then define $T_xM$ by duality.
The most important example of a covector is the differential of a function $f\colon M\to\R$.
The differential at $x\in M$ is $\der f_x\in T_x^*M$ and the duality pairing is defined by
\begin{equation}
\der f_x(W)
=
Wf
\end{equation}
for any $W\in T_xM$, considered as a derivation.
Be careful to call this the differential, not the gradient, of a function.

We shall study vectors and covectors in more detail later, but the very basics are best learned from introductory material to differential geometry.

\subsection{Algebraic constructions on the tangent bundle}
\label{sec:alg-TM}

All of the tangent spaces of a manifold together make up the tangent bundle.
That is, one can define the tangent bundle of our smooth manifold $M$ to be the disjoint union
\begin{equation}
TM
=
\coprod_{x\in M}T_xM.
\end{equation}
This is a union of vector spaces, and many operations are done tangent space by tangent space.\footnote{The tangent bundle is also a smooth manifold itself, and we shall make heavy use of that later on. But for now it is merely a collection of tangent spaces. Treating it as a manifold opens new doors, but we will not open them yet.}

In general, a bundle is a disjoint union of spaces of some kind attached to each point.
(The tangent bundle is a union of tangent spaces.)
These spaces, called the fibers of the bundle, are isomorphic to each other but not necessarily in a canonical way.
(Since $T_xM\cong\R^n$ for all $x\in M$, the tangent spaces are indeed isomorphic, but not canonically.)

A section of the tangent bundle $TM$ is a map $W\colon M\to TM$ so that $W(x)\in T_xM$ for all $x\in M$.
A section of the tangent bundle is called a vector field.
The section of any other bundle is defined in a similar fashion.
We will define later what smoothness of a section means.
This will be done twice, in local coordinates (section~\ref{sec:tensor-in-coordinates}) and in an invariant fashion (section~\ref{sec:TM}).

Any vector space operation can be perform for the tangent bundle (or any vector bundle for that matter).
For example the dual of the tangent bundle is the cotangent bundle, where the dual is taken fiber by fiber.
The cotangent bundle $T^*M$ is the disjoint union of the cotangent spaces $T_x^*M$.

Similarly, one can take the tensor product $TM\otimes TM$, which is a bundle whose fiber at $x$ is $T_xM\otimes T_xM$.
Tensor products of the tangent and cotangent bundles give rise to many of the bundles one encounters in differential geometry.
For example, the Riemann curvature tensor $R$ is a section of the bundle $TM\otimes T^*M\otimes T^*M\otimes T^*M$.
In other words, for any $x\in M$ we have a multilinear map
\begin{equation}
R(x)
\colon
T_x^*M\times T_xM\times T_xM\times T_xM
\to
\R.
\end{equation}
It is a $1$-contravariant and $3$-covariant tensor field, also called a tensor field of type $(1,3)$.

A vector field is a tensor field of type $(1,0)$ and covectors have type $(0,1)$.
A scalar has type $(0,0)$.

For another example of a tensor field, recall that a linear maps $T_xM\to T_xM$ can be thought of as elements of the tensor product $T_xM\otimes T_x^*M$.
The bundle with these fibers is $TM\otimes T^*M$.
Sections of this bundle are ``matrix fields'' in the sense that at each point $x\in M$ it provides a linear map $T_xM\to T_xM$.
These are tensor fields of type $(1,1)$.
(This is the endomorphism bundle of $TM$, so called because the value of a section at each point is an endomorphism of the relevant tangent space.)

Tensor products can be understood as spaces of multilinear maps.
First, the dual of a real vector space $E$ is the space of linear maps $E\to\R$.
We can write $E^*=ML(E;\R)$, so it is a multilinear map of one variable --- which is a complicated way to say ``linear''.
We also have $E=ML(E^*;\R)$ using the natural identification $E=(E^*)^*$ of finite-dimensional spaces.
Now we can proceed to tensor products.
We have
$E^*\otimes E^*=ML(E\times E;\R)$
and
$E\otimes E\otimes E^*=ML(E^*\times E^*\times E;\R)$.
Using associativity of tensor products we can also see $E^*\otimes E$ as $ML(E;E)$, and this particular interpretation is studied in exercise~\ref{ex:linear-maps-tensor}.
This allows us to see the Riemann curvature tensor as a multilinear map $(T_xM)^3\to T_xM$.

\begin{ex}
\label{ex:linear-maps-tensor}
Let $E,F$ be two finite-dimensional real vector spaces.
There is a natural mapping $\Phi$ from the space $L(E;F)$ of linear maps $E\to F$ to the tensor product $F\otimes E^*$.
Describe this map in formulas (either for itself or an inverse) or in words or in pictures --- or a combination thereof.
\end{ex}

The idea of bundles is necessarily a little vague here as our focus is elsewhere.
The hope is that these first impressions make it easier to pick up ideas along the way and make the reader motivated and well equipped to treat general bundles later on.
We will return to the structure of bundles in section~\ref{sec:TM}.

\subsection{Coordinate representations of tensor fields}
\label{sec:tensor-in-coordinates}

Consider now a single coordinate patch $U\subset M$.
Identifying $U$ with $\phi(U)\subset\R^n$, we can use Euclidean coordinates\footnote{The index is up. This is just a convention, but life is much easier when one sticks to it.} $x^i$ on this subset of $M$.
Let us consider the tangent and cotangent spaces at a point $x\in U$.
Both can be identified with $\R^n$, but it is good to choose a specific identification.

A natural basis for the Euclidean space $\R^n$ consists of the standard unit vectors.
However, when considering tangent vectors as derivations (first order differential operators), it is most natural to let the basis vectors be\footnote{When we differentiate with respect to something that has an upper index, we get a lower index. In time this hopefully makes sense.}
\begin{equation}
\partial_i
\coloneqq
\left.
\pDer{x^i}
\right|_{x}
\in T_xM
.
\end{equation}
Evaluation at the point $x$ and indeed the dependence on $x$ is left implicit in the notation $\partial_i$.
The notation would quickly become unwieldy with everything spelled out, which is why we have chosen to abbreviate the notation of the basis vectors.

The corresponding dual basis consists of the vectors $\der x^i\in T_x^*M$.
Just as in regular linear algebra, the dual basis is defined by
\begin{equation}
\der x^i(\partial_j)
=
\delta^i_j.
\end{equation}
The Kronecker delta $\delta^i_j$ tends to have one index up and another one down.
In fact, the $i$th component of the local coordinates $\phi\colon U\to\R^n$ can be seen as a map $x^i\colon U\to\R$, and the differential of this map $\der x^i$ is the dual basis element.
This justifies the notation.

A vector $W\in T_xM$ and a covector $\alpha\in T_x^*M$ can now be expressed in these bases:
\begin{equation}
\begin{split}
W
&=
W^i\partial_i,
\qquad\text{and}\\
\alpha
&=
\alpha_i\der x^i.
\end{split}
\end{equation}
Observe that the basis and the components have indices in the opposite places.

Here we have for the first time employed the Einstein summation convention:
\begin{equation}
\begin{split}
W^i\partial_i
&\coloneqq
\sum_{i=1}^n W^i\partial_i,
\qquad\text{and}\\
\alpha_i\der x^i
&\coloneqq
\sum_{i=1}^n \alpha_i\der x^i.
\end{split}
\end{equation}
That is, when an index appears once up and once down, all possible values are summed over.
If an index appears more than twice or both occurrences are up or both down, there is an issue.\footnote{This is a non-issue in Euclidean geometry.}

\begin{iex}
Show that
\begin{equation}
W^i
=
\der x^i(W)
\end{equation}
and
\begin{equation}
\alpha_i
=
\alpha(\partial_i).
\end{equation}
This gives us a way to find the components of a vector or a covector in a given basis.

As often, dependence on $x$ was left implicit.
\end{iex}

These basis elements on the tangent and cotangent spaces are crucial for building the smooth structure of the tangent and cotangent bundles in section~\ref{sec:TM}.

Consider then a tensor field $a$ of type $(1,1)$.
As discussed in section~\ref{sec:alg-TM}, $a(x)\colon T_xM\to T_xM$ is a linear map.
As any linear map, $a(x)$ can be expressed as a matrix once a basis is given.
Indeed,
\begin{equation}
a(x)
=
a^i_j(x)\partial_i\der x^j.
\end{equation}
The component $a^i_j$ describes how the $j$th component of the input contributes to the $i$th component of the output.
The component can be extracted from $a(x)$ using
\begin{equation}
\label{eq:(1,1)-component}
a^i_j(x)
=
\der x^i(a(x)\partial_j).
\end{equation}
The general method is the same: operate with the tensor field on the basis vector field(s) and then use the basis covector field(s) to evaluate the component(s).

Smoothness of a tensor field means that all component functions are smooth.
Given some local coordinates, each component of a tensor field is a real-valued function.
The derivative of the component $a^i_j$ with respect to the coordinate $x^k$ is denoted by $a^i_{j,k}$.
Such derivatives do not behave well enough under changes of coordinates, so the coordinate derivatives are not generally the components of a tensor field.

\begin{ex}
Find the components $R^i_{\phantom{i}jkl}$ of a type $(1,3)$ tensor field $R$ using the basis vectors and covectors.
\end{ex}

As we only use a single coordinate system, we need not study how the tensor fields transform when coordinates are changed.

\subsection{A new look at Euclidean linear algebra}

Consider the manifold $M=\R^n$ and in particular its tangent space $T_0M\cong\R^n$.
The basis vectors are given by $e_1=(1,0,\dots,0)$ and the other standard basis vectors $e_i$.
In our Riemannian notation $e_i=\partial_i$.
A vector is written in terms of the basis as $V=V^ie_i$.

It is natural to think of a vector as a column vector.
A row vector corresponds to a covector, $\alpha=\alpha_ie^i$, where $e^i$ are the dual basis vectors to $e_i$.
There is a natural identification of the two bases, given by
\begin{equation}
e^i(W)
=
\ip{e_i}{W}.
\end{equation}
If we map $e_i\mapsto e^i$ and extend linearly, we get a linear map $(\R^n)^*\to\R^n$.
This identification is based on the inner product.
In general, inner products are a way to identify a space with its dual.

The $i$th component of a vector $W$ is found by
\begin{equation}
W^i
=
e^i(W)
=
\ip{e_i}{W}
\end{equation}
as familiar.

\begin{iex}
Given a linear map $L\colon\R^n\to\R^m$, how can you find its matrix elements with respect to some bases on the two spaces?
Compare to~\eqref{eq:(1,1)-component}.
\end{iex}

By the identification of the bases we can identify column vectors with row vectors.
This corresponds exactly to transposition.
The duality pairing $\alpha(W)$ is just the matrix product of a row vector and a column vector.
The inner product of two column vectors can be obtained by transposing one of them and then multiplying as matrices.
The concept of transpose is based on the inner product and changes if the inner product is changed.
And we will change it.

\subsection{Riemannian metric}

A Riemannian metric is a smooth tensor field $g$ of type $(0,2)$ that satisfies a positivity condition and a symmetry condition.
As a tensor field of this type, $g(x)$ is a bilinear map $T_xM\times T_xM\to\R$.
The positivity condition is that
\begin{equation}
\label{eq:g-positive}
g(x)(v,v)>0
\end{equation}
whenever $v\in T_xM$ is non-zero.
The symmetry condition is that
\begin{equation}
\label{eq:g-symmetric}
g(x)(v,w)
=
g(x)(w,v)
\end{equation}
for all $v,w\in T_xM$.
This gives rise to a rich geometric structure.

The convention in the sequel is as follows:
$M$ is always a smooth manifold of dimension $n$, and it has a fixed Riemannian metric $g$.
In other words, $(M,g)$ is a Riemannian manifold.
We assume $M$ to be connected.\footnote{If $M$ is disconnected, the different connected components have completely independent lives. We lose awkward situations but no generality in assuming connectedness.}
Unless otherwise mentioned, we will be working in a single coordinate chart so as to avoid unnecessary complications.

\qa

\section{Distance and geodesics}

\subsection{An inner product}
\label{sec:music}

A Riemannian metric gives an inner product on the tangent space.
Namely, the inner product of two vectors $v,w\in T_xM$ is given simply by
\begin{equation}
\ip{v}{w}
\coloneqq
g(v,w).
\end{equation}
We will often leave the dependence of the metric tensor on the base point $x$ implicit.

\begin{ex}
Expand objects in terms of their components and show that $\ip{v}{w}=g_{ij}(x)v^iw^j$.
\end{ex}

As described in the Euclidean setting, an inner product gives a canonical way to identify vectors with covectors.
In fact, one can consider $g$ as a linear map $T_xM\to T_x^*M$ given by
\begin{equation}
v
\mapsto
g(v,\dum).
\end{equation}
Written in terms of components, the vector with components $v^i$ is mapped to the covector with components $g_{ij}v^j$.
This covector is denoted by $v^\flat$ and called ``$v$ flat''.

\begin{iex}
Show that the map $v\mapsto v^\flat$ is bijective.
You will need the positivity condition \eqref{eq:g-positive}.
\end{iex}

The inverse of the map $v\mapsto v^\flat$ maps a covector $\alpha$ to the vector $\alpha^\sharp$, called ``$\alpha$ sharp''.
These are the musical isomorphisms and they satisfy $v=(v^\flat)^\sharp$ and $\alpha=(\alpha^\sharp)^\flat$.

Given the canonical bases on $T_xM$ and $T_x^*M$, the matrix of the ``flat map'' is $g_{ij}$ itself.
The matrix of the inverse map, the ``sharp map'', is denoted by $g^{ij}$ and is the inverse of this matrix --- it satisfies $g^{ij}g_{jk}=\delta^i_k$.
Invariantly, this can be denoted as $g^{-1}$.

\begin{ex}
Show that $g^{ij}((v^\flat)_i,(w^\flat)_j)=\ip{v}{w}$.
\end{ex}

\begin{ex}
Show that $g^{ij}$ defines an inner product on $T_x^*M$ and the musical isomorphisms preserve the inner product.
\end{ex}

The inner products give us natural definitions of norms for the tangent and cotangent spaces:
$\abs{v}=\ip{v}{v}^{1/2}$ and $\abs{\alpha}=\ip{\alpha}{\alpha}^{1/2}$ using the relevant inner products.
The musical isomorphisms are isometries.
The (co)tangent space $T_x^{(*)}M$ is also isometric to $\R^n$, as are all $n$-dimensional real inner product spaces.

Due to the way the musical isomorphisms work in coordinates --- $(v^\flat)_i=g_{ij}v^j$ and $(\alpha^\sharp)^i=g^{ij}\alpha_j$ --- they are sometimes called lowering and raising indices.

Recall that the differential $\der f$ of a scalar function $f\colon M\to\R$ is a covector field.
The corresponding vector field is called its gradient: $\nabla f=(\der f)^\sharp$.

One would obtain much more general structures by taking a norm on the tangent space that does not correspond to an inner product.
This would lead to Finsler geometry.

\subsection{On computations in local coordinates}

Let us consider the flat map as an example.
If $v$ is a vector field, then $\alpha=v^\flat$ is given in local coordinates as $\alpha_i=g_{ij}v^j$.
Including the variable and the sum explicitly, this means
\begin{equation}
\label{eq:flat-in-coordinates}
\alpha_i(x)=\sum_jg_{ij}(x)v^j(x).
\end{equation}
If we need to compute a derivative like $\partial_k\alpha_{i}$ in these local coordinates, it can be helpful to look at~\eqref{eq:flat-in-coordinates}.
Each $\alpha_i(x)$ is a real valued function of $x\in\R^n$ (or rather only in the set $\phi(U)\subset\R^n$), and so are $g_{ij}(x)$ and $v^j(x)$.
Each component is just a real-valued function --- coordinate expressions are almost always expressions containing sums and products of real numbers nothing more elaborate.
When you differentiate, the normal product rule applies without any changes.

If a specific computation confuses you, please bring it up in the end-of-section exercise or otherwise.
Future versions of the notes benefit from all feedback.

\subsection{Length of curve}

Recall that the length of a smooth curve $\gamma\colon[a,b]\to\R^n$ is defined by
\begin{equation}
\ell(\gamma)
=
\int_a^b\abs{\dot\gamma(t)}\dd t.
\end{equation}
We define the length of a smooth curve $\gamma\colon[a,b]\to M$ by the same formula.

To properly do so, we must know what $\dot\gamma(t)$ is.
As discussed in section~\ref{sec:curve-vector-differential}, velocities of curves are one way to define tangent vectors in the first place, so $\dot\gamma(t)$ should be an element of $T_{\gamma(t)}M$.

In local coordinates one can write $\dot\gamma(t)=\dot\gamma^i(t)\partial_i$.
The length of $\dot\gamma(t)$ is given by the metric tensor.
Notice how the norm used to measure the length of $\dot\gamma(t)$ is different for different values of $t$.

Everything is defined so that the length of a curve is independent of the choice of coordinates and parametrization.

\subsection{Distance between points}

Let $p,q\in M$ be any two points.
As $M$ is connected, there is a smooth path between the two points.
We define the distance between them to be
\begin{equation}
d(p,q)
=
\inf
\{\ell(\gamma);\gamma\colon[0,1]\to M,\gamma(0)=p,\gamma(1)=q\}.
\end{equation}
It is typical to choose the curve family so that $\gamma$ is piecewise smooth, but smooth will work as well.

\begin{ex}
\label{ex:cut-corner}
Explain with a picture or maybe even a proof why minimizing length of piecewise smooth curves will lead to the same infimum as minimizing over smooth curves.
\end{ex}

This concept of distance defines a metric in the sense of metric spaces.
But we will restrict the word ``metric'' to the metric tensor and call this $d$ the distance.

\begin{ex}
Give an example of two points in a Euclidean domain where a minimizing curve does not exist within the domain.
The same issue can occur on manifolds, so existence of minimizers requires assumptions.
(A local result is given in exercise~\ref{ex:local-geodesic-short}.)
\end{ex}

\begin{proposition}
The manifold $M$ with the distance $d$ satisfies all the axioms of a metric space.
Its topology coincides with that of the topological manifold $M$.
\end{proposition}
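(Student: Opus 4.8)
The plan is to verify the metric-space axioms one at a time, with only non-degeneracy requiring real work, and then to deduce both non-degeneracy and the topology statement from a single local comparison of $g$ with the Euclidean metric of a coordinate chart. The easy axioms: $d(p,q)\ge 0$ because the integrand $\abs{\dot\gamma(t)}$ in $\ell(\gamma)$ is non-negative; $d(p,p)=0$ because the constant curve at $p$ has zero velocity, hence zero length, and $d(p,q)<\infty$ because a smooth curve joining $p$ to $q$ exists (a connected manifold is path-connected, and the path may be taken smooth as noted above) and any smooth curve on a compact parameter interval has finite length, its speed $t\mapsto\abs{\dot\gamma(t)}$ being continuous; $d(p,q)=d(q,p)$ because reversing a curve via $t\mapsto\gamma(1-t)$ negates the velocity without changing its norm, so it preserves length, and reversal is a bijection between competitors for $d(p,q)$ and for $d(q,p)$; and $d(p,r)\le d(p,q)+d(q,r)$ by concatenating a curve from $p$ to $q$ with one from $q$ to $r$, which produces a piecewise smooth curve from $p$ to $r$ whose length is the sum, after which one takes infima and invokes Exercise~\ref{ex:cut-corner}.

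For the remaining axiom and the topology, fix $x_0\in M$ and a coordinate chart around it, identify the chart with an open subset of $\R^n$, and choose a closed Euclidean ball $K=\overline{B}(x_0,\rho)$ contained in it. The function $(x,v)\mapsto g_{ij}(x)v^iv^j$ is continuous and strictly positive on the compact set $K\times S^{n-1}$, so it lies between two positive constants $c^2$ and $C^2$ there; by homogeneity of degree two in $v$ this gives $c\abs{v}_{\mathrm{euc}}\le\abs{v}_g\le C\abs{v}_{\mathrm{euc}}$ for every $x\in K$ and $v\in T_xM$, and hence $c\,\ell_{\mathrm{euc}}(\sigma)\le\ell_g(\sigma)\le C\,\ell_{\mathrm{euc}}(\sigma)$ for every curve $\sigma$ whose image lies in $K$.

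Now the key point is a trapping argument. If a curve $\gamma$ starts at $x_0$ and has $\ell_g(\gamma)<c\rho$, it cannot leave $K$: otherwise, with $t_0$ the first time its image meets $\partial K$ (well defined by continuity), the arc $\gamma|_{[0,t_0]}$ stays in $K$ and so $\ell_g(\gamma)\ge\ell_g(\gamma|_{[0,t_0]})\ge c\,\ell_{\mathrm{euc}}(\gamma|_{[0,t_0]})\ge c\rho$, a contradiction. Hence for $q\ne x_0$ and any competitor $\gamma$ from $x_0$ to $q$, either $\ell_g(\gamma)\ge c\rho$ already, or $\ell_g(\gamma)<c\rho$, in which case $\gamma$ is trapped in $K$ and therefore $\ell_g(\gamma)\ge c\,\ell_{\mathrm{euc}}(\gamma)\ge c\abs{x_0-q}_{\mathrm{euc}}>0$; in either case $\ell_g(\gamma)$ is bounded below by a positive constant depending only on $q$, so $d(x_0,q)>0$. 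As $x_0$ was arbitrary, non-degeneracy follows. For the topology, note in addition that when $q$ is close enough to $x_0$ the straight coordinate segment from $x_0$ to $q$ lies in $K$, yielding $d(x_0,q)\le C\abs{x_0-q}_{\mathrm{euc}}$; together with the lower bound this makes $d$ bi-Lipschitz equivalent, near $x_0$, to the Euclidean coordinate distance, so the two define the same neighbourhoods at $x_0$. Letting $x_0$ range over $M$, the metric topology of $d$ equals the manifold topology.

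The step I expect to be the main obstacle is the non-degeneracy implication $d(p,q)=0\Rightarrow p=q$: a competitor curve in the definition of $d$ is not confined to any single coordinate patch, so one must rule out the possibility that wandering out of a chart and back in shortens it. The trapping argument above is precisely what excludes this, and it is the one place where compactness and the positivity condition~\eqref{eq:g-positive} are used in an essential way; the rest is routine.
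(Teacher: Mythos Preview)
Your proof is correct and follows precisely the approach the paper sketches: the paper does not give a self-contained proof but defers the metric axioms to exercises and remarks that the topology statement reduces to a local bi-Lipschitz comparison with the Euclidean metric of a chart (see the sentence following the proposition and exercise~\ref{ex:d=0}). Your compactness argument for the two-sided bound on $\abs{v}_g$ and your trapping argument for curves of small $g$-length are exactly what those hints ask for, carried out in full.
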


The proof of coincidence of the two topologies can be found in many introductory treatises of Riemannian geometry.
It suffices to prove such equivalence within a chart, and that follows from the distance being bi-Lipschitz to the underlying Euclidean metric where the coordinates live.
See exercise~\ref{ex:d=0}.

\begin{iex}
Explain why $d$ is symmetric and satisfies the triangle inequality.
\end{iex}

\begin{ex}
\label{ex:d=0}
Show that if $d(x,y)=0$, then $x=y$.
You can work in local coordinates near $x$.
Argue by continuity that $C^{-1}\abs{v}_{\R^n}\leq\abs{v}\leq C\abs{v}_{\R^n}$ for all $v\in TU$ for a small neighborhood $U$ of $x$ (in those local coordinates) and for some constant $C>1$.
Using that estimate find a lower bound on the length of any smooth curve joining $x$ and $y$.
\end{ex}

\subsection{First variation of length}

We want to find the shortest curve between two points.
We do so using smooth calculus of variations.
The aim is to find the Euler--Lagrange equation and later show that its solutions are actually minimal.

Let $\Gamma\colon[0,1]\times(-\eps,\eps)\to M$ be a smooth map.
We understand $\Gamma(t,s)$ to be a family of curves so that each $\Gamma(\dum,s)$ is a curve.
We want to differentiate
\begin{equation}
\ell(\Gamma(\dum,s))
=
\int_0^1\abs{\partial_t\Gamma(t,s)}\dd t
\end{equation}
at $s=0$.
Let us work in local coordinates again.

\begin{ex}
\label{ex:d-abs-G}
Show that
\begin{equation}
\begin{split}
&\partial_s
[
g(\Gamma(t,s))(\partial_t\Gamma(t,s),\partial_t\Gamma(t,s))
]^{1/2}
\\&=
\frac1{2\abs{\partial_t\Gamma}}
(
g_{ij,k}(\Gamma)\partial_s\Gamma^k\partial_t\Gamma^i\partial_t\Gamma^k
+
2g_{ij}(\Gamma)\partial_t\Gamma^i\partial_t\partial_s\Gamma^j
),
\end{split}
\end{equation}
where the argument $(t,s)$ of $\Gamma$ has been left out for clarity.
Here we used the derivative notation $g_{ij,k}\coloneqq \partial_kg_{ij}$ again.
\end{ex}

We are now ready to compute the variation of length of a family of constant speed curves from $p$ to $q$.
Recall that reparametrization does not change length so we are free to do so.
This reparametrization preserves smoothness as long as $\partial_t\Gamma\neq0$.

\begin{proposition}
\label{prop:1st-variation}
Let $\Gamma\colon[0,1]\times(-\eps,\eps)\to M$ be a smooth map so that
\begin{itemize}
\item
$\abs{\partial_t\Gamma(t,0)}$ is constant,
\item
$\Gamma(0,s)=p$ for all $s$, and
\item
$\Gamma(1,s)=q$ for all $s$.
\end{itemize}
Denoting\footnote{Notice that the second order derivatives are computed in local coordinates. We do not yet have proper tools to handle them invariantly. We will later, and the formula simplifies considerably; see~\eqref{eq:1st-variation-invariantly}.}
$\gamma(t)\coloneqq\Gamma(t,0)$,
$\dot\gamma(t)\coloneqq\partial_t\Gamma(t,0)$, 
$\ddot\gamma(t)\coloneqq\partial_t^2\Gamma(t,0)$,
and
$V(t)\coloneqq\partial_s\Gamma(t,s)$,
we have
\begin{equation}
\left.
\partial_s
\ell(\Gamma(\dum,s))
\right|_{s=0}
=
\int_0^1
\frac1{\abs{\dot\gamma}}
V^k
\left[
\frac12g_{ij,k}\dot\gamma^i\dot\gamma^j
-g_{ik,j}\dot\gamma^i\dot\gamma^j
-g_{ik}\ddot\gamma^i
\right]
\dd t.
\end{equation}
\end{proposition}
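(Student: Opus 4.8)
The plan is to differentiate $\ell(\Gamma(\dum,s))=\int_0^1\abs{\partial_t\Gamma(t,s)}\dd t$ under the integral sign, feed the integrand into Exercise~\ref{ex:d-abs-G}, and then integrate by parts in $t$ to move the single $\partial_t$ off the variation field.

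First I would address differentiability. Since $\abs{\partial_t\Gamma(t,0)}$ is a nonzero constant (the constant curve being excluded, consistently with the $\tfrac1{\abs{\dot\gamma}}$ appearing in the statement), continuity of $\partial_t\Gamma$ and compactness of $[0,1]$ give $\partial_t\Gamma(t,s)\neq0$ for all $t\in[0,1]$ and all $s$ in a possibly smaller neighbourhood of $0$. On that region $(t,s)\mapsto[g(\Gamma)(\partial_t\Gamma,\partial_t\Gamma)]^{1/2}$ is smooth, so $\partial_s$ passes inside the integral. Applying Exercise~\ref{ex:d-abs-G} and then setting $s=0$, with the abbreviations $\partial_s\Gamma^k(t,0)=V^k$, $\partial_t\Gamma^i(t,0)=\dot\gamma^i$ and $\partial_t\partial_s\Gamma^j(t,0)=\dot V^j$, yields
\[
\left.\partial_s\ell(\Gamma(\dum,s))\right|_{s=0}
=
\int_0^1\frac1{2\abs{\dot\gamma}}\left(g_{ij,k}V^k\dot\gamma^i\dot\gamma^j+2g_{ij}\dot\gamma^i\dot V^j\right)\dd t.
\]

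The crucial step is to integrate by parts in $t$ in the term carrying $\dot V^j$; this is exactly where the constant-speed hypothesis enters, as it lets the factor $\tfrac1{\abs{\dot\gamma}}$ pass through $\partial_t$ as a constant. Thus
\[
\int_0^1\frac1{\abs{\dot\gamma}}g_{ij}\dot\gamma^i\dot V^j\dd t
=
\left[\frac1{\abs{\dot\gamma}}g_{ij}\dot\gamma^iV^j\right]_0^1
-\int_0^1\frac1{\abs{\dot\gamma}}\bigl(g_{ij,k}\dot\gamma^k\dot\gamma^i+g_{ij}\ddot\gamma^i\bigr)V^j\dd t,
\]
where I expanded $\partial_t(g_{ij}\dot\gamma^i)=g_{ij,k}\dot\gamma^k\dot\gamma^i+g_{ij}\ddot\gamma^i$ by the chain and product rules in the coordinate patch. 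The boundary term vanishes because $\Gamma(0,s)=p$ and $\Gamma(1,s)=q$ are independent of $s$, so $V(0)=V(1)=0$. Substituting this back and relabelling the summed and free indices so that the variation field appears as $V^k$, the two surviving contributions combine into the claimed
\[
\int_0^1\frac1{\abs{\dot\gamma}}V^k\left[\frac12g_{ij,k}\dot\gamma^i\dot\gamma^j-g_{ik,j}\dot\gamma^i\dot\gamma^j-g_{ik}\ddot\gamma^i\right]\dd t.
\]

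The argument is essentially routine once these two ingredients — differentiation under the integral and integration by parts exploiting the constant speed — are identified. I expect the only points needing care to be the justification of passing $\partial_s$ inside the integral (hence the early appeal to $\dot\gamma\neq0$) and the index bookkeeping in the final relabelling of the integration-by-parts term; there is no substantial obstacle.
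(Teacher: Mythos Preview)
Your proof is correct and follows exactly the same approach as the paper: apply Exercise~\ref{ex:d-abs-G}, then integrate by parts in $t$ using the constant-speed hypothesis to treat $\abs{\dot\gamma}^{-1}$ as a constant and the fixed-endpoint conditions to kill the boundary terms. You are simply more explicit about justifying differentiation under the integral sign and about the index relabelling than the paper's terse proof.
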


\begin{proof}
Exercise~\ref{ex:d-abs-G} shows that the derivative in question is
\begin{equation}
\int_0^1
\frac1{\abs{\dot\gamma}}
\left[
\frac12g_{ij,k}V^k\dot\gamma^i\dot\gamma^j
+g_{ij}\dot\gamma^i\partial_tV^k
\right]
\dd t
.
\end{equation}
We integrate by parts in the second term to take the $\partial_t$ away from $V^k$.
As $\abs{\dot\gamma}$ is independent of $t$ and $V(0)=0$ and $V(1)=0$, we find the desired form of the derivative.
\end{proof}

If the curve $\gamma(t)=\Gamma(t,0)$ is to be minimizing within this family, this derivative should vanish for any variation field $V(t)$.
This inspires us to define a geodesic to be a constant speed curve which satisfies
\begin{equation}
\label{eq:ge-1}
\frac12g_{ij,k}\dot\gamma^i\dot\gamma^j
-g_{ik,j}\dot\gamma^i\dot\gamma^j
-g_{ik}\ddot\gamma^i
=
0.
\end{equation}
In fact, it turns out that solutions to this equation automatically have constant speed; see corollary~\ref{cor:geod-constant-speed}.

It is important to read this result the right way.
We have shown that a smooth minimizing curve is a geodesic --- which means satisfying the geodesic equation.
We have not shown that minimizers exist or that they are smooth.
That will come much later.

\subsection{The Christoffel symbol}

The Christoffel symbol is a gadget that looks a bit like a type $(1,2)$ tensor field --- but is not due to the derivatives --- is defined in local coordinates as
\begin{equation}
\Cs ijk
=
\frac12
g^{il}
(g_{lj,k}+g_{lk,j}-g_{jk,l}).
\end{equation}
This symbol will appear often in coordinate formulas.
We immediately point out the symmetry property:
\begin{equation}
\Cs ijk
=
\Cs ikj.
\end{equation}

\begin{ex}
Show that equation~\eqref{eq:ge-1} is equivalent with
\begin{equation}
\label{eq:ge-2}
\ddot\gamma^i+\Cs ijk\dot\gamma^j\dot\gamma^k=0.
\end{equation}
This is called the geodesic equation.
\end{ex}

Observe that in Euclidean geometry where $g_{ij}(x)$ is independent of the base point $x$ the Christoffel symbol vanishes.
On more general manifolds its appearance is inevitable, but it will disappear in an invariant treatment.
In fact, it is what helps make derivatives invariant.

If one does a non-inertial change of coordinates in classical mechanics, one introduces pseudoforces such as the centrifugal force.
The Christoffel symbol can be seen as a pseudoforce term: a geodesic wouold continue at constant speed ($\ddot\gamma^i=0$) without its effect.
A typical Riemannian manifold does not admit ``inertial coordinates'' and the Christoffel symbol appears.
(They can be made vanish at a single point as in exercise~\ref{ex:CS-normal}.)
We will also find an invariant form of the geodesic equation which in a sense remove the pseudoforces from the picture.

\subsection{The geodesic equation}

A solution to the geodesic equation is called a geodesic.
It follows from standard ODE theory that for any $x\in M$ and any $v\in T_xM$ there is a unique geodesic $\gamma\colon(-\eps,\eps)\to M$ so that $\gamma(0)=x$ and $\dot\gamma(0)=v$.
Existence for long times is not guaranteed unless additional structure is introduced.\footnote{If you are interested, look up geodesic completeness and the Hopf--Rinow theorem.}

\begin{ex}
\label{ex:ge-e/u}
Use this result:
\begin{quote}
If $F\colon\R^N\to\R^N$ is Lipschitz, then the ODE $u'(t)=F(u(t))$ has a unique local $C^1$ solution for any given initial conditions $u(0)=u_0\in\R^N$.
\end{quote}
Prove the local existence and uniqueness result for the geodesic equation.
\end{ex}

\begin{ex}
\label{ex:geod-smooth}
Consider the quoted ODE result of the previous exercise.
Show that if $F$ is smooth, so is $u$.
This proves that geodesics are necessarily smooth.
\end{ex}

We stress that we define a geodesic to be a solution to the geodesic equation.
(The equation will have a couple of equivalent forms.)
That geodesics actually minimize length is not entirely trivial, so we shall prove it later.

Existence of minimizers has not been established yet either.
The \AA{} theorem can be used to produce a minimizer, but often of very low regularity.
We will use smooth tools instead.

\qa

\section{Connections and covariant differentiation}

\subsection{Connections in general}

It is not always obvious what differentiation should mean.
For a function $M\to\R$ we can assign a differential as a covector (a cotangent vector).
The derivative of a function $\R\to M$ (a curve) can be treated as a vector (a tangent vector).
These behave well under changes of coordinates, and indeed these derivatives can be used to define vectors and covectors in the first place.

Differentiation of vectors does not make sense equally simply.
Consider a vector field $W(x)$.
What does it mean for $W(x)$ to stay constant as $x$ changes?
Each $W(x)$ belongs to $T_xM$, so the underlying space changes.
We need a way to compare tangent vectors on nearby tangent spaces.

The same issue arises with all kinds of bundles.
The analogue of a vector field or a tensor field on a general bundle is called a section.
A consistent method of differentiating a section of a bundle is called a connection.
A connection for vector fields is called an affine connection.

\begin{definition}
An affine connection $\nabla$ on a manifold $M$ is a bilinear map that maps a pair $(X,Y)$ of vector fields into a vector field $\nabla_XY$ so that the following conditions hold for any smooth function $f\colon M\to\R$:
\begin{itemize}
\item
$\nabla_{f X}Y=f\nabla_XY$
\item
$\nabla_X(fY)=f\nabla_XY+X(f)Y$.
\end{itemize}
\end{definition}

These conditions describe the linearity when the vector fields are multiplied by a scalar function instead of a single number.
(A reader familiar with more abstract linear algebra may enjoy the observation that vector fields constitute a module over the ring $C^\infty(M;\R)$ of smooth functions.)

One can read $\nabla_XY$ as ``the derivative of the vector field $Y$ in the direction of the vector field $X$''.
If $X,Y\colon\R^n\to\R^n$ are smooth vector fields, the standard affine connection of Euclidean geometry is given by
\begin{equation}
(\nabla_XY)^j
=
X^i Y^j_{\phantom{j},i}
\end{equation}
using the usual coordinates of $\R^n$.

\begin{ex}
\label{ex:Eucl-connection}
Show that the Euclidean connection defined above is indeed an affine connection on the space $\R^n$.
You will see the familiar Leibniz rule take a new form.
\end{ex}

\subsection{The \LC{} connection}

There are a great many connections on a smooth manifold.
The definition of a connection had nothing to do with a metric tensor.
We would of course like the concept of differentiation to be somehow compatible with the metric.

Before giving a definition of such a good connection, we need to recall the concept of a commutator.
The commutator of two linear operators $A$ and $B$ is $[A,B]\coloneqq AB-BA$.
The commutator of two differential operators of orders $k$ and $m$ is a differential operator of order $k+m-1$.
In particular, the commutator of two derivations (first order differential operators) is another derivation.

Therefore the commutator of two vector fields is a vector field.
One can define it explicitly as $[X,Y]f=X(Yf)-Y(Xf)$, where the vector fields turn scalar fields to scalar fields.

\begin{ex}
\label{ex:commutator-coordinates}
Let $X$ and $Y$ be two vector fields.
Show that their commutator is a vector field has the components
\begin{equation}
[X,Y]^i
=
X^jY^i_{\phantom{i},j}
-
Y^jX^i_{\phantom{i},j}.
\end{equation}
This shows that the commutator as differential operator has only first order terms and is therefore a vector field.
\end{ex}

\begin{definition}
An affine connection $\nabla$ on a Riemannian manifold $(M,g)$ is called a symmetric\footnote{This adjective was missing from the first version. Usually the metric condition only refers to the first property. Symmetry or lack of torsion is the second one.} metric connection if
\begin{itemize}
\item
$Xg(Y,Z)=g(\nabla_XY,Z)+g(Y,\nabla_XZ)$ and
\item
$\nabla_XY-\nabla_YX=[X,Y]$.
\end{itemize}
\end{definition}

The first condition is a Leibniz rule for the inner product; a Leibniz rule of a different nature was included in the definition of an affine connection.
The point is that although $g(Y,Z)$ contains three tensor fields (the metric tensor and the two vector fields), there are no derivatives of the metric tensor in the formula.
We will see in a moment that indeed the covariant derivative of the metric tensor is zero.

The second condition has nothing to do with the metric.
Instead, it states that something called the torsion of the connection vanishes.
The torsion measures how the tangent spaces twist as one moves from one base point to another.
A rough heuristic way to see the condition is that we want the tangent spaces to rotate but not twist.

Every Riemannian manifold has a unique symmetric metric connection\footnote{We will not prove this theorem.}, and it is called the \LC{} connection\footnote{This is named after Tullio \LC, a single person. Therefore the connection is called the \LC{} connection instead of the Levi--Civita connection.}.
The connection is defined so that for two vector fields $X(x)$ and $Y(x)$ we have
\begin{equation}
(\nabla_XY)^i
=
X^jY^i_{\phantom{i},j}
+
\Cs ijkX^jY^k.
\end{equation}
It is not apparent as we have not bothered with changing coordinates, but $\nabla_XY$ is indeed a valid vector field.

\begin{ex}
Prove that the \LC{} connection is an affine connection.
\end{ex}

\begin{ex}
Prove that the \LC{} connection is a symmetric metric connection.
\end{ex}

\subsection{Covariant differentiation}

We would like to be able to differentiate tensor fields of all kinds.
We continue to use $\nabla$ for this purpose, but in the sequel we will rarely need to differentiate very complicated tensor fields.
For any tensor field $T$ of any type $(k,l)$ and a vector field $X$, we would like to be able to compute $\nabla_XT$, the covariant derivative of $T$ in the direction of $X$.
This should all be defined so that $\nabla_XT$ is also a tensor field of type $(k,l)$ and thus behaves under coordinate changes as a tensor field should.
As $\nabla_XT$ is linear in $X$, we may regard $\nabla T$ as a tensor field of type $(k,l+1)$.

Any affine connection gives rise to such a way, as long as we require the following:
\begin{itemize}
\item
On scalar functions the covariant derivative is simply the derivative by a vector field: $\nabla_Xf=Xf$.
\item
On vector fields we have the original connection.
\item
Tensor products satisfy the Leibniz rule
\begin{equation}
\nabla_X(T\otimes R)
=
\nabla_XT\otimes R
+
T\otimes\nabla_X R.
\end{equation}
\item
The covariant derivative commutes with any contraction or trace.\footnote{We have not introduced this concept nor will we use it explicitly. This statement is here for completeness.}
\end{itemize}
The \LC{} connection has an additional property that neatly describes the metric compatibility:
\begin{equation}
\label{eq:Dg=0}
\nabla g=0.
\end{equation}
That is, the concept of differentiation is defined so that the metric tensor $g$ is ``constant''.
(A more appropriate technical term is ``parallel''.)

Recall the differential of a smooth function $f\colon M\to\R$ as a cotangent vector.
If tangent vectors are seen as derivations, then $\der f(X)=Xf$.
The covariant derivative of $f$ in the direction of a vector field $X$ was just defined so that $\nabla_Xf=Xf$.
Therefore $\der f(X)=\nabla_Xf$.
As $f$ is a tensor field of type $(0,0)$, its covariant derivative $\nabla f$ as defined above is a tensor field of type $(0,1)$ --- a covector field.
This covector field should satisfy $(\nabla f)(X)=\nabla_Xf$ for any vector field $X$, so we conclude that the covariant derivative $\nabla f$ is exactly $\der f$, the differential of $f$.

We mentioned in section~\ref{sec:music} that the gradient of a function $f$ can be defined as the vector field $(\der f)^\sharp$ corresponding to the covector field $\der f$.
The gradient vector field is usually denoted by $\nabla f$.
This is confusing with the covariant derivative, but fortunately the musical isomorphisms send the two objects denoted by $\nabla f$ to each other in a canonical way.
We shall denote the differential (and therefore the covariant derivative) of a scalar function by $\der f$, although some more consistency with other covariant derivatives would be achieved by different notation.

To get all of this on a more concrete footing, let us see how to covariantly differentiate a tensor field given in terms of components in some local coordinates.
For a vector field $Y$ we have directly the formula of the \LC{} connection:
\begin{equation}
\label{eq:cd-vf}
(\nabla_XY)^i
=
X^jY^i_{\phantom{i},j}+\Cs ijk X^jY^k.
\end{equation}

\begin{iex}
The coordinate vector fields $\partial_i$ are of course valid vector fields within their coordinate patch.
What is $\der x^i(\nabla_{\partial_j}\partial_k)$?
Describe in words what it means and give a formula.
\end{iex}

We would then like to find a similar expression for $(\nabla_X\alpha)_ i$ for a covector field $\alpha$.

\begin{ex}
Starting with the covariant derivative of a vector field and the Leibniz rule
\begin{equation}
X(\alpha(Y))
=
(\nabla_X\alpha)(Y)
+
\alpha(\nabla_XY)
\end{equation}
(which follows from the tensor product rule and the trace rule stipulated above),
show that
\begin{equation}
(\nabla_X\alpha)_ i
=
X^j\alpha_{i,j}-\Cs jik X^j\alpha_k.
\end{equation}
This is the covariant differentiation rule of covector fields.
\end{ex}

A tensor field of any type can be differentiated in a similar fashion.
For every upper index we add a term like we had for vectors and for all lower indices we add a term like for covectors.
For example, the covariant derivative of a type $(1,1)$ tensor $a$ is given by
\begin{equation}
(\nabla_Xa)^i_j
=
X^ka^i_{j,k}
+
\Cs ikl a^k_jX^l
-
\Cs klj a^i_kX^l.
\end{equation}

\begin{iex}
What is the coordinate expression for $\nabla_Xg$ for a type $(0,2)$-tensor $g$?
\end{iex}

\begin{ex}
Show directly using the formula of the previous exercise that $\nabla_Xg=0$ when $g$ is the metric tensor.
\end{ex}

\subsection{On notation}

There are various different notations in use in differential geometry.
Different conventions are convenient in different situations, and the different ways to express the same thing offer new points of view.

For example, the derivative of a scalar function $f\colon M\to\R$ in the directions of a vector field $X$ on $M$ can be written as
\begin{equation}
\nabla_Xf
=
Xf
=
\der f(X)
=
\ip{\nabla f}{X}
=
\ip{\der f}{X},
\end{equation}
where the last inner product is the duality pairing between $T_xM$ and $T_x^*M$.
And this list is not exhaustive; for example, in some cases it is convenient to denote $\der f$ by $f^*$ and call it the pushforward. 
The same object can also be expressed in local coordinates as $X^i\partial_if$ or $X^if_{,i}$.

Componentwise notations also vary somewhat.
It is customary to have all indices ``in sequence'' whether up or down, so that a gap is left where an index is in the other place.
This means writing, for example, $T^{i\phantom{j}k}_{\phantom{i}j\phantom{k}l}$ instead of $T^{ik}_{jl}$.
This only really becomes crucial when raising and lowering indices by the musical isomorphisms (which extends to tensor fields), so this convention is not always followed.

In Riemannian geometry one can naturally identify tangent vectors with cotangent vectors using the musical isomorphisms.
It is possible to leave the isomorphisms implicit and just let indices wander around freely.
However, it is instructive to keep track at least of vectors and covectors.
There are situations where a Riemannian metric is not available for music and often the natural kind of object sits most comfortably in any computation.

We have seen two types of differentiation.
The simplest kind is coordinate differentiation.
For example, the coordinate derivative of a vector field $V^i$ would be
\begin{equation}
\pDer{x^j}V^i(x)
=
\partial_j V^i
=
V^i_{\phantom{i},j}.
\end{equation}
This is an object with one index up and another down, but it is not a tensor field of type $(1,1)$ due to the issue of coordinate invariance which we have kept mysterious.

The covariant derivative of $V$ in the direction of the vector field $Y$ is $\nabla_YV$.
Its components are given by~\eqref{eq:cd-vf}.
One can write this in local coordinates as
\begin{equation}
(\nabla_YV)^i
=
Y^jV^i_{\phantom{i};j}
\end{equation}
by introducing the notation
\begin{equation}
V^i_{\phantom{i};j}
=
V^i_{\phantom{i},j}
+
\Cs ijk V^k.
\end{equation}
These are precisely the components of the $(1,1)$-type tensor field $\nabla V$.
The comma is used for coordinate differentiation and semicolon for covariant differentiation.

The Christoffel symbols are used as correction terms to make differentiation behave well.

\qa

\section{Fields along a curve}

\subsection{Vector fields along a curve}

Let $\gamma\colon I\to M$ be a smooth curve defined on an interval $I\subset\R$.
We would like to give a natural space for the velocity vector $\dot\gamma(t)$ to live in.
Each $\dot\gamma(t)$ is in $T_{\gamma(t)}M$, but this is not a vector field as previously described.
It is only defined on a subset of the manifold, namely the trace $\gamma(I)$.
And what if the curve intersects itself or even stops?

We define a vector field along the curve $\gamma$ to be a smooth map $V\colon I\to TM$ that satisfies $V(t)\in T_{\gamma(t)}M$ for all $t\in I$.
There are two important examples:
\begin{itemize}
\item
$\dot\gamma(t)$ is a vector field along $\gamma$.
\item
If $V$ is a vector field on $M$, then $V(\gamma(t))$ is a vector field along $\gamma$.
\end{itemize}
If $\dot\gamma\neq0$, then at least locally any vector field along $\gamma$ can be extended to its neighborhood and considered like the second example.
But it is best to treat objects so that they require no artificial extensions; a vector field along a curve should only exist on the curve.

It is probably worth pointing out that a vector field along a curve need not point along the curve.
It only has to be defined along the curve.

\subsection{Covariant differentiation along a curve}

In local coordinates we define the covariant derivative of $V(t)$ along $\gamma(t)$ with respect to $t$ to be
\begin{equation}
(D_tV(t))^i
=
\dot V^i(t)
+
\Cs ijkl V^j(t)\dot\gamma(l).
\end{equation}
This is a derivative with respect to the time parameter $t$, but as before, a naive coordinate derivative is invalid.

\begin{ex}
Suppose that $\gamma$ is the integral curve of a vector field $X$ on $M$.
This means that $\dot\gamma(t)=X(\gamma(t))$ for all $t$.
(We will return to integral curves in section~\ref{sec:dyn-sys}.)
Let $V$ be any vector field on $M$.
Show that\footnote{We defined covariant differentiation along a curve so that this holds. There is only one definition that makes this work.}
\begin{equation}
D_tV
=
\nabla_XV.
\end{equation}
Where does this equation make sense?
\end{ex}

The velocity of a curve $\gamma$ is $\dot\gamma$.
Its natural time derivative is $D_t\dot\gamma$, the ``covariant acceleration''.
In Euclidean geometry it makes sense to say that a curve is straight if its acceleration vanishes.
We can now do the same:
we can say that a curve is straight when $D_t\dot\gamma(t)=0$ for all $t$.

\begin{iex}
Show that a smooth curve $\gamma$ is straight if and only if it is a geodesic.
\end{iex}

We have found a familiar fact: The shortest curves are straight.
But, unlike in Euclidean geometry, a straight curve is not necessarily the shortest one between its endpoints.

We have found yet another form of the geodesic equation, this time an invariant one:
\begin{equation}
\label{eq:ge-3}
D_t\dot\gamma(t)
=
0.
\end{equation}
Compare this to the previous versions~\eqref{eq:ge-1} and~\eqref{eq:ge-2}.

The first derivative of the curve $\gamma$ is often denoted by $\dot\gamma$.
Sometimes it is good to write it as $\partial_t\gamma$ for clarity.
And as before, we can define covariant differentiation of the simplest objects to agree with the usual derivative, so that we may well write
\begin{equation}
\dot\gamma
=
\partial_t\gamma
=
D_t\gamma.
\end{equation}
This is only a matter of notation, but its benefit will come clear soon.
The geodesic equation gets yet another form:
\begin{equation}
\label{eq:ge-4}
D_t^2\gamma=0.
\end{equation}
This version is both neat and useful.
We will see it soon in section~\ref{sec:JF} when studying Jacobi fields.

The covariant derivative along a curve is also compatible with the metric as one might expect.
The following two rules establish the natural Leibniz rules for vector fields $V$ and $W$ and a scalar field $f$ along $\gamma$.
(A scalar field along a curve is simply a real-valued function defined on the interval where the curve is parametrized.)
The time derivative of a scalar $f$ could be written as $D_tf$ as well, but $\partial_tf$ highlights that we are only differentiating a number.

\begin{ex}
Show that $D_t(fV)=(\partial_tf)V+fD_tV$.
\end{ex}

\begin{ex}
\label{ex:Dt-ip}
Show that $\partial_t\ip{V}{W}=\ip{D_tV}{W}+\ip{V}{D_tW}$.
\end{ex}

\subsection{Parallel transport}

\begin{definition}
A vector field $V$ along a curve $\gamma$ is said to be parallel if $D_tV=0$.
\end{definition}

A parallel vector field is the closest we can get to a constant vector field.

Any vector at any point along a curve can be parallel transported along it.

\begin{ex}
Let $\gamma\colon I\to\R$ be a curve.
Given any $t_0\in I$ and $V_0\in T_{\gamma(t_0)}M$, show that there is a unique parallel vector field $V$ along $\gamma$ with $V(t_0)=V_0$.

This is what it means to parallel transport $V_0$ from a single tangent space along the curve.
\end{ex}

Beware that parallel transport happens along a curve, not just between two points.
Even if a curve intersects itself, parallel transport around a loop rarely preserves the vector.
But it does preserve something:

\begin{proposition}
\label{prop:par-ip}
If $V$ and $W$ are parallel vector fields along a curve $\gamma$, then their inner product $\ip{V}{W}$ is constant.
In particular, a parallel vector field has constant norm.
\end{proposition}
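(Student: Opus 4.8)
The plan is to invoke the metric compatibility of the covariant derivative along a curve, which is exactly the Leibniz rule established in exercise~\ref{ex:Dt-ip}: for any vector fields $V$ and $W$ along $\gamma$ one has $\partial_t\ip{V}{W}=\ip{D_tV}{W}+\ip{V}{D_tW}$. Since this identity does all the work, the proof is essentially a one-line substitution.

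First I would recall the definition of parallel: $V$ parallel along $\gamma$ means $D_tV=0$ for all $t$, and likewise $D_tW=0$. Substituting these into the Leibniz rule gives $\partial_t\ip{V}{W}=\ip{0}{W}+\ip{V}{0}=0$. Since $\ip{V}{W}$ is a scalar function on the parameter interval $I$ whose ordinary derivative vanishes identically, and $I$ is an interval (hence connected), $\ip{V}{W}$ is constant on $I$.

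For the last sentence of the statement, I would simply specialize to $W=V$: this shows $\abs{V}^2=\ip{V}{V}$ is constant, and since the norm is the nonnegative square root of a constant, $\abs{V}$ is itself constant along $\gamma$.

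There is no real obstacle here; the only thing to be careful about is that the substance of the claim is entirely contained in exercise~\ref{ex:Dt-ip}, which in turn rests on the metric compatibility $\nabla g=0$ of the \LC{} connection. If one wanted a self-contained argument one would instead unwind $\partial_t\ip{V}{W}$ in local coordinates as $\partial_t(g_{ij}(\gamma)V^iW^j)$, expand by the product rule, and reorganize the terms using the definition of the Christoffel symbol to recognize the $D_tV$ and $D_tW$ expressions — but since exercise~\ref{ex:Dt-ip} is already available, citing it is the cleanest route.
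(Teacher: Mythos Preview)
Your proof is correct and follows exactly the same approach as the paper: invoke the Leibniz rule of exercise~\ref{ex:Dt-ip}, substitute $D_tV=D_tW=0$, and specialize to $W=V$ for the norm statement. The paper's proof is essentially the one-line version of what you wrote.
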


\begin{proof}
As $D_tV=D_tW=0$, exercise~\ref{ex:Dt-ip} implies that $\partial_t\ip{V}{W}$.
The second claim is found by letting $V=W$.
\end{proof}

\begin{corollary}
\label{cor:geod-constant-speed}
A geodesic has constant speed.
\end{corollary}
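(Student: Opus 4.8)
The plan is to reduce the statement to Proposition~\ref{prop:par-ip} by way of the invariant form of the geodesic equation. Recall that we have defined a geodesic to be a solution of the geodesic equation, and that the exercises have established the equivalence of its various forms; in particular, $\gamma$ is a geodesic if and only if $D_t\dot\gamma=0$, which is \eqref{eq:ge-3}. So the first step is simply to read this equation as the assertion that the velocity field $V(t)\coloneqq\dot\gamma(t)$ is a vector field along $\gamma$ which is parallel in the sense of the definition preceding Proposition~\ref{prop:par-ip}.

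Once that observation is in place, the second step is to apply the ``in particular'' clause of Proposition~\ref{prop:par-ip} to the parallel field $\dot\gamma$: a parallel vector field has constant norm. Hence $t\mapsto\abs{\dot\gamma(t)}$ is constant, which is precisely the statement that the geodesic has constant speed.

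There is essentially no obstacle here; all the work has already been done in Proposition~\ref{prop:par-ip} (via exercise~\ref{ex:Dt-ip}) and in identifying \eqref{eq:ge-3} as the form of the geodesic equation to invoke. The only point requiring care is to use the right definition: we do \emph{not} assume the geodesic has constant speed (the heuristic phrasing accompanying \eqref{eq:ge-1} mentioned constant speed, but the operative definition is ``solution of the geodesic equation''), so there is no circularity. If one preferred to avoid the invariant machinery, one could instead differentiate $\ip{\dot\gamma}{\dot\gamma}=g_{ij}\dot\gamma^i\dot\gamma^j$ in local coordinates, substitute \eqref{eq:ge-1} (or \eqref{eq:ge-2}) for $\ddot\gamma^i$, and check that the terms cancel using the symmetry $g_{ij}=g_{ji}$ and relabelling of summation indices; but this is just Proposition~\ref{prop:par-ip} unwound, so the clean route is preferable.
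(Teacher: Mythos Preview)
Your proof is correct and matches the paper's intended argument exactly: the corollary is placed immediately after Proposition~\ref{prop:par-ip} precisely so that one invokes the invariant geodesic equation $D_t\dot\gamma=0$ from~\eqref{eq:ge-3}, reads it as saying $\dot\gamma$ is parallel, and then applies the ``in particular'' clause of the proposition. Your remark about non-circularity is apt and worth keeping.
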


\begin{remark}
When we did our calculus of variations to find the geodesic equation, we required that $\abs{\dot\gamma}$ is constant.
It should therefore be no surprise that a solution to the equation has constant speed.
If we are free to reparametrize as we like, geodesics will certainly not be unique anymore.
If we drop constant speed parametrization, we can describe geodesics to be those smooth curves $\gamma\colon I\to M$ for which $\dot\gamma(t)\neq0$ and $D_t\dot\gamma(t)=f(t)\dot\gamma(t)$ for some smooth function $f\colon I\to\R$.
This can be interpreted so that the acceleration of the curve must be along the curve.
This is similar to describing Euclidean geodesics as $\gamma(t)=x+h(t)v$ for a function $h$ with non-vanishing derivative; in its case $f(t)=h''(t)/h'(t)$.
\end{remark}

We have found that a minimizing curve must be a geodesic.
Now we know that geodesics are as straight as a curve on a Riemannian manifold can be and that they have constant speed\footnote{Although the length functional is parametrization independent, we did make use of constant speed parametrization to find the variation of length.}.
What we have not discovered yet is whether a geodesic is always minimizing and whether one always exists between any two points.
We will prove these statements later, but only locally as they are not generally globally true.

\subsection{Orthonormal bases}

The Riemannian metric makes each tangent space $T_xM$ into an inner product space of dimension $n$.
Therefore there is an orthonormal basis $e_1,\dots,e_n$.
As in Euclidean geometry, working within such a basis is convenient.

Now consider a smooth curve $\gamma$ on $M$.
We can take an orthonormal basis in the tangent space at any point and then parallel transport each\footnote{The index of $e_\alpha$ is not a coordinate index, so we try to reduce confusion by using a different kind of letter.} $e_\alpha$ along the curve.
This gives rise to vector fields $e_\alpha(t)$ along $\gamma$.

Such a collection of vectors is called an orthonormal parallel frame along $\gamma$.
It provides a consistent basis throughout the curve.
By proposition~\ref{prop:par-ip} the vectors $e_\alpha(t)\in T_{\gamma(t)}M$ are orthonormal for all values of $t$.

It is common to choose one of the basis vectors to be $\dot\gamma(t)$ itself.
It is indeed parallel and has unit length if $\gamma$ is a unit speed geodesic.
However, for a general curve $\dot\gamma$ is not parallel.

In a parallel frame computations appear more Euclidean.

\begin{ex}
Any vector field $V(t)$ along $\gamma$ can be expressed in the orthonormal parallel frame as
\begin{equation}
V(t)
=
\sum_{\alpha=1}^nV_\alpha(t)e_\alpha(t).
\end{equation}
Show that $V$ is parallel if and only if each $V_\alpha(t)$ is constant.
What is the norm of $V(t)$?
\end{ex}

Parallel frames exist along curves, but not on the whole manifold.
It is extremely rare that there would be even one non-zero vector field in a small open subset of the manifold which would be parallel along all curves.

\begin{ex}
Euclidean geometry is far more rigid than general Riemannian geometry.
Give an example of a non-zero vector field on $\R^n$ which is parallel transported along any curve.

Are there $n$ such vectors that could make an orthonormal frame?

Using local coordinates on any Riemannian manifold $M$ makes $U\subset M$ look Euclidean.
You can then choose a parallel field of this kind in the local coordinates.
Why is it not a parallel field defined in $U\subset M$?
\end{ex}

Given a basis of a vector space, there is a corresponding dual basis on the dual space.
The dual basis of an orthonormal parallel frame is an orthonormal parallel coframe.
The same properties of preserved inner products hold with the dual inner product on $T_x^*M$.

\subsection{The variation field of a family of geodesics}
\label{sec:pre-JF}

We used a family of curves when we studied variations of length.
Let us return to studying such a family $\Gamma(t,s)$.
Such a family appeared in proposition~\ref{prop:1st-variation}.
The proposition can be rephrased using our new tools:

\begin{quote}
Let $\Gamma\colon[0,1]\times(-\eps,\eps)\to M$ be a smooth map for which $\Gamma(0,s)=p$ and $\Gamma(1,s)=q$ for all $s$.
Denote $\gamma(t)=\Gamma(t,0)$ and $V(t)=\partial_s\Gamma(t,0)$.
Then
\begin{equation}
\label{eq:1st-variation-invariantly}
\partial_s\ell(\Gamma(\dum,s))|_{s=0}
=
-
\int_0^1
\frac1{\abs{\dot\gamma(t)}}
\ip{V}{D_t\dot\gamma}
\dd t.
\end{equation}
\end{quote}

In this form it is more transparent that the geodesic equation is $D_t\dot\gamma=0$.

\begin{ex}
\label{ex:1st-variation-sign}
Let us explain the negative sign in~\eqref{eq:1st-variation-invariantly}.
Suppose $\gamma$ is a unit speed curve in $\R^2$.
Draw a picture of a non-geodesic curve $\gamma$ in the plane and draw a nearby shorter curve with the same endpoints.
Draw the variation field $V$ and the second derivative $\ddot\gamma$ in a couple of points along the curve.
Explain the negative sign in the formula based on this example.
\end{ex}

The variation field of the family, $V(t)$, is a vector field along $\gamma$.

Every $\Gamma(\dum,s)$ is assumed to be a geodesic.
We have in fact already used the vector field $V(t)=\partial_s\Gamma(t,s)|_{s=0}$ in our variational calculations.
This is a vector field along the reference geodesic $\gamma=\Gamma(\dum,0)$.
This field describes first order variations of the curve family, and it is far simpler to study the behaviour of this variation vector field than the whole family of geodesics.

The variation field may be extended to all geodesics in the family by letting $V(t,s)=\partial_s\Gamma(t,s)$.
In fact, this is the velocity vector field of the curve $\Gamma(t,\dum)$, where now $t$ is fixed.
It is important to be able to differentiate with respect to both variables $t$ and $s$ --- also covariantly.

Of course one can study variations of any curve family, but more structure emerges when one studies a family of geodesics.
Comparison of nearby geodesics is not trivial; geodesics that start nearby can diverge and later converge and maybe even intersect.
Nothing similar can happen in Euclidean geometry.

\qa

\section{Jacobi fields}
\label{sec:JF}

\subsection{Commutators of covariant derivatives}

Consider two vector fields $X$ and $Y$ and a scalar field $f$ on $M$.
One can differentiate $f$ with $X$ and $Y$ in two different orders.
Their difference is $XYf-YXf=[X,Y]f$.
This is the commutator of two vector fields, and it is another vector field; see exercise~\ref{ex:commutator-coordinates}.

Consider then three vector fields $X,Y,Z$ on $M$.
Again, one can differentiate $Z$ covariantly with $X$ and $Y$ in the two directions.
The difference between the two orders is
\begin{equation}
[\nabla_X,\nabla_Y]Z.
\end{equation}

\begin{ex}
\label{ex:Eucl-cd-commutator}
Return to the Euclidean connection of exercise~\ref{ex:Eucl-connection}.
(This is the \LC{} connection of $\R^n$ as a Riemannian manifold.)
Show that
\begin{equation}
[\nabla_X,\nabla_Y]Z
=
\nabla_{[X,Y]}Z.
\end{equation}
This is exactly what we had for scalar fields on a general Riemannian manifold.
\end{ex}

Based on this observation we rephrase our question:
What is
\begin{equation}
[\nabla_X,\nabla_Y]Z
-\nabla_{[X,Y]}Z
?
\end{equation}

\begin{proposition}
\label{prop:cd-commutator}
There is a smooth tensor field $R$ of type\footnote{A multilinear map $T_x^*M\times T_xM\times T_xM\times T_xM\to\R$ can also be seen as a multilinear map $T_xM\times T_xM\times T_xM\to T_xM$. We take this interpretation here.} $(1,3)$ for which
\begin{equation}
R(X,Y,Z)
=
[\nabla_X,\nabla_Y]Z
-\nabla_{[X,Y]}Z.
\end{equation}
\end{proposition}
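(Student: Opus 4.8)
The plan is to establish that the map $(X,Y,Z)\mapsto[\nabla_X,\nabla_Y]Z-\nabla_{[X,Y]}Z$ is $C^\infty(M)$-linear in each of its three vector-field arguments. Granting this, the standard tensoriality criterion --- a map from vector fields to vector fields that is $C^\infty(M)$-linear in each slot takes a value at $x$ depending only on the values of the inputs at $x$ --- furnishes at each point a trilinear map $T_xM\times T_xM\times T_xM\to T_xM$, that is, a section $R$ of $TM\otimes T^*M\otimes T^*M\otimes T^*M$ as discussed in section~\ref{sec:alg-TM}. The antisymmetry $R(X,Y,Z)=-R(Y,X,Z)$ is immediate from $[X,Y]=-[Y,X]$, so it is enough to treat the first and third slots.

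Bilinearity over $\R$ is clear since $\nabla$ and the bracket are bilinear. For the first slot I would substitute $\nabla_{fX}=f\nabla_X$ and note that $\nabla_Y\nabla_{fX}Z=\nabla_Y(f\nabla_XZ)=f\nabla_Y\nabla_XZ+(Yf)\nabla_XZ$ produces an extra summand $(Yf)\nabla_XZ$; this is exactly cancelled by the bracket identity $[fX,Y]=f[X,Y]-(Yf)X$, which gives $\nabla_{[fX,Y]}Z=f\nabla_{[X,Y]}Z-(Yf)\nabla_XZ$. Hence $R(fX,Y,Z)=fR(X,Y,Z)$. For the third slot both Leibniz rules of the connection enter: expanding $\nabla_X\nabla_Y(fZ)$ and $\nabla_Y\nabla_X(fZ)$, the first-order terms $(Xf)\nabla_YZ$ and $(Yf)\nabla_XZ$ cancel in the commutator, leaving the scalar term $(X(Yf)-Y(Xf))Z=([X,Y]f)Z$, which is in turn cancelled by the matching term in $\nabla_{[X,Y]}(fZ)=f\nabla_{[X,Y]}Z+([X,Y]f)Z$. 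Hence $R(X,Y,fZ)=fR(X,Y,Z)$.

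For smoothness, and to extract the components at once, I would pass to a coordinate patch and plug in the \LC{} formula $(\nabla_XY)^i=X^jY^i_{\phantom{i},j}+\Cs ijkX^jY^k$ everywhere. Two families of derivative terms of the inputs appear: the second derivatives of $Z$ in $[\nabla_X,\nabla_Y]Z$ cancel by equality of mixed partials, and the surviving first derivatives of $X$, $Y$, $Z$ cancel against $\nabla_{[X,Y]}Z$ --- which they must, given the $C^\infty$-linearity just verified. What remains is pointwise trilinear in $X,Y,Z$, and writing $R(X,Y,Z)^i=R^i_{\phantom{i}jkl}X^jY^kZ^l$ one gets
\[
R^i_{\phantom{i}jkl}
=
\partial_j\Cs ikl-\partial_k\Cs ijl+\Cs ijm\Cs mkl-\Cs ikm\Cs mjl,
\]
a polynomial in the Christoffel symbols and their first derivatives, hence smooth. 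This both re-proves tensoriality directly and answers the exercise asking for $R^i_{\phantom{i}jkl}$.

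The crux of the proposition --- and the only real obstacle --- is precisely that a priori $[\nabla_X,\nabla_Y]Z$ carries two derivatives of $Z$ and one derivative each of $X$ and $Y$, so there is no reason it should be pointwise in the inputs; the content is that all of these derivatives drop out, the correction $-\nabla_{[X,Y]}Z$ being exactly what is needed to absorb the derivatives of $X$ and $Y$. The actual labour is the routine but slightly fiddly bookkeeping in the two $C^\infty$-linearity checks, or equivalently the index chasing in the coordinate computation above; once those cancellations are in hand, invoking the tensoriality criterion (or reading off the coordinate formula) finishes the proof.
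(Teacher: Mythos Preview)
Your proof is correct and follows exactly the approach the paper sketches: the paper's proof simply states that linearity is clear and that pointwise dependence ``can be verified by calculation,'' leaving the actual computation implicit (and relegating the coordinate formula to exercise~\ref{ex:R-components}). You have supplied precisely that calculation, verifying $C^\infty(M)$-linearity in each slot via the Leibniz rules and the bracket identity, and then extracting the components in coordinates --- so your argument is a fleshed-out version of the paper's, not a different route.
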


This tensor is often denoted as $R(X,Y)Z$ instead so that $R(X,Y)$ is seen as a linear map $T_xM\to T_xM$.
A tensor field often admits many different ways to view it.
This tensor is called the Riemann curvature tensor\footnote{Much more could be said about the meaning of curvature than is said in these notes. That would be a detour for our purposes.}.

\begin{proof}[Proof of proposition~\ref{prop:cd-commutator}]
It is clear that $R(X,Y)Z$ as given by the formula is linear in the three vector fields.
What is not trivial is that it does not depend on any derivatives but only on the values of the three vector fields at a point.
This can be verified by calculation.
\end{proof}

\begin{ex}
\label{ex:R-components}
Find a local coordinate expression for $[\nabla_X,\nabla_Y]Z-\nabla_{[X,Y]}Z$.
If the $i$th component of the vector $R(X,Y)Z$ is $R^i_{\phantom{i}jkl}X^jY^kZ^l$, find an expression for the components $R^i_{\phantom{i}jkl}$ of the Riemann curvature tensor.
Second order derivatives of the metric should appear.
You may also choose to use first order derivatives of Christoffel symbols.
\end{ex}

We will need analogous results for vector fields along curves.
First let $\Gamma\colon[0,1]\times(-\eps,\eps)\to M$ be any smooth map.
We have the natural vector fields $\partial_s\Gamma$ and $\partial_t\Gamma$ and they are well defined for any values of the two parameters.

\begin{lemma}
\label{lma:cd-commute-1}
The covariant derivatives of $\Gamma$ satisfy the commutator relationship
\begin{equation}
D_t\partial_s\Gamma
=
D_s\partial_t\Gamma.
\end{equation}
\end{lemma}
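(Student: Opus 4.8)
The plan is to verify the identity in local coordinates, exploiting the symmetry of the Christoffel symbol in its lower indices. First I would write out both sides of the claimed equation using the coordinate formula for covariant differentiation along a curve. Writing $\Gamma^i$ for the components of the map, the vector field $\partial_t\Gamma$ has components $\partial_t\Gamma^i$ and $\partial_s\Gamma$ has components $\partial_s\Gamma^i$. Applying the definition of $D_t$ to the vector field $\partial_s\Gamma$ (which is a vector field along each curve $\Gamma(\dum,s)$, with velocity $\partial_t\Gamma$), one gets
\begin{equation}
(D_t\partial_s\Gamma)^i
=
\partial_t\partial_s\Gamma^i
+
\Cs ijk(\Gamma)\,\partial_s\Gamma^j\,\partial_t\Gamma^k.
\end{equation}
By the symmetric roles of $t$ and $s$, applying $D_s$ to $\partial_t\Gamma$ gives
\begin{equation}
(D_s\partial_t\Gamma)^i
=
\partial_s\partial_t\Gamma^i
+
\Cs ijk(\Gamma)\,\partial_t\Gamma^j\,\partial_s\Gamma^k.
\end{equation}

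Now I would compare the two expressions term by term. The first terms agree because ordinary partial derivatives of the smooth function $\Gamma^i\colon[0,1]\times(-\eps,\eps)\to\R$ commute: $\partial_t\partial_s\Gamma^i=\partial_s\partial_t\Gamma^i$. The second terms agree because the Christoffel symbol is symmetric in its two lower indices, $\Cs ijk=\Cs ikj$, so that after relabelling the summation indices $j\leftrightarrow k$ the two Christoffel terms coincide. Hence the two sides are equal componentwise, which proves the lemma. Since the statement is coordinate-independent in nature (both $D_t\partial_s\Gamma$ and $D_s\partial_t\Gamma$ are vector fields along the relevant curves, defined invariantly), it suffices to check it in one coordinate patch, consistent with the standing convention of the notes.

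I do not anticipate a serious obstacle; the only point requiring a little care is making sure the definition of $D_t$ is being applied correctly — that is, recognizing $\partial_s\Gamma$ as a vector field along the $t$-curve with the appropriate velocity $\partial_t\Gamma$ feeding into the Christoffel term, and symmetrically for $D_s$. Once the bookkeeping of which variable plays the role of the curve parameter is set up, the result is an immediate consequence of the two symmetries just mentioned: equality of mixed partials of $\Gamma^i$, and symmetry of $\Cs ijk$ in $j$ and $k$.
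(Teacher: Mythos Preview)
Your proof is correct and is exactly the approach the paper intends: the lemma is left as an exercise, and the standing convention of working in a single coordinate patch together with the explicitly stated symmetry $\Cs ijk=\Cs ikj$ makes the local-coordinate verification you outline the expected argument.
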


\begin{ex}
Prove the lemma.
\end{ex}

\begin{lemma}
\label{lma:cd-commute-2}
If $V(s,t)$ is any smooth vector field depending on the two parameters so that $V(s,t)\in T_{\Gamma(s,t)}M$, then
\begin{equation}
[D_s,D_t]V
=
R(\partial_s\Gamma,\partial_t\Gamma)V.
\end{equation}
\end{lemma}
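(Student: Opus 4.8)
The plan is to prove the identity by a direct computation in local coordinates; this is the same strategy as in proposition~\ref{prop:cd-commutator}, and it is clean because the definitions of $D_s$, $D_t$ and $R$ are all compatible with working in a single chart. Write $V(s,t)=V^i(s,t)\,\partial_i$, where the $V^i$ are smooth functions of $(s,t)$ and $\partial_i$ is the coordinate vector field at $\Gamma(s,t)$, and abbreviate $P^i\coloneqq\partial_s\Gamma^i$ and $Q^i\coloneqq\partial_t\Gamma^i$. The coordinate formula for covariant differentiation along a curve gives
\begin{equation}
(D_tV)^i
=
\partial_tV^i+\Cs ijk(\Gamma)\,Q^jV^k
\qquad\text{and}\qquad
(D_sV)^i
=
\partial_sV^i+\Cs ijk(\Gamma)\,P^jV^k.
\end{equation}

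Next I would apply $D_s$ to the vector field $D_tV$ (regarded as a field along the $s$-curve), using the same formula together with the chain rule $\partial_s[\Cs ijk(\Gamma)]=(\partial_a\Cs ijk)(\Gamma)\,P^a$, and then subtract the symmetric expression for $(D_tD_sV)^i$. In the difference, the pure second derivatives $\partial_s\partial_tV^i$ cancel against $\partial_t\partial_sV^i$; the terms $\Cs ijk(\Gamma)V^k\,\partial_s\partial_t\Gamma^j$ cancel against their $s\leftrightarrow t$ counterpart; and the terms carrying a single factor $\partial_sV^k$ or $\partial_tV^k$ cancel in pairs. What survives is
\begin{equation}
([D_s,D_t]V)^i
=
(\partial_a\Cs ijk)(\Gamma)\,(P^aQ^j-Q^aP^j)\,V^k
+
\Cs ijm(\Gamma)\Cs mab(\Gamma)\,(P^jQ^a-Q^jP^a)\,V^b.
\end{equation}
Relabelling the summation indices so as to factor out $P^jQ^kV^l$, the bracket becomes precisely the coordinate components $R^i_{\phantom{i}jkl}$ of the Riemann curvature tensor found in exercise~\ref{ex:R-components} (with the index ordering dictated by $(R(X,Y)Z)^i=R^i_{\phantom{i}jkl}X^jY^kZ^l$). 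Hence the right-hand side equals $R^i_{\phantom{i}jkl}(\partial_s\Gamma)^j(\partial_t\Gamma)^kV^l=(R(\partial_s\Gamma,\partial_t\Gamma)V)^i$, which is the claim. I would also remark that no analogue of a $\nabla_{[X,Y]}Z$ correction term appears, consistent with lemma~\ref{lma:cd-commute-1}.

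The only real difficulty is bookkeeping: correctly tracking which of the many expanded terms cancel and then recognizing the survivor as the curvature, including the final relabelling of dummy indices. A slicker but less self-contained alternative, worth mentioning, is available wherever $\partial_s\Gamma$ and $\partial_t\Gamma$ are pointwise linearly independent: there one can locally extend $V$, $\partial_s\Gamma$ and $\partial_t\Gamma$ to vector fields $\widetilde V,X,Y$ on $M$ with $[X,Y]=0$, invoke proposition~\ref{prop:cd-commutator} together with the compatibility $D_t(\widetilde V\circ\Gamma)=\nabla_{\partial_t\Gamma}\widetilde V$ (and similarly in $s$), and then pass to the general, possibly degenerate, case by continuity. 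The coordinate computation avoids this case distinction entirely, so that is the route I would write up.
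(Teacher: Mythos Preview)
Your proposal is correct and follows exactly the approach the paper indicates: the paper does not spell out a proof but simply remarks that it is ``a computation similar to that of exercise~\ref{ex:R-components}'', and your local-coordinate expansion is precisely that computation, with the cancellations and the final identification of $R^i_{\phantom{i}jkl}$ carried out correctly. Your side remark about the absence of a $\nabla_{[X,Y]}Z$ term and the alternative extension argument are nice additions but not needed here.
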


The proof of this lemma is a computation similar to that of exercise~\ref{ex:R-components}.

\subsection{Jacobi fields}

As mentioned in section~\ref{sec:pre-JF}, we will study variation fields of families of geodesics.
It is important that all the curves are geodesics; otherwise there is no structure.

\begin{ex}
Show that for any vector field $V$ along a curve $\gamma$ there is a family of curves $\Gamma(\dum,s)$ so that the variation field of section~\ref{sec:pre-JF} is $V$.
Feel free to work in a single coordinate patch if it helps.\footnote{You have this liberty throughout the course.}
\end{ex}

When the family consists of geodesics, the variation field has special properties.
It will be what we shall call a Jacobi field.

\begin{ex}
\label{ex:Eucl-JF}
A Euclidean geodesic is of the form $\gamma_{x,v}(t)=x+tv$, parametrized by $x,v\in\R^n$.
Find all the possible variation fields along a Euclidean geodesic when all curves in the family are geodesics.
For any geodesic there should be a $2n$-dimensional space of such fields along it.
\end{ex}

\begin{definition}
\label{def:Rg}
The curvature operator along a geodesic $\gamma$ is a linear map $T_{\gamma(t)}M\to T_{\gamma(t)}M$ given by
\begin{equation}
\label{eq:Rg-def}
R_\gamma V
=
R(V,\dot\gamma)\dot\gamma.
\end{equation}
\end{definition}

This is in fact a $(1,1)$-tensor along the geodesic; such concepts can be defined by analogy to what we have done.

\begin{lemma}
\label{lma:Rg-normal}
We always have $\ip{\dot\gamma}{R_\gamma V}=0$.
\end{lemma}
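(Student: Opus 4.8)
The plan is to reduce the statement to the pointwise identity
\[
\ip{R(A,B)C}{C}=0
\qquad\text{for all }A,B,C\in T_xM,\ x\in M,
\]
and then specialize. Granting this, fix $t$ and apply the identity at the point $\gamma(t)$ with $A=V(t)$ and $B=C=\dot\gamma(t)$. Since the inner product is symmetric and $R_\gamma V=R(V,\dot\gamma)\dot\gamma$, we get $\ip{\dot\gamma}{R_\gamma V}=\ip{R(V,\dot\gamma)\dot\gamma}{\dot\gamma}=0$. Here I use that $R$ is the pointwise $(1,3)$ tensor of Proposition~\ref{prop:cd-commutator}, so only the values of $V$ and $\dot\gamma$ at $\gamma(t)$ enter; in particular I may freely evaluate an identity proven for honest vector fields on $M$ at the point $\gamma(t)$.

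To prove $\ip{R(A,B)C}{C}=0$, I would choose smooth vector fields $X,Y,Z$ on $M$ whose values at the given point are $A,B,C$ (the claim being pointwise and tensorial, any such extension works) and exploit metric compatibility of the \LC{} connection. Let $h\coloneqq\ip{Z}{Z}$, a scalar function. Applying $Xg(Y,Z)=g(\nabla_XY,Z)+g(Y,\nabla_XZ)$ twice, first with both vector-field slots equal to $Z$ and then differentiating again,
\[
XYh=2X\ip{\nabla_YZ}{Z}=2\ip{\nabla_X\nabla_YZ}{Z}+2\ip{\nabla_YZ}{\nabla_XZ},
\]
and likewise with $X$ and $Y$ interchanged. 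Subtracting, the terms $\ip{\nabla_YZ}{\nabla_XZ}$ and $\ip{\nabla_XZ}{\nabla_YZ}$ cancel by symmetry of the inner product, so
\[
XYh-YXh=2\ip{\nabla_X\nabla_YZ-\nabla_Y\nabla_XZ}{Z}.
\]
On the other hand, $h$ is a scalar, so $XYh-YXh=[X,Y]h$, and applying metric compatibility once more, $[X,Y]h=2\ip{\nabla_{[X,Y]}Z}{Z}$. Comparing the two expressions yields $\ip{\nabla_X\nabla_YZ-\nabla_Y\nabla_XZ-\nabla_{[X,Y]}Z}{Z}=0$, i.e. $\ip{R(X,Y)Z}{Z}=0$, as desired.

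I expect no serious obstacle: the computation is just two applications of the Leibniz rule for $g$ together with the fact that $\nabla$ restricts to ordinary differentiation on scalars. The only point needing a little care is the passage between vector fields along $\gamma$ and genuine vector fields on $M$, which is legitimate precisely because $R$ is a tensor and the inner product is evaluated fiberwise. Optionally one may remark that polarizing $\ip{R(X,Y)Z}{Z}=0$ in $Z$ gives the full skew-symmetry $\ip{R(X,Y)Z}{W}=-\ip{R(X,Y)W}{Z}$, but that refinement is not needed for this lemma.
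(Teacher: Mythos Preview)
Your proof is correct and follows essentially the same route as the paper: both rest on the antisymmetry $\ip{W}{R(X,Y)Z}=-\ip{Z}{R(X,Y)W}$ (equivalently, $\ip{R(X,Y)Z}{Z}=0$). The only difference is that the paper simply cites this symmetry property, whereas you supply a complete derivation from metric compatibility of the \LC{} connection; your remark on polarization recovers exactly the form the paper quotes.
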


\begin{proof}
This follows from a symmetry property of the Riemann curvature tensor, namely $\ip{W}{R(X,Y)Z}=-\ip{Z}{R(X,Y)W}$.
\end{proof}

\begin{lemma}
\label{lma:R-symmetric}
The curvature operator along a geodesic from definition~\ref{def:Rg} is symmetric: $\ip{V}{R_\gamma W}=\ip{R_\gamma V}{W}$.
\end{lemma}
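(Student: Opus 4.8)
The plan is to reduce the claimed symmetry $\ip{V}{R_\gamma W}=\ip{R_\gamma V}{W}$ to a known symmetry of the Riemann curvature tensor. Unwinding definition~\ref{def:Rg}, the statement reads
\begin{equation}
\ip{V}{R(W,\dot\gamma)\dot\gamma}
=
\ip{R(V,\dot\gamma)\dot\gamma}{W}.
\end{equation}
So what I need is the identity $\ip{V}{R(W,X)X}=\ip{R(V,X)X}{W}$ for an arbitrary common vector $X$ (here $X=\dot\gamma$), and this should follow from the standard algebraic symmetries of $R$ for the \LC{} connection.

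First I would record the two curvature symmetries I intend to use: the antisymmetry in the last two slots already invoked in lemma~\ref{lma:Rg-normal}, namely $\ip{W}{R(X,Y)Z}=-\ip{Z}{R(X,Y)W}$, and the pair-exchange symmetry $\ip{R(X,Y)Z}{W}=\ip{R(Z,W)X}{Y}$. (The antisymmetry $R(X,Y)Z=-R(Y,X)Z$ is immediate from the definition in proposition~\ref{prop:cd-commutator} since $[\nabla_X,\nabla_Y]$ and $[X,Y]$ both flip sign.) Then the computation is a short chain: apply pair-exchange to $\ip{R(V,\dot\gamma)\dot\gamma}{W}$ to get $\ip{R(\dot\gamma,W)V}{\dot\gamma}$, use antisymmetry in the first pair to rewrite this as $-\ip{R(W,\dot\gamma)V}{\dot\gamma}$, then use the last-pair antisymmetry to turn $-\ip{R(W,\dot\gamma)V}{\dot\gamma}$ into $\ip{R(W,\dot\gamma)\dot\gamma}{V}=\ip{V}{R(W,\dot\gamma)\dot\gamma}$, which is the left-hand side. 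I would present this as a displayed chain of equalities with the symmetry used at each step noted alongside.

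The main obstacle is that the excerpt states the symmetry in lemma~\ref{lma:Rg-normal} only in the form $\ip{W}{R(X,Y)Z}=-\ip{Z}{R(X,Y)W}$ and does not explicitly record the pair-exchange symmetry, so I should either cite the pair-exchange identity as one of the "standard symmetries of the Riemann curvature tensor" (in the same spirit as the proof of lemma~\ref{lma:Rg-normal}) or else derive the needed identity directly from the last-pair antisymmetry together with the first Bianchi identity $R(X,Y)Z+R(Y,Z)X+R(Z,X)Y=0$. If I take the Bianchi route, I would set $Y=\dot\gamma$ throughout, pair the Bianchi relation with $V$ and with $W$, and combine the two resulting equations using only the last-pair antisymmetry to isolate $\ip{V}{R(W,\dot\gamma)\dot\gamma}-\ip{R(V,\dot\gamma)\dot\gamma}{W}=0$; this is the one genuinely computational step and it is short. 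Everything else is bookkeeping with bilinearity of the inner product. I expect the final proof to be three or four lines.
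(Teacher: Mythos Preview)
Your proposal is correct and takes essentially the same approach as the paper: reduce to a standard symmetry of the Riemann curvature tensor. The paper's proof is a one-sentence citation of the identity $\ip{W}{R(X,Y)Z}=\ip{X}{R(W,Z)Y}$, which with the substitution $X=V$, $Y=Z=\dot\gamma$ gives the claim in a single step; your three-step chain through pair-exchange and the two antisymmetries reaches the same conclusion but is more work than needed.
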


\begin{proof}
This follows from a symmetry property of the Riemann curvature tensor, namely $\ip{W}{R(X,Y)Z}=\ip{X}{R(W,Z)Y}$.
\end{proof}

The operator $R_\gamma$ is symmetric, the operator $R(X,Y)$ is antisymmetric.

\begin{definition}
\label{def:JF}
Let $\gamma$ be a geodesic.
A vector field $J$ along $\gamma$ is called a Jacobi field if it satisfies the Jacobi equation
\begin{equation}
\label{eq:JE}
D_t^2J+R_\gamma J=0.
\end{equation}
\end{definition}

\begin{ex}
\label{ex:JF-unique}
Explain why a Jacobi field exists uniquely for all times, given $J$ and $D_tJ$ at one time.
\end{ex}

\begin{theorem}
\label{thm:JF=geodetic-variation}
The variation field of a family of geodesics is a Jacobi field.
Conversely, for every Jacobi field there is a family of geodesics whose variation field is the Jacobi field.
\end{theorem}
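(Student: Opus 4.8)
The plan is to prove the two directions separately: the forward direction is a direct computation using the two commutation lemmas~\ref{lma:cd-commute-1} and~\ref{lma:cd-commute-2}, while the converse builds the family by flowing suitably chosen initial data and then pins it down using uniqueness of Jacobi fields (exercise~\ref{ex:JF-unique}).

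For the forward direction, let $\Gamma\colon[0,1]\times(-\eps,\eps)\to M$ be smooth with every $\Gamma(\dum,s)$ a geodesic, so $D_t\partial_t\Gamma=0$ identically. I would differentiate this relation in $s$ to get $0=D_sD_t\partial_t\Gamma$, then push $D_s$ past the first $D_t$ using lemma~\ref{lma:cd-commute-2} (with $V=\partial_t\Gamma$), which produces the curvature term $R(\partial_s\Gamma,\partial_t\Gamma)\partial_t\Gamma$, and finally rewrite $D_s\partial_t\Gamma$ as $D_t\partial_s\Gamma$ by lemma~\ref{lma:cd-commute-1}. The outcome is
\[
0=D_t^2\partial_s\Gamma+R(\partial_s\Gamma,\partial_t\Gamma)\partial_t\Gamma .
\]
Evaluating at $s=0$ with $\gamma(t)=\Gamma(t,0)$, $V(t)=\partial_s\Gamma(t,0)$ and $\dot\gamma(t)=\partial_t\Gamma(t,0)$ gives exactly $D_t^2V+R(V,\dot\gamma)\dot\gamma=D_t^2V+R_\gamma V=0$, which is the Jacobi equation~\eqref{eq:JE}.

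For the converse, given a Jacobi field $J$ along a geodesic $\gamma\colon[0,1]\to M$, I would construct the family from perturbed initial conditions. Pick a curve $\sigma\colon(-\eps,\eps)\to M$ with $\sigma(0)=\gamma(0)$ and $\dot\sigma(0)=J(0)$, and a vector field $W$ along $\sigma$ with $W(0)=\dot\gamma(0)$ and $D_sW(0)=D_tJ(0)$ --- for instance parallel transport $\dot\gamma(0)+s\,D_tJ(0)$ along $\sigma$. Define $\Gamma(t,s)$ to be the geodesic with $\Gamma(0,s)=\sigma(s)$ and $\partial_t\Gamma(0,s)=W(s)$; by smooth dependence of geodesics on their initial data (exercise~\ref{ex:geod-smooth}) together with compactness of $[0,1]$, shrinking $\eps$ makes $\Gamma$ smooth and well defined on all of $[0,1]\times(-\eps,\eps)$. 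Since $\Gamma(\dum,0)=\gamma$, the forward direction already shows that $\tilde J(t)\coloneqq\partial_s\Gamma(t,0)$ is a Jacobi field along $\gamma$, so it only remains to match initial data. We have $\tilde J(0)=\partial_s\Gamma(0,0)=\dot\sigma(0)=J(0)$, and by lemma~\ref{lma:cd-commute-1}, $D_t\tilde J(0)=D_s\partial_t\Gamma(0,0)=D_sW(0)=D_tJ(0)$. By the uniqueness statement of exercise~\ref{ex:JF-unique}, $\tilde J=J$, which finishes the proof.

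The main obstacle is the smoothness and domain bookkeeping in the converse: one needs that geodesics depend smoothly on their starting point and velocity, and that, because the reference geodesic is defined on the whole compact interval $[0,1]$, all sufficiently nearby geodesics are too --- this is precisely what allows $\Gamma$ to live on a full rectangle rather than only near $t=0$. Beyond that, the argument is just an application of the two commutation lemmas and of Jacobi-field uniqueness, with no delicate estimates.
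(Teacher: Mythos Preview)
Your argument matches the paper's almost word for word: both halves proceed exactly as in the guided exercises following the theorem, differentiating $D_t^2\Gamma=0$ in $s$ with lemmas~\ref{lma:cd-commute-1}--\ref{lma:cd-commute-2} for the forward direction, and building $\Gamma$ from a base curve $\sigma$ and a vector field $W$ along it with the right zeroth and first covariant $s$-derivatives, then invoking exercise~\ref{ex:JF-unique}. One small slip: exercise~\ref{ex:geod-smooth} only gives smoothness in $t$; smooth dependence on initial data is the content of exercise~\ref{ex:exp-smooth} (equivalently, smoothness of the geodesic flow), which is what you actually need to make $\Gamma$ smooth on the full rectangle.
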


\begin{remark}
It is actually important for theorem~\ref{thm:JF=geodetic-variation} that a family of geodesics is a function $[0,1]\times(-\eps,\eps)\to M$, not $(0,1)\times(-\eps,\eps)\to M$.
The open intervals are harmless if the limit points still belong to the manifold, which is always true on a geodesically complete manifold.
If an endpoint is just outside the manifold, the family of geodesics might fail to exist as some of the geodesics can be forced to ``drop out''.
Feel free to assume geodesic completeness in this course when technical issues seem to arise.
\end{remark}

\begin{iex}
Prove the first half of the theorem as follows:
The fact that each $\Gamma(\dum,s)$ is a geodesic can be rewritten as $D_t^2\Gamma=0$.
Take $D_s$ of this equation and commute the derivatives using lemmas~\ref{lma:cd-commute-1} and~\ref{lma:cd-commute-2}.
Evaluate at $s=0$ to get a vector field along $\gamma=\Gamma(\dum,0)$.
\end{iex}

\begin{ex}
To prove the second half, proceed as follows:
You are given a Jacobi field $J(t)$ along a geodesic $\gamma(t)$, and you must find a family $\Gamma(t,s)$ with the correct variation field.
Let $a$ be a short curve on $M$ satisfying $a(0)=\gamma(0)$ and $\dot a(0)=J(0)$.
Argue why such an $a$ exists.
Let $b(s)$ be any vector field along $a(s)$ so that $D_sb(s)|_{s=0}=D_tJ(0)$ and $b(0)=\dot\gamma(0)$.
Argue why such a $b$ exists.
Now let $\Gamma(\dum,s)$ be the geodesic starting at $a(s)$ in the direction $b(s)$.
(Smoothness of $\Gamma$ follows from smoothness of the geodesic flow, to be established later.)
Let $V$ be the variation field of this family.
Use exercise~\ref{ex:JF-unique} to argue that $J=V$.
\end{ex}

\subsection{Parallel and normal Jacobi fields}
\label{sec:p-n-JF}

Let $\gamma$ be a geodesic throughout this subsection.
There are some special Jacobi fields, and we should understand them and the corresponding families of geodesics.

Reparametrization of geodesics produces more geodesics.
Consider the family $\Gamma(t,s)=\gamma(as+(1+bs)t)$.
The parameter $a$ describes the shift in the parametrization and $b$ describes the change in speed.
Every geodesic has constant speed, but that speed can vary with $s$.
The corresponding Jacobi field is
\begin{equation}
J(t)
=
(a+bt)\dot\gamma(t).
\end{equation}
Let us also verify using the Jacobi equation that this is indeed a Jacobi field.

It follows from lemma~\ref{lma:cd-commute-2} that $R(\dot\gamma,\lambda\dot\gamma)=0$ for any $\lambda\in\R$.
Therefore $R_\gamma\dot\gamma=0$.
The geodesic equation is $D_t\dot\gamma=0$, and so $D_t^2(a+bt)\dot\gamma(t)=0$.
Thus the Jacobi equation~\eqref{eq:JE} is satisfied.

Jacobi fields of this form are called parallel Jacobi fields.
They are somewhat uninteresting, as they reveal nothing about the behaviour of other geodesics than $\gamma$ itself.

For a general Jacobi field the inner product $\ip{\dot\gamma}{J}$ measures heuristically how much the varied geodesic gets ahead of $\gamma(t)$.
This inner product has a very rigid behaviour:

\begin{iex}
\label{ex:ip-J-g}
Let $J$ be a Jacobi field along a geodesic $\gamma$.
Show that\footnote{Using $t=0$ as the reference time is unimportant but convenient.}
\begin{equation}
\ip{\dot\gamma(t)}{J(t)}
=
\ip{\dot\gamma(0)}{J(0)}
+
t\ip{\dot\gamma(0)}{D_tJ(0)}
.
\end{equation}
The easiest way to do this is to compute the second covariant derivative of the inner product.
\end{iex}

Thus if both $J$ and $D_tJ$ are normal to $\dot\gamma$ at some point, then they both remain normal at all times.
Such Jacobi fields are called normal Jacobi fields.

The parallel component of a Jacobi field is
\begin{equation}
\begin{split}
J_p(t)
&=
\ip{\dot\gamma(t)}{J(t)}\dot\gamma(t)
\\&=
\ip{\dot\gamma(0)}{J(0)}\dot\gamma(t)
+
t\ip{\dot\gamma(0)}{D_tJ(0)}\dot\gamma(t).
\end{split}
\end{equation}
This is indeed a Jacobi field as verified above, and it is clearly parallel to $\dot\gamma$ at all times.
The normal component is
\begin{equation}
J_n(t)
=
J(t)-J_p(t).
\end{equation}
Exercise~\ref{ex:ip-J-g} shows that the Jacobi fields $J$ and $J_p$ have the same inner product against $\dot\gamma$ at all times.
Therefore $J_n(t)$ is indeed normal to $\dot\gamma$.
As the Jacobi equation is linear, $J_n$ is a Jacobi field.

It is not generally true that if a vector field satisfies an equation, then its parallel and normal components will as well.
This is a special feature of the Jacobi equation.

The parallel component of a Jacobi field describes how the parametrization of the family of geodesics varies.
The normal component describes how the geodesics as unparametrized curves or sets vary.
If a family of geodesics is reparametrized so that every geodesic has unit speed, then $\ip{\dot\gamma}{J}$ is constant.
The parameters can then be shifted to make this inner product vanish, making the corresponding Jacobi field normal.
Therefore it is often reasonable to restrict one's attention to only normal Jacobi fields, as they describe the ``true variations'' of geodesics.

\subsection{Spaces of constant curvature}
\label{sec:JF-const-curv}

Let us then take a brief look at Jacobi fields in some example spaces.

A space of constant (sectional) curvature $k$ looks locally like a Euclidean space ($k=0$), a hyperbolic space ($k<0$), or a sphere ($k>0$).
On such manifolds the curvature operator along a geodesic is given by
\begin{equation}
R_\gamma V
=
k(\abs{\dot\gamma}^2V-\ip{V}{\dot\gamma}\dot\gamma).
\end{equation}
The Jacobi equation for a normal Jacobi field along a unit speed geodesic becomes
\begin{equation}
D_t^2J+kJ=0.
\end{equation}
As $k$ is just a constant, this can be solved explicitly.

Let $e_1,\dots,e_{n-1},\dot\gamma$ be an orthonormal parallel frame along $\gamma$.
We can write our normal Jacobi field as
\begin{equation}
J(t)
=
\sum_{\alpha=1}^{n-1}J_\alpha(t)e_\alpha(t).
\end{equation}
As $D_te_\alpha=0$ and the frame is linearly independent at each point, we get the equation
\begin{equation}
J_\alpha''(t)+kJ_\alpha(t)=0.
\end{equation}
This is a constant coefficient ODE for a scalar function and can be solved explicitly:
\begin{equation}
J_\alpha(t)
=
\begin{cases}
J_\alpha(t)=a\sin(\sqrt{k}\,t)+b\cos(\sqrt{k}\,t) & \text{when } k>0, \\
J_\alpha(t)=at+b & \text{when } k=0, \\
J_\alpha(t)=ae^{\sqrt{-k}\,t}+be^{-\sqrt{-k}\,t} & \text{when } k<0.
\end{cases}
\end{equation}
The parameters $a,b\in\R$ can of course be different for different indices $\alpha$.

The flat case ($k=0$) should be familiar from exercise~\ref{ex:Eucl-JF}.
In positive curvature the Jacobi fields oscillate; consider variations of great circles on $S^2$.
\nts{picture?}
In negative curvature the behaviour is exponential; unless very carefully aimed, a Jacobi field grows exponentially when $t\to\pm\infty$.

The basic message is valid even when curvature is not constant:
In negative curvature nearby geodesics diverge, in positive curvature they converge.

\qa

\section{The exponential map}
\label{sec:exp}

In this section we will study all geodesics starting from a single point and collect all of them into a single object.

\subsection{Definitions}

If $x\in M$ and $v\in T_xM$, we denote by $\gamma_{x,v}$ the unique maximal\footnote{Defined on as long an interval as possible, containing zero.} geodesic for which $\gamma_{x,v}(0)=x$ and $\dot\gamma_{x,v}(0)=v$.
Exercise~\ref{ex:ge-e/u} provides the existence and uniqueness of such geodesics.

We would like to define the exponential map at $x$ to be $\exp_x\colon T_xM\to M$,
\begin{equation}
\exp_x(v)
=
\gamma_{x,v}(1).
\end{equation}
However, this does not necessarily make sense, as geodesics might not be defined all the way up to time $t=1$.
The definition is sensible as given if all geodesics through $x$ can be parametrized by the whole $\R$.
In other cases it needs to be defined on a subset of $T_xM$; as a small enough neighborhood of $0\in T_xM$ will be mapped nicely to points near $x$.

A calculation verifies the scaling law $\gamma_{x,\lambda v}(t)=\gamma_{x,v}(\lambda t)$ for any $\lambda\in\R$ for which everything is defined.
Therefore when $v\in T_xM$ is not zero, we can write $\exp_x(v)=\gamma_{x,v/\abs{v}}(\abs{v})$.
That is, the norm of the tangent vector gives the travel time.

As we can think of $T_xM$ as $\R^n$ upon fixing a basis, it makes sense to ask whether the exponential map is smooth.
It is.

\begin{ex}
\label{ex:exp-smooth}
Smoothness of the exponential map boils down to a general smoothness result for ODEs:
\begin{quote}
Suppose $F\colon\R^N\to\R^N$ is smooth.
Let $u(v,t)$ be defined so that $u(v,\dum)$ solves the ODE $\partial_tu(v,t)=F(u(v,t))$ and $u(v,0)=v$.
If $u$ is defined in an open set $\Omega\subset\R^N\times\R$, then $u$ is smooth in $\Omega$.
\end{quote}
Use this to prove that the exponential map is smooth where it is defined.
(Existence and uniqueness of $u$ was proven in exercise~\ref{ex:ge-e/u}.
Smoothness in time was proven in exercise~\ref{ex:geod-smooth}, but this is not enough.)
\end{ex}

There are different versions of the exponential map defined on different spaces.
The most immediate example is $\exp\colon TM\to M$ defined by $\exp(v)=\exp_x(v)$ when $v\in T_xM$.

\begin{iex}
Describe all unit speed geodesics through $x\in M$ using the exponential map.
\end{iex}

\begin{ex}
What is the exponential map of the Euclidean space $\R^n$ at a point $x\in\R^n$?
\end{ex}

\begin{ex}
On the smooth manifold $\R$ or a subset thereof a Riemannian metric is just a smooth function $g=g_{11}\colon\R\to(0,\infty)$.
The geodesic equation is $\ddot\gamma(t)+\frac12g'(\gamma(t))g^{-1}(\gamma(t))\dot\gamma(t)^2=0$.

Consider the metric $g(x)=x^{-2}$ on the manifold $M=(0,\infty)$.
What is the exponential map $\exp_1\colon T_1M\to M$?
\end{ex}

\subsection{Normal coordinates}

Let us fix $x\in M$.
We have learned that there is a neighborhood $\Omega\subset T_xM$ of the origin so that $\exp_x\colon\Omega\to M$ is well defined and smooth.
Since it can be differentiated, let us do so.

In general, the differential of a smooth map $f\colon N\to M$ at $y\in N$ is a map $\der f(y)\colon T_yN\to T_{f(y)}M$.
Using curves, it can be seen as the unique map for which any smooth curve on $N$ with $\gamma(0)=y$ satisfies $\partial_t(f(\gamma(t))|_{t=0}=\der f(y)\dot\gamma(0)$.
The curve-based definition is convenient as we may choose any curve with the correct $\dot\gamma(0)$.

\begin{ex}
Given a smooth map $f\colon\R^m\to\R^n$ and a point $y\in\R^m$, show that there exists a unique matrix $A$ for which $\partial_t(f(\gamma(t))|_{t=0}=A\dot\gamma(0)$ for any smooth curve $\gamma$ with $\gamma(0)=y$.
What is this $A$?
\end{ex}

The differential of the exponential map at the origin should be a map $\der\exp_x(0)\colon T_0(T_xM)\to T_xM$.
But as $T_xM$ is just a vector space (isometric to $\R^n$), we can naturally identify $T_0(T_xM)=T_xM$.

\begin{lemma}
\label{lma:dexp-id}
The differential $\der\exp_x(0)\colon T_xM\to T_xM$ of the exponential map is the identity map.
\end{lemma}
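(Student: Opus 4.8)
The plan is to use the curve-based description of the differential, which reduces the computation to picking a single convenient curve in $T_xM$ through the origin with prescribed velocity and watching what $\exp_x$ does to it. Fix any $v\in T_xM$, viewed also as an element of $T_0(T_xM)$ under the canonical identification $T_0(T_xM)=T_xM$. I would choose the straight-line curve $c(t)=tv$ in $T_xM$, which satisfies $c(0)=0$ and $\dot c(0)=v$. By the definition of the differential of a smooth map recalled just before the lemma,
\begin{equation}
\der\exp_x(0)v
=
\left.\partial_t\big(\exp_x(c(t))\big)\right|_{t=0}
=
\left.\partial_t\big(\exp_x(tv)\big)\right|_{t=0}.
\end{equation}

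The key step is then the scaling law for geodesics, $\gamma_{x,\lambda w}(s)=\gamma_{x,w}(\lambda s)$, established in the text just before exercise~\ref{ex:exp-smooth}. Applying it with $w=v$, $\lambda=t$, $s=1$ gives $\exp_x(tv)=\gamma_{x,tv}(1)=\gamma_{x,v}(t)$, at least for $t$ small enough that everything is defined (which is all we need for a derivative at $t=0$). Hence
\begin{equation}
\der\exp_x(0)v
=
\left.\partial_t\,\gamma_{x,v}(t)\right|_{t=0}
=
\dot\gamma_{x,v}(0)
=
v,
\end{equation}
using the defining initial condition $\dot\gamma_{x,v}(0)=v$. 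Since $v\in T_xM$ was arbitrary, $\der\exp_x(0)=\id_{T_xM}$.

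I do not expect any serious obstacle here: the only things to be careful about are (i) that $\exp_x$ is genuinely defined and smooth on a neighborhood of $0$, so that the differential makes sense — this is exactly exercise~\ref{ex:exp-smooth} / the earlier existence theory, which I may assume; and (ii) that the identification $T_0(T_xM)\cong T_xM$ is the one under which a vector $v$ corresponds to the velocity at $0$ of the line $t\mapsto tv$, which is precisely the standard identification being invoked. Everything else is the scaling law plus the initial condition defining $\gamma_{x,v}$. If one prefers to avoid even mentioning the scaling law, an alternative is to note directly that $t\mapsto\gamma_{x,v}(t)$ is by definition a curve with $\gamma_{x,v}(0)=x$ and velocity $v$ at $0$, and that $\exp_x(tv)$ equals this curve by uniqueness of geodesics with given initial data — but invoking the already-stated scaling law is the cleanest route.
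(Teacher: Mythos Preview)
Your proof is correct and follows essentially the same approach as the paper's: choose the radial line $t\mapsto tv$ in $T_xM$, apply the scaling law to identify $\exp_x(tv)=\gamma_{x,v}(t)$, and read off the derivative at $t=0$ from the initial condition $\dot\gamma_{x,v}(0)=v$. Your added remarks on the identification $T_0(T_xM)\cong T_xM$ and the smoothness prerequisite are accurate and appropriate.
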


\begin{proof}
We use the curve definition of the differential.
Let $v\in T_xM$ be any vector.
We need a curve $\gamma\colon(-\eps,\eps)\to T_xM$ with $\gamma(0)=0$ and $\dot\gamma(0)=v$.
We choose $\gamma(t)=tv$.

Then we need to know what $\sigma(t)\coloneqq\exp_x(\gamma(t))$ is, because $\der\exp_x(0)v=\dot\sigma(0)$.
Now $\sigma(t)=\exp_x(tv)=\gamma_{x,tv}(1)=\gamma_{x,v}(t)$.
That is, $\sigma$ coincides with the geodesic $\gamma_{x,v}$.
This geodesic satisfies $\dot\gamma_{x,v}(0)=v\in T_xM$, so $\dot\sigma(0)=v$.

We have thus found that $\der\exp_x(0)v=v$.
\end{proof}

The exponential map maps radial lines in $T_xM$ into geodesics of $M$.
This is not generally true of lines that do not meet the origin.

\begin{iex}
\label{ex:exp-diffeo}
Show that there is a neighborhood $\Omega\subset T_xM$ of the origin and a neighborhood $U\subset M$ of $x$ so that $\exp_x\colon\Omega\to U$ is a diffeomorphism.
\end{iex}

If the inverse of the restricted $\exp_x$ of the exercise is called $\phi\colon U\to\Omega$ and $T_xM$ is identified with $\R^n$ using an orthonormal basis, we have a diffeomorphism $\phi\colon U\to\phi(U)\subset\R^n$.
In light of remark~\ref{rmk:coordinates-after-structure} this means that $\phi$ is a coordinate chart.
These coordinates are called the geodesic normal coordinates or Gaussian normal coordinates or just normal coordinates at $x$.

\begin{ex}
Given a point $x$ on a Riemannian manifold, how unique are the normal coordinates at it?
\end{ex}

\begin{ex}
\label{ex:CS-normal}
Study the geodesic equation~\eqref{eq:ge-2} in the normal coordinates at $x$.
Consider a geodesic passing through $x$ with velocity $v\in T_xM$.
Show that $\Cs ijkv^jv^k=0$ at $x$.
Use this information to conclude that $\Cs ijk=0$ at $x$.

In terms of the pseudoforce description of Christoffel symbols, this means that the system of coordinates can be chosen to be inertial (no Christoffel symbol, no pseudoforce) at a single point.
The normal coordinates do precisely this, but the symbol cannot be typically made vanish in an open set.
\end{ex}

\subsection{Differential of the exponential map}

We saw in lemma~\ref{lma:dexp-id} that the differential of the exponential map $\exp_x$ is the identity map on $T_xM$.
But it is smooth everywhere, so what is the derivative elsewhere?

Consider $0\neq v\in T_xM$ so that $\exp_x(v)$ is defined.
We would like to differentiate $\exp_x$ at $v$ in the direction of any $w\in T_xM$.
Therefore we study $\exp_x(v+sw)$ for some parameter $s\in(-\eps,\eps)$.

This gives rise to a family of geodesics defined by $\Gamma(t,s)=\exp_x(t(v+sw))$.
The derivative of $\exp_x$ at $v$ in the direction $w$ is
\begin{equation}
\der\exp_x(v)w
=
\partial_s\exp_x(v+sw)
=
\partial_s\Gamma(1,s)|_{s=0}.
\end{equation}
Let us denote $J_w(t)=\partial_s\Gamma(t,0)$.
This is a Jacobi field along $\gamma_{x,v}$.
The derivative is the value of this Jacobi field at $t=1$.

\begin{ex}
Let us find the initial conditions of the Jacobi field.
Verify that $\Gamma(0,s)=x$ and $\partial_t\Gamma(t,s)|_{t=0}=v+sw$ for all $s$.
Find $J_w(0)$ and $D_tJ_w(0)$.
\end{ex}

We have found that $\der\exp_x(v)$ maps a vector $w$ into the value of a Jacobi field along the geodesic $\gamma_{x,v}$ at $t=1$ with initial conditions $J_w(0)=0$ and $D_tJ_w(0)=w$.
One can therefore reasonably say that Jacobi fields vanishing at $x$ are the derivative of $\exp_x$.

\begin{ex}
This description is in fact valid for $v=0$ as well --- a constant curve is a geodesic..
Use this description in terms of Jacobi fields to find the differential of the exponential map at the origin.
\end{ex}

The derivatives satisfy an orthogonality condition named after Gauss:

\begin{theorem}[The Gauss lemma]
\label{thm:Gauss-lma-1}
Take any $v,w \in T_xM$ so that $\exp_x(v)$ is defined.
Then
\begin{equation}
\ip{\der\exp_x(v)v}{\der\exp_x(v)w}
=
\ip{v}{w}.
\end{equation}
\end{theorem}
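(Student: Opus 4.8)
The plan is to deduce the Gauss lemma almost immediately from the Jacobi-field description of $\der\exp_x(v)$ established just above, combined with the rigidity of the inner product $\ip{\dot\gamma}{J}$ proved in exercise~\ref{ex:ip-J-g}. Write $\gamma\coloneqq\gamma_{x,v}$, so that $\dot\gamma(0)=v$. Given $w\in T_xM$, form the family $\Gamma(t,s)\coloneqq\exp_x(t(v+sw))$; since $\exp_x$ is defined and smooth on a neighbourhood of $v$ in $T_xM$, this is a smooth map $[0,1]\times(-\eps,\eps)\to M$ for some $\eps>0$, and each $\Gamma(\dum,s)=\gamma_{x,v+sw}$ is a geodesic. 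Its variation field $J(t)\coloneqq\partial_s\Gamma(t,0)$ is therefore a Jacobi field along $\gamma$, and as computed above it satisfies $J(0)=0$, $D_tJ(0)=w$, and $\der\exp_x(v)w=J(1)$.

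Next I would identify the radial term $\der\exp_x(v)v$. Using the scaling law $\gamma_{x,\lambda v}(t)=\gamma_{x,v}(\lambda t)$ we have $\exp_x((1+s)v)=\gamma_{x,(1+s)v}(1)=\gamma_{x,v}(1+s)$, and differentiating at $s=0$ gives $\der\exp_x(v)v=\dot\gamma(1)$. (Alternatively: the Jacobi field with initial data $0$ and $v$ is the parallel field $t\mapsto t\dot\gamma(t)$, whose value at $t=1$ is again $\dot\gamma(1)$.) Hence the left-hand side of the claimed identity equals $\ip{\dot\gamma(1)}{J(1)}$.

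Finally, apply exercise~\ref{ex:ip-J-g} to the Jacobi field $J$ along $\gamma$, with reference time $0$ and evaluation time $t=1$:
\[
\ip{\dot\gamma(1)}{J(1)}
=
\ip{\dot\gamma(0)}{J(0)}+\ip{\dot\gamma(0)}{D_tJ(0)}
=
\ip{v}{0}+\ip{v}{w}
=
\ip{v}{w}.
\]
This is exactly the Gauss lemma. Note that no decomposition of $w$ into components parallel and orthogonal to $v$ is needed, since the required linearity is already built into exercise~\ref{ex:ip-J-g}; and taking $w=v$ recovers as a by-product that $\abs{\der\exp_x(v)v}=\abs{v}$.

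I do not expect a substantial obstacle here: the real content was front-loaded into the identification of $\der\exp_x(v)$ with Jacobi fields and into exercise~\ref{ex:ip-J-g}. What needs care is purely bookkeeping: checking that $\Gamma$ is genuinely defined on a set of the form $[0,1]\times(-\eps,\eps)$ so that $J$ lives on all of $[0,1]$; the harmless identification $T_v(T_xM)\cong T_xM$ under which $\der\exp_x(v)$ is read off; and the scaling law that pins down $\der\exp_x(v)v=\dot\gamma(1)$. Once these are in place the computation above finishes the proof.
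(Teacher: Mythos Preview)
Your proof is correct and follows essentially the same approach as the paper: identify $\der\exp_x(v)v=\dot\gamma_{x,v}(1)$ (via the parallel Jacobi field $t\dot\gamma(t)$ or equivalently the scaling law), identify $\der\exp_x(v)w$ as $J(1)$ for the Jacobi field with initial data $(0,w)$, and then invoke exercise~\ref{ex:ip-J-g}. The only cosmetic difference is that the paper quotes the Jacobi-field description of $\der\exp_x(v)$ directly rather than rebuilding the family $\Gamma$.
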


Observe that the first inner product is on $T_{\exp_x(v)}M$ and the second one on $T_xM$.
Also notice that one of the two compared vectors has to be the direction of the corresponding geodesic.

\begin{proof}
The differential of the exponential is given by Jacobi fields.
We have $\der\exp_x(v)v=J_1(1)$ for the Jacobi field $J_1$ along $\gamma_{x,v}$ with the initial conditions $J_1(0)=0$ and $D_tJ_1(0)=v$.
But this Jacobi field is just $J_1(t)=t\dot\gamma_{x,v}(t)$.
(Recall that this is a Jacobi field with the correct initial condition and that solutions to the Jacobi equation are unique.)
Therefore $\der\exp_x(v)v=\dot\gamma_{x,v}(1)$.

Similarly, $\der\exp_x(v)w=J_2(1)$ for the Jacobi field $J_2$ along $\gamma_{x,v}$ with the initial conditions $J_2(0)=0$ and $D_tJ_2(0)=w$.
Exercise~\ref{ex:ip-J-g} gives
\begin{equation}
\ip{\der\exp_x(v)v}{\der\exp_x(v)w}
=
\ip{\dot\gamma_{x,v}(1)}{J_2(1)}
=
\ip{v}{J_2(0)}
+
1\ip{v}{D_tJ_2(0)}.
\end{equation}
Using the initial conditions of $J_2$ gives the claim.
\end{proof}

There is a more geometric version of the lemma, but that requires some setting up.

\begin{remark}
\label{rmk:dexp-block}
Take any non-zero $v\in T_xM$ and denote the corresponding unit vector by $\hat v=v/\abs{v}$.
We can complete $\{v\}$ into an orthonormal basis $\{e_1,e_2,\dots,e_{n-1},e_n=\hat v\}$ of $T_xM$.
When we parallel transport these vectors along $\gamma_{x,v}$, we get an orthonormal parallel frame along this geodesic.
The differential $\der\exp_x(v)$ of the exponential maps from $T_{\gamma_{x,v}(0)}M$ to $T_{\gamma_{x,v}(1)}M$.
Our frame gives a basis for both spaces.
Therefore in this frame we can write $\der\exp_x(v)$ as a matrix.
Let us write it in block form, separating the last component from the $n-1$ first ones:
\begin{equation}
\der\exp_x(v)
=
\begin{pmatrix}
A&b\\
c^T&d
\end{pmatrix}
,
\end{equation}
where $A$ is an $(n-1)\times(n-1)$ matrix, $b$ and $c$ are column vectors of dimension $n-1$, and $d\in\R$.
\end{remark}

\begin{ex}
\label{ex:dexp-block}
Use the results obtained so far to argue that
\begin{itemize}
\item
$b=0$,
\item
$d=\abs{v}$,
\item
$c=0$,
and
\item
$A$ is given by values of normal Jacobi fields along $\gamma_{x,v}$ that vanish at $t=0$.
\end{itemize}
No new proofs should be required here, just recollection and perhaps recontextualization of what has already been done.
\end{ex}

\subsection{Submanifolds}

When it comes to submanifolds, geometric intuition serves well for basics concepts and we will not need to go much beyond that.
We need to formalize a couple of concepts, but we will not attempt to build a complete theory or give all the details.

A subset $N\subset M$ is submanifold of dimension $k<n$ if near any point $x\in M$ in local coordinates it is a smooth $k$-dimensional surface in $\R^n$ in the usual sense.
A $k$-dimensional surface $\Sigma\subset\R^n$ can be defined, for example, as the image of a smooth map $\Omega\to\R^n$ from an open $\Omega\subset\R^k$ with an everywhere injective differential.
An alternative way is to require that $\Sigma$ is a level set of a function of a smooth function $\R^n\to\R^{n-k}$ with an everywhere surjective differential.
These definitions can be rephrased to work directly on manifolds as well, being careful to work locally.

An important property is that a $k$-dimensional submanifold $N\subset M$ is also a manifold in its own right.
It also inherits a Riemannian structure from the ambient space $M$.

For any $x\in N\subset M$ the tangent space of $N$ is a subspace of the tangent space of $M$.
That is, $T_xN\subset T_xM$.
There is a curve-based way to define this linear subspace: $T_xN$ consists of the velocities $\dot\gamma(0)$ of curves $\gamma\colon I\to N\subset M$ for which $\gamma(0)=x$.
That is, $T_xN$ consists of velocities of of curves staying in $N$.

A vector $v\in T_xM$ is said to be normal to a submanifold $N\subset M$ containing $x$ if $\ip{v}{w}=0$ for all $w\in T_xN$.
A basic argument in linear algebra shows that if $N$ has dimension $n-1$, then there is a unique unit normal vector to $N$ at $x$ up to sign.
One can locally define a smooth normal vector field on $N$.
We can say that a curve $\gamma$ meets $N$ orthogonally if at the intersection point $\dot\gamma$ is normal to $N$.

\subsection{Spheres}
\label{sec:sphere}

A geodesic sphere of radius $r>0$ centered at $x\in M$ is the set
\begin{equation}
\{
\exp_x(v);
v\in T_xM,
\abs{v}=r
\}.
\end{equation}
This is the image of the sphere $S(0,r)\subset T_xM$ under the exponential map.

The metric sphere of radius $r>0$ centered $x\in M$ is the set
\begin{equation}
\{
y\in M;
d(x,y)=r
\}.
\end{equation}
This is the set of points at distance $r$ from $x$.

These surfaces are closely related as we will soon see.
Notice that the geodesic sphere is the image of a smooth $(n-1)$-dimensional surface (a sphere of the tangent space) under a smooth map.
Therefore it is smooth at least when $\der\exp_x$ is bijective.
This happens at least near the origin by exercise~\ref{ex:exp-diffeo}.

\begin{theorem}[The Gauss lemma for spheres]
\label{thm:Gauss-lma-2}
Suppose that the geodesic sphere of radius $\abs{v}$ centered at $x\in M$ is a smooth submanifold near $\exp_x(v)$.
Then the geodesic $\gamma_{x,v}$ is normal to the geodesic sphere.
\end{theorem}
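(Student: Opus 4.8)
The plan is to reduce everything to the Gauss lemma (Theorem~\ref{thm:Gauss-lma-1}). Write $p=\exp_x(v)$ for the point in question and abbreviate $\gamma=\gamma_{x,v}$, so that $\gamma(1)=p$ and $\abs{\dot\gamma(1)}=\abs{v}>0$. The geodesic sphere is $\Sigma=\exp_x(S(0,\abs{v}))$ with $S(0,\abs{v})\subset T_xM$ the Euclidean sphere, and what must be shown is that $\dot\gamma(1)$ is orthogonal to $T_p\Sigma$.

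First I would recall, exactly as in the proof of Theorem~\ref{thm:Gauss-lma-1}, that $\dot\gamma(1)=\der\exp_x(v)v$: the field $J_1(t)=t\dot\gamma(t)$ is a Jacobi field along $\gamma$ with $J_1(0)=0$ and $D_tJ_1(0)=v$, so by the description of $\der\exp_x$ via Jacobi fields it represents $\der\exp_x(v)v$, and $J_1(1)=\dot\gamma(1)$. Next I would identify $T_p\Sigma$. For any $w\in T_xM$ with $\ip{v}{w}=0$, pick a smooth curve $\alpha$ in $S(0,\abs{v})$ with $\alpha(0)=v$ and $\dot\alpha(0)=w$ (for instance $\alpha(s)=\abs{v}(v+sw)/\abs{v+sw}$); then $s\mapsto\exp_x(\alpha(s))$ is a curve in $\Sigma$ through $p$, so $\der\exp_x(v)w\in T_p\Sigma$. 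Hence $\der\exp_x(v)(\{v\}^\perp)\subseteq T_p\Sigma$. This is the step that uses the hypothesis: since $\Sigma$ is a smooth $(n-1)$-dimensional submanifold near $p$, the parametrization $\exp_x|_{S(0,\abs{v})}$ is a local diffeomorphism onto $\Sigma$ near $v$ — equivalently $\der\exp_x(v)$ is nonsingular — so the inclusion is an equality of $(n-1)$-dimensional spaces, $T_p\Sigma=\der\exp_x(v)(\{v\}^\perp)$.

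Granting this, the conclusion is one line: for every $w\perp v$ in $T_xM$, Theorem~\ref{thm:Gauss-lma-1} gives
\begin{equation}
\ip{\dot\gamma(1)}{\der\exp_x(v)w}=\ip{\der\exp_x(v)v}{\der\exp_x(v)w}=\ip{v}{w}=0,
\end{equation}
and since the vectors $\der\exp_x(v)w$ with $w\perp v$ span $T_p\Sigma$, the nonzero vector $\dot\gamma(1)$ is normal to the geodesic sphere at $p$.

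The recollection of $\dot\gamma(1)=\der\exp_x(v)v$ and the displayed computation are routine. The one delicate point is the passage from ``$\Sigma$ is a smooth hypersurface near $p$'' to ``$\der\exp_x(v)$ is nonsingular'', which is what makes $\der\exp_x(v)(\{v\}^\perp)$ all of $T_p\Sigma$ rather than merely a subspace; this is precisely the statement that $p$ is not a conjugate point of $x$ along $\gamma$. I would pin down exactly what is needed using the block form of Remark~\ref{rmk:dexp-block}: Exercise~\ref{ex:dexp-block} already gives $b=c=0$ and $d=\abs{v}\neq0$, so $\der\exp_x(v)$ is invertible iff the $(n-1)\times(n-1)$ block $A$ is, and singularity of $A$ would make $v$ a critical point of $\exp_x|_{S(0,\abs{v})}$, incompatible with its image being a smooth $(n-1)$-dimensional submanifold near $p$. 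I would spell out this much and leave it there.
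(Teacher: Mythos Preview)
Your proof is correct and matches the paper's: parametrize tangent vectors to the geodesic sphere as $\der\exp_x(v)w$ with $w\perp v$ (via curves $\exp_x\circ\sigma$ on the sphere), then invoke Theorem~\ref{thm:Gauss-lma-1}. You are in fact more careful than the paper about why such vectors exhaust $T_p\Sigma$ --- the paper simply asserts that curves on the sphere are ``best described'' this way --- but be aware that your proposed justification (smooth $\Sigma$ forces the block $A$ to be invertible) is itself not automatic, as the notes later acknowledge in the remark following Exercise~\ref{ex:cp-char}; this is a known soft spot deliberately left as a digression.
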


\begin{proof}
Let us take curves staying on the geodesic sphere.
These are best described as $\alpha(t)=\exp_x(\sigma(t))$, where $\sigma\colon(-\eps,\eps)\to S(0,\abs{v})\subset T_xM$ is a smooth curve with $\sigma(0)=v$.
Since $\sigma$ stays on the sphere, we have $0=\partial_t\abs{\sigma(t)}^2=2\ip{\sigma(t)}{\dot\sigma(t)}$ and so $\dot\sigma(0)$ is orthogonal to $v$.
A tangent vector to the geodesic sphere is then $\dot\alpha(0)=\der\exp_x(v)\dot\sigma(0)$, and by theorem~\ref{thm:Gauss-lma-1} this is orthogonal to $\dot\gamma_{x,v}(1)$.
\end{proof}

\qa

\section{Minimization of length}

\subsection{Short geodesics minimize length}

We are now ready to see why geodesics minimize length.
Before stating the theorem, we will need to recall the length of a geodesic.

\begin{iex}
\label{ex:geod-length}
Show that the length of the geodesic $\gamma_{x,v}\colon[0,1]\to M$ is $\abs{v}$ whenever the geodesic is defined on the whole interval.
\end{iex}

\begin{theorem}
\label{thm:exp-d}
Let $x\in M$ and let $r>0$ be such that $\exp_x\colon B(0,r)\to U\subset M$ is a diffeomorphism.
Then for any $v\in B(0,r)\subset T_xM$ the distance between the endpoints of the corresponding geodesic is
\begin{equation}
d(x,\exp_x(v))
=
\abs{v}.
\end{equation}
In fact, $\gamma_{x,v}|_{[0,1]}$ is the unique shortest curve between its endpoints.
\end{theorem}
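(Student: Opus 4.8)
The plan is to use the Gauss lemma (Theorem~\ref{thm:Gauss-lma-2}) to show that any competing curve must be at least as long as $\abs{v}$, with equality forcing it to coincide with $\gamma_{x,v}$ up to reparametrization. First I would reduce to the case $v\neq 0$ (the case $v=0$ being trivial). Let $U=\exp_x(B(0,r))$ and let $\sigma\colon[0,1]\to M$ be any piecewise smooth curve with $\sigma(0)=x$ and $\sigma(1)=\exp_x(v)$. The key idea is to work in geodesic polar coordinates on $U\setminus\{x\}$: since $\exp_x$ is a diffeomorphism on $B(0,r)$, we can write points of $U\setminus\{x\}$ uniquely as $\exp_x(\rho\omega)$ with $\rho\in(0,r)$ and $\omega$ a unit vector in $T_xM$. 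For a curve $\sigma$ that stays in $U$, define $\rho(t)=\abs{\exp_x^{-1}(\sigma(t))}$, the radial distance function; this is the quantity we will estimate.

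Next I would decompose the velocity $\dot\sigma(t)$, at points where $\sigma(t)\neq x$, into a radial part (tangent to the geodesic $\gamma_{x,\omega(t)}$) and an angular part (tangent to the geodesic sphere through $\sigma(t)$). By the Gauss lemma for spheres (Theorem~\ref{thm:Gauss-lma-2}) these two components are orthogonal, so the Pythagorean identity gives $\abs{\dot\sigma(t)}^2=\abs{\dot\rho(t)}^2+(\text{angular part})^2\geq \abs{\dot\rho(t)}^2$, where I am using that the radial component has speed exactly $\abs{\dot\rho(t)}$ because $\exp_x$ maps radial lines to unit-speed-proportional geodesics and $\der\exp_x(v)v$ has norm $\abs{v}$ by the Gauss lemma / Exercise~\ref{ex:geod-length}. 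Integrating, $\ell(\sigma)\geq\int_0^1\abs{\dot\rho(t)}\dd t\geq\abs{\int_0^1\dot\rho(t)\dd t}=\abs{\rho(1)-\rho(0)}=\abs{v}$, since $\rho(1)=\abs{v}$ and $\rho(t)\to 0$ as $t\to 0$. Combined with Exercise~\ref{ex:geod-length}, which gives $\ell(\gamma_{x,v}|_{[0,1]})=\abs{v}$, this yields $d(x,\exp_x(v))=\abs{v}$.

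For the uniqueness statement, I would trace through the equality cases. If $\ell(\sigma)=\abs{v}$ then the angular component must vanish identically (so $\omega(t)$ is constant $=\hat v$) and $\dot\rho(t)\geq 0$ must hold with no cancellation, i.e. $\rho$ is monotone; hence $\sigma$ is a reparametrization of $\gamma_{x,v}$. I would also need to handle curves $\sigma$ that leave $U$: if $\sigma$ exits $B(0,r)$ in the $\exp_x$-picture, let $t_0$ be the first time $\rho(t)=r$ (or the first exit time from $\overline U$); the same estimate on $[0,t_0]$ already gives $\ell(\sigma)\geq r>\abs{v}$, so such curves are strictly longer and cannot be minimizers.

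The main obstacle I anticipate is the behavior at the puncture $x$: the polar coordinates and the radial/angular decomposition degenerate at $t$ with $\sigma(t)=x$, and $\rho$ need only be continuous there, not smooth. I would deal with this by restricting attention to the final subarc of $\sigma$ after its last visit to $x$ (if $\sigma$ returns to $x$ this only makes it longer, and on that subarc $\rho>0$ except at the initial instant), and by taking the lower bound $\int_\delta^1\abs{\dot\rho}\dd t$ and letting $\delta\to 0$, using $\rho(t)\to 0$. A secondary technical point is justifying that $\sigma$, if it is to be a minimizer, may be assumed to stay in $U$ and to avoid returning to $x$ — both follow from the strict inequalities just noted. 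Everything else is a routine application of the Gauss lemma plus the fundamental theorem of calculus.
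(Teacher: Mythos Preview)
Your proposal is correct and follows essentially the same strategy as the paper's proof. The paper also pulls the competing curve back to $T_xM$ via $\exp_x^{-1}$, uses the Gauss lemma to compare the radial component with the full speed, and integrates; the only cosmetic difference is that the paper works directly with the vector $\gamma(t)=\exp_x^{-1}(\sigma(t))$ and applies Cauchy--Schwarz together with Theorem~\ref{thm:Gauss-lma-1} (the inner-product form), whereas you phrase the same estimate in polar coordinates $(\rho,\omega)$ and invoke Theorem~\ref{thm:Gauss-lma-2} (the sphere form) for the orthogonal splitting. Your treatment of the puncture at $x$ and of curves leaving $U$ matches the paper's reductions.
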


\begin{proof}
The result is clear if $v=0$ so we assume $v\neq0$.
We will show that any curve from $x$ to the geodesic sphere of radius $\abs{v}$ centered at $x$ has at least length $r$.
Every curve from $x$ to $\exp_x(v)$ will have to meet this sphere.
It is enough to show that the segment of the curve until the first intersection with this sphere has at least length $r$.

We may also assume that the curve we compare to does not meet $x$ again after $t=0$.
Otherwise we could take the segment from a later intersection point to get an even shorter curve.

That is, we use a segment of the arbitrary curve and show that it has length $r$ or more, whence the original curve will have at least this length.

So, let $\gamma\colon[0,1]\to B(0,r)\subset T_xM$ be a smooth curve with $\abs{\gamma(1)}=\abs{v}$.
Then $\sigma=\exp_x\circ\gamma$ is a curve on $M$ from $x$ to the geodesic sphere of radius $\abs{v}$.
We have
\begin{equation}
\label{eq:chain-1}
\begin{split}
\abs{v}
&=
\abs{\gamma(1)}
\\&\stackrel{\text{(a)}}{=}
\int_0^1
\Der{t}\abs{\gamma(t)}
\dd t
\\&\stackrel{\text{(b)}}{=}
\int_0^1
\abs{\gamma(t)}^{-1}\ip{\gamma(t)}{\dot\gamma(t)}
\dd t
\\&\stackrel{\text{(c)}}{=}
\int_0^1
\abs{\gamma(t)}^{-1}\ip{\der\exp_x(\gamma(t))\gamma(t)}{\der\exp_x(\gamma(t))\dot\gamma(t)}
\dd t
\\&\stackrel{\text{(d)}}{\leq}
\int_0^1
\abs{\gamma(t)}^{-1}\abs{\der\exp_x(\gamma(t))\gamma(t)}\abs{\der\exp_x(\gamma(t))\dot\gamma(t)}
\dd t
\\&\stackrel{\text{(e)}}{=}
\int_0^1
\abs{\der\exp_x(\gamma(t))\dot\gamma(t)}
\dd t
\\&\stackrel{\text{(f)}}{=}
\int_0^1\abs{\dot\sigma(t)}\dd t
\\&\stackrel{\text{(g)}}{=}
\ell(\sigma).
\end{split}
\end{equation}
Justifying each step is an exercise.

By exercise~\ref{ex:geod-length} we have $\abs{v}=\ell(\gamma_{x,v}|_{[0,1]})$.
Therefore
\begin{equation}
\ell(\gamma_{x,v}|_{[0,1]})
\leq
\ell(\sigma).
\end{equation}
Thus the geodesic is indeed the shortest curve.

Let us then show that it is the unique one.
If equality holds throughout~\eqref{eq:chain-1}, the vectors
$\der\exp_x(\gamma(t))\gamma(t)$
and
$\der\exp_x(\gamma(t))\dot\gamma(t)$
must be parallel\footnote{This does not refer to parallel transport here, but to one vector being a scalar multiple of the other.} at all times.
By exercise~\ref{ex:dexp-parallel} this means that $\gamma(t)$ and $\dot\gamma(t)$ are parallel.

As we assumed that $\gamma(t)\neq0$ for $t>0$, this implies that $\gamma(t)=h(t)w$ for some increasing smooth surjection $h\colon[0,1]\to[0,1]$ and a constant vector $w\in T_xM$ with $\abs{w}=\abs{v}$.
Upon choosing constant speed parametrization --- which does not change length --- we have $\gamma(t)=tw$.

If $\sigma=\exp_x\circ\gamma$ is a shortest path from $x$ to $\exp_x(v)$, then $\sigma$ must be of the form $\sigma(t)=\exp_x(tw)$.
To get the end point right, we must have $\exp_x(w)=\exp_x(v)$.
The exponential map is diffeomorphic in the set we are in, so $w=v$.

Thus any minimizing curve between the same endpoints must indeed coincide with our geodesic up to reparamterization.
\end{proof}

\begin{ex}
Let us revisit the topological argument used in the proof.
We only wanted to work within the ball $B(0,r)$, so we argued that any curve not staying within it will have to meet the sphere.

Let $\gamma\colon[0,1]\to\R^n$ be continuous with $\gamma(0)=0$ and $\abs{\gamma(1)}>1$.
Show that $\abs{\gamma(t)}=1$ for some $t\in(0,1)$.
\end{ex}

\begin{ex}
Justify the named steps in~\eqref{eq:chain-1}.
\end{ex}

\begin{ex}
\label{ex:dexp-parallel}
Show using the Gauss lemma that
$\der\exp_x(v)v$
and
$\der\exp_x(v)w$
are parallel (so that one is a scalar multiple of the other) if and only if
$v$
and
$w$
are parallel.
\end{ex}

\begin{iex}
\label{ex:local-geodesic-short}
Show that every point $x\in M$ has a neighborhood $U$ so that for any $y\in U$ there is a unique shortest curve between $x$ and $y$ and it is a geodesic.
\end{iex}

\begin{ex}
Show that for small enough $r>0$ the metric sphere coincides with the geodesic sphere.
\end{ex}

\subsection{Conjugate points}

We now have a pretty good understanding of what happens when the exponential map is a diffeomorphism.
When we go far enough from the base point, it might stop being diffeomorphic.
We will now turn to studying that.

\begin{proposition}
\label{prop:dexp-singular-JF}
The exponential map $\exp_x\colon T_xM\to M$ has a bijective differential at $v\in T_xM\setminus0$ if and only if for any non-trivial Jacobi field $J$ along $\gamma_{x,v}$ that vanishes at $t=0$ is non-zero at $t=1$.
\end{proposition}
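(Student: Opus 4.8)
The plan is to reduce the statement to a linear-algebra fact about evaluation of Jacobi fields, using the identification of $\der\exp_x(v)$ with a Jacobi-field map established in the subsection on the differential of the exponential map. Recall from there that for $w\in T_xM$ we have $\der\exp_x(v)w=J_w(1)$, where $J_w$ is the Jacobi field along $\gamma_{x,v}$ with initial conditions $J_w(0)=0$ and $D_tJ_w(0)=w$. By exercise~\ref{ex:JF-unique} a Jacobi field along $\gamma_{x,v}$ is uniquely determined by the pair $(J(0),D_tJ(0))$ and every such pair is realized, so the assignment $w\mapsto J_w$ is a linear bijection from $T_xM$ onto the space $\mathcal J_0$ of Jacobi fields along $\gamma_{x,v}$ that vanish at $t=0$; in particular $\dim\mathcal J_0=n$.

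Next I would factor the differential as $\der\exp_x(v)=E_1\circ(w\mapsto J_w)$, where $E_1\colon\mathcal J_0\to T_{\gamma_{x,v}(1)}M$ is the evaluation map $J\mapsto J(1)$. Since $w\mapsto J_w$ is already a bijection, $\der\exp_x(v)$ is bijective if and only if $E_1$ is bijective. But $\mathcal J_0$ and $T_{\gamma_{x,v}(1)}M$ both have dimension $n$, so $E_1$ is bijective if and only if it is injective, i.e.\ if and only if $\ker E_1=\{0\}$.

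Finally I would unravel the kernel condition: $\ker E_1$ consists precisely of those Jacobi fields along $\gamma_{x,v}$ vanishing both at $t=0$ and at $t=1$. Triviality of this kernel is exactly the assertion that every Jacobi field along $\gamma_{x,v}$ vanishing at $t=0$ and at $t=1$ is the zero field, which is the contrapositive form of ``every non-trivial Jacobi field along $\gamma_{x,v}$ vanishing at $t=0$ is non-zero at $t=1$'' --- the stated condition. This closes the equivalence. There is no real analytic obstacle; the only points requiring mild care are the dimension count $\dim\mathcal J_0=n$ (where exercise~\ref{ex:JF-unique} enters) and the remark that for a linear map between spaces of equal finite dimension injectivity and bijectivity coincide.
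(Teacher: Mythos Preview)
Your proof is correct and rests on the same core identification $\der\exp_x(v)w=J_w(1)$ as the paper's argument, so the two are close in spirit. The one genuine difference is that the paper first invokes the block decomposition of remark~\ref{rmk:dexp-block} and exercise~\ref{ex:dexp-block} (which in turn rely on the Gauss lemma) to split off the tangential direction: it writes $\der\exp_x(v)$ in a parallel frame as $\begin{pmatrix}A&0\\0&d\end{pmatrix}$ with $d>0$, so that bijectivity reduces to invertibility of the $(n-1)\times(n-1)$ block $A$ acting on normal initial data. Your argument bypasses that splitting entirely, treating the full $n$-dimensional evaluation map $E_1\colon\mathcal J_0\to T_{\gamma_{x,v}(1)}M$ at once and appealing only to the dimension count and exercise~\ref{ex:JF-unique}. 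The payoff of your route is economy --- you do not need the Gauss lemma here --- while the paper's route foreshadows the later observation that only \emph{normal} Jacobi fields matter for conjugacy (the tangential Jacobi field $t\dot\gamma_{x,v}(t)$ never vanishes at $t=1$), which your argument leaves implicit.
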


\begin{proof}
In remark~\ref{rmk:dexp-block} we write the differential as a matrix using a parallel orthonormal frame along the geodesic $\gamma_{x,v}$.
In exercise~\ref{ex:dexp-block} we saw that this matrix is of the form
$
\begin{pmatrix}
A&0\\
0&d
\end{pmatrix}
$
for some $d>0$.
Therefore the linear map $\der\exp_x(v)$ is bijective if and only the matrix $A$ is invertible.

The matrix $A$ was defined so that if a Jacobi field $J$ along the geodesic satisfies $J(0)=0$ and $D_tJ(0)=w$, then $J(1)=Aw$.
Notice that $D_tJ(0)\in T_xM$ and $J(1)\in T_{\exp_x(v)}M$, but the parallel frame gives a way to identify these two vector spaces.
The matrix $A$ only fails to be invertible when there is $w\neq 0$ so that $Aw=0$.
This is equivalent with the existence of a Jacobi field $J$ for which $J(0)=0$, $D_tJ(0)\neq0$, and $J(1)=0$.

By exercise~\ref{ex:JF-unique} a Jacobi field $J$ is uniquely determined by $J(0)$ and $D_tJ(0)$.
If we require $J(0)=0$, then the Jacobi field is non-trivial if and only if $D_tJ(0)\neq0$.
\end{proof}

\begin{ex}
Show that if a non-trivial Jacobi field vanishes at two different points, then it is normal.
\end{ex}

Proposition~\ref{prop:dexp-singular-JF} inspires us to give a name for the case when a non-trivial Jacobi field vanishes at two points.

\begin{definition}
Let $\gamma\colon I\to M$ be a geodesic and $a,b\in I$.
We say that the points $\gamma(a)$ and $\gamma(b)$ are conjugate along $\gamma$ if there is a non-trivial Jacobi field along $\gamma$ that vanishes at both $a$ and $b$.
\end{definition}

Just like parallel transport, conjugate points are a concept along a geodesic, not between a pair of points.

\begin{ex}
\label{ex:cp-char}
Let $\gamma\colon I\to M$ be a geodesic with non-zero speed and $a,b\in I$.
Show that the following are equivalent:
\begin{itemize}
\item
The points $\gamma(a)$ and $\gamma(b)$ are not conjugate along $\gamma$.
\item
The differential $\der\exp_{\gamma(a)}((b-a)\dot\gamma(a))$ is a bijection.
\item
If a Jacobi field $J$ along $\gamma$ vanishes at both $a$ and $b$, it is identically zero.
\end{itemize}
The last point can be understood as a Jacobi field being uniquely determined by its values at two non-conjugate points.
If the two points are conjugate, setting these two values is (somewhat) redundant.
\end{ex}

\begin{remark}
Yet another equivalent condition is that the geodesic sphere is smooth at that point.
This is very plausible, but it is possible for a smooth map with a non-invertible differential to map a smooth manifold into a smooth manifold.
For the exponential map this cannot happen, but studying the details would be a digression.
\end{remark}

\begin{ex}
Give an example of a map $f\colon\R^2\to\R^3$ for which $f(\R^2)$ is a smooth surface and the derivative matrix of $f$ is invertible almost everywhere but not everywhere.
\end{ex}

\subsection{Second variation of length}

The way we first found the geodesic equation was to study variations of the length of a curve.
We essentially defined geodesics to be critical points of the length functional --- with constant speed.

In general there is no guarantee that a critical point is a local minimum.
We just showed that short enough geodesics are globally minimal.
To study minimality locally, we need to calculate second derivative and see whether it is positive definite.

The second variation is most interesting when the reference curve is a geodesic, a critical point.
This will also simplify matters considerably.

We will consider again a family of curves $\Gamma(t,s)$.
We now assume that $\Gamma(\dum,0)$ is a geodesic and we assume that each $\Gamma(\dum,s)$ has constant speed.

\begin{proposition}
\label{prop:2nd-variation}
Let $\Gamma\colon[0,1]\times(-\eps,\eps)\to M$ be a smooth map so that
\begin{itemize}
\item
$\abs{\partial_t\Gamma(t,s)}=c_s$, a constant depending on $s$ but not $t$,
\item
$\Gamma(0,s)=p$ for all $s$, and
\item
$\Gamma(1,s)=q$ for all $s$.
\end{itemize}
Denoting $\gamma(t)=\Gamma(t,0)$ and $V(t)=\partial_s\Gamma(t,s)|_{s=0}$, we have
\begin{equation}
\partial_s^2\ell(\Gamma(\dum,s))|_{s=0}
=
\frac1{\ell(\gamma)}
\int_0^1
\left(
\abs{D_tV}^2
-
\ip{V}{R_\gamma V}
\right)
\dd t.
\end{equation}
\end{proposition}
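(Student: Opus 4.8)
The plan is to compute $\partial_s^2\ell(\Gamma(\dum,s))$ by differentiating the length integrand twice in $s$ and then evaluating at $s=0$, exploiting the geodesic equation $D_t\dot\gamma=0$ to kill all the first-order terms that survived in Proposition~\ref{prop:1st-variation}. I would work invariantly with covariant derivatives rather than Christoffel symbols. Write $L(s)=\ell(\Gamma(\dum,s))=\int_0^1\abs{\partial_t\Gamma}\dd t$ and abbreviate $T=\partial_t\Gamma$, $S=\partial_s\Gamma$. First I would recall from the first-variation computation that
\begin{equation}
\partial_s\abs{T}
=
\frac{1}{\abs{T}}\ip{D_sT}{T}
=
\frac{1}{\abs{T}}\ip{D_tS}{T},
\end{equation}
using lemma~\ref{lma:cd-commute-1} to swap $D_sT=D_tS$. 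Then I would differentiate once more in $s$:
\begin{equation}
\partial_s^2\abs{T}
=
-\frac{1}{\abs{T}^2}(\partial_s\abs{T})\ip{D_tS}{T}
+
\frac{1}{\abs{T}}\left(\ip{D_sD_tS}{T}+\ip{D_tS}{D_sT}\right).
\end{equation}

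Now I would specialize to $s=0$. By hypothesis each $\Gamma(\dum,s)$ has constant speed $c_s$, and at $s=0$, $\abs{T}=\abs{\dot\gamma}=\ell(\gamma)=c_0$ is constant in $t$; crucially, $\abs{\dot\gamma}$ is \emph{constant}, so $1/\abs{T}$ pulls out of the integral. The decisive simplification is that the term $\frac1{\abs{T}}\ip{D_sD_tS}{T}$ must be rewritten: using lemma~\ref{lma:cd-commute-2}, $D_sD_tS = D_tD_sS + R(\partial_s\Gamma,\partial_t\Gamma)S$, and at $s=0$ with $S=V$, $T=\dot\gamma$ this is $D_tD_sS + R(V,\dot\gamma)V$. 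Then I integrate the $\ip{D_tD_sS}{\dot\gamma}$ term by parts in $t$; since $\Gamma(0,s)=p$ and $\Gamma(1,s)=q$ are fixed, $S$ vanishes at the endpoints for all $s$, hence $D_sS$ does too, so the boundary terms vanish and we pick up $-\ip{D_sS}{D_t\dot\gamma}=0$ by the geodesic equation. The other $s$-derivative-of-speed term at $s=0$ likewise involves $\ip{D_tS}{T}=\ip{D_tV}{\dot\gamma}$, and I would argue this integrates against itself to something that either vanishes or cancels — more precisely, $(\partial_s\abs{T})|_{s=0}$ need not vanish pointwise, so I would instead track the $\ip{D_tS}{D_sT}$ term, which at $s=0$ becomes $\ip{D_tV}{D_s\partial_t\Gamma}|_{s=0}=\ip{D_tV}{D_tV}=\abs{D_tV}^2$ after another application of lemma~\ref{lma:cd-commute-1}.

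Assembling: at $s=0$,
\begin{equation}
\partial_s^2\abs{T}\big|_{s=0}
=
\frac{1}{\ell(\gamma)}
\left(\abs{D_tV}^2 - \ip{V}{R_\gamma V} - \ip{D_tD_sS}{\dot\gamma}\big|_{\text{total derivative}}\right)
+ (\text{terms that integrate to }0),
\end{equation}
where $R_\gamma V = R(V,\dot\gamma)\dot\gamma$ appears because lemma~\ref{lma:Rg-normal} gives $\ip{R(V,\dot\gamma)V}{\dot\gamma} = -\ip{R(V,\dot\gamma)\dot\gamma}{V} = -\ip{V}{R_\gamma V}$, using the antisymmetry $\ip{W}{R(X,Y)Z}=-\ip{Z}{R(X,Y)W}$. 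Integrating over $t\in[0,1]$ and discarding the exact-derivative contributions (which vanish by the endpoint conditions) and the first-variation-type contributions (which vanish since $\gamma$ is a geodesic and the relevant $t$-integral of a total derivative vanishes) yields exactly
\begin{equation}
\partial_s^2\ell(\Gamma(\dum,s))\big|_{s=0}
=
\frac{1}{\ell(\gamma)}
\int_0^1\left(\abs{D_tV}^2 - \ip{V}{R_\gamma V}\right)\dd t.
\end{equation}

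The main obstacle I anticipate is bookkeeping the terms proportional to $\partial_s\abs{T}$ and $\ip{D_tS}{T}$ at $s=0$: since $\gamma$ has constant speed but the variation $V$ need not be normal, $\ip{D_tV}{\dot\gamma}$ is generally nonzero pointwise, and I must verify carefully that when these terms are combined with the $-\abs{T}^{-2}(\partial_s\abs{T})\ip{D_tS}{T}$ factor and integrated, everything organizes into a total $t$-derivative (whose integral vanishes by $V(0)=V(1)=0$) plus the clean expression above. This is precisely the place where the constant-speed hypothesis on \emph{every} $\Gamma(\dum,s)$, not just the reference curve, does real work — without it the $\partial_s^2$ of the speed would leave an irreducible remainder. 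I would handle it by writing $\abs{\partial_t\Gamma(t,s)}=c_s$ identically, differentiating this relation in $s$ to get identities among the inner products, and substituting those in.
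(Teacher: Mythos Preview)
Your approach is correct in outline but differs from the paper's, and the difference is worth noting. You differentiate the integrand $\abs{\partial_t\Gamma}$ twice in $s$ and then integrate; the paper instead differentiates the \emph{already-integrated} first variation formula $\ell'(s)=-\int_0^1\ell(s)^{-1}\ip{\partial_s\Gamma}{D_t\partial_t\Gamma}\dd t$ (valid for all $s$ by the constant-speed hypothesis) once more in $s$. In the paper's route the geodesic equation $D_t\partial_t\Gamma|_{s=0}=0$ kills the $\ip{D_s\partial_s\Gamma}{D_t\partial_t\Gamma}$ term instantly, the commutator identity $D_sD_t\partial_t\Gamma|_{s=0}=D_t^2V+R_\gamma V$ drops out of lemmas~\ref{lma:cd-commute-1}--\ref{lma:cd-commute-2}, and a single integration by parts on $\ip{V}{D_t^2V}$ finishes the job. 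No term of the form $\ip{D_tV}{\dot\gamma}^2$ ever appears.

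Your route generates exactly that extra term, $-\ell(\gamma)^{-3}\ip{D_tV}{\dot\gamma}^2$, and your proposal is honest that this is the delicate point but vague about its fate. The resolution is cleaner than you suggest: from $\abs{T}(t,s)=c_s$ one gets $\partial_s\abs{T}=c_s'$ independent of $t$, so at $s=0$ the quantity $\ip{D_tV}{\dot\gamma}=\ell(\gamma)\,c_0'$ is \emph{constant} in $t$. But $\ip{D_tV}{\dot\gamma}=\partial_t\ip{V}{\dot\gamma}$ (using $D_t\dot\gamma=0$), and since $V$ vanishes at both endpoints this constant integrates to zero, hence is zero identically. So the troublesome term vanishes pointwise, not merely as a total derivative. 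This is in fact the content of the exercise immediately following the proposition (that $V$ is normal). With that observation your computation closes correctly; the paper's approach simply sidesteps the issue.
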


Here $R_\gamma$ is the curvature operator along $\gamma$ from definition~\ref{def:Rg}.
Notice that as $\dot\gamma\neq0$, we have $\partial_t\Gamma(t,s)\neq0$ everywhere if $\eps>0$ is small enough --- therefore constant speed parametrization is legitimate.

\begin{proof}
Proposition~\ref{prop:1st-variation} (the first variation) was phrased and proven in local coordinates.
Now we will do things invariantly.

Let us denote $\ell(\Gamma(\dum,s))=\ell(s)$.
First we observe that since each $\Gamma(\dum,s)$ has constant speed and is defined on $[0,1]$, we have $\ell(s)=c_s$.
In fact, as $\gamma$ is a geodesic, $\ell'(0)=0$.

To get started, we use the reformulation~\eqref{eq:1st-variation-invariantly} of the first variation formula.
Now that the constant speed condition is satisfied for all $s$, the formula is valid for all $s$.
We have
\begin{equation}
\ell'(s)
=
-\int_0^1
\frac1{\ell(s)}
\ip{\partial_s\Gamma}{D_t\partial_t\Gamma}
\dd t.
\end{equation}
We can now simply differentiate under the integral sign and evaluate at $s=0$ to get
\begin{equation}
\ell''(0)
=
-
\frac1{\ell(\gamma)}
\int_0^1
\partial_s\ip{\partial_s\Gamma}{D_t\partial_t\Gamma}|_{s=0}
\dd t.
\end{equation}
The derivatives $\partial_s$ and $D_s$ are derivatives along the curves $\Gamma(t,\dum)$ for fixed $t$.

Using exercise~\ref{ex:Dt-ip} we get
\begin{equation}
\partial_s\ip{\partial_s\Gamma}{D_t\partial_t\Gamma}|_{s=0}
=
\ip{D_s\partial_s\Gamma}{D_t\partial_t\Gamma}|_{s=0}
+
\ip{\partial_s\Gamma}{D_sD_t\partial_t\Gamma}|_{s=0}.
\end{equation}
The first term vanishes because $D_t\partial_t\Gamma(t,0)=0$ --- after all, $\gamma$ is a geodesic.
Exercise~\ref{ex:commute-1} gives that
\begin{equation}
D_sD_t\partial_t\Gamma|_{s=0}
=
D_t^2V+R_\gamma V.
\end{equation}
With these ingredients we can simplify our second derivative to
\begin{equation}
\ell''(0)
=
-
\frac1{\ell(\gamma)}
\int_0^1
\ip{V}{D_t^2V+R_\gamma V}
\dd t.
\end{equation}
Integration by parts in the first term gives the claim since $V$ vanishes at the endpoints.
(See exercise~\ref{ex:ibp-vf} for details on integration by parts.)
\end{proof}

We will study this formula in more detail in section~\ref{sec:if}.

\begin{iex}
\label{ex:commute-1}
Commute the derivatives to prove that
\begin{equation}
D_sD_t\partial_t\Gamma
=
D_t^2\partial_s\Gamma
+
R(\partial_s\Gamma,\partial_t\Gamma)\partial_t\Gamma.
\end{equation}
At $s=0$ this becomes $D_t^2V+R_\gamma V$.
\end{iex}

\begin{ex}
\label{ex:ibp-vf}
Let us justify integration by parts of vector fields.
Let $V$ and $W$ be two vector fields along a geodesic $\gamma\colon[a,b]\to M$.
Show that
\begin{equation}
\int_a^b
\ip{V}{D_tW}
\dd t
=
\ip{V(b)}{W(b)}
-
\ip{V(a)}{W(a)}
-\int_a^b
\ip{D_tV}{W}
\dd t.
\end{equation}
It may help to recall how the integration by parts formula for functions on the real line is proven.
\end{ex}

\begin{ex}
Show that it follows from the assumptions of proposition~\ref{prop:2nd-variation} that the variation field is normal to the geodesic $\gamma$ at all times.
It can help to show first that $2\partial_t\ip{\partial_t\Gamma}{\partial_s\Gamma}=\partial_s\ip{\partial_t\Gamma}{\partial_t\Gamma}$ at $s=0$ and to recall that $\ell'(0)=0$.
\end{ex}

As was mentioned in section~\ref{sec:p-n-JF}, only the normal component of the variation field is geometrically meaningful.
The parallel component corresponds to reparametrization.

\qa

\section{The index form}
\label{sec:if}

\subsection{Second variation of length}

Let us denote by $\NVF(\gamma)$ the space of normal vector fields along a geodesic $\gamma\colon[a,b]\to M$.
Let $\NVF_0(\gamma)\subset\NVF(\gamma)$ be the subspace of vector fields vanishing at the endpoints.
The space $\NVF_0(\gamma)$ describes proper first order variations of a geodesic $\gamma$ with fixed endpoints.
Since the first order variation of the length vanishes, the second order variation of length only depends on the first order variation of the curve itself.

We found a formula for the second variation of length in proposition~\ref{prop:2nd-variation}.
Inspired by that, we give a name to the gadget we found.

\begin{definition}
Let $\gamma\colon[a,b]\to M$ be a geodesic.
The index form $\IF=\IF_\gamma$ of $\gamma$ is a quadratic form on $\NVF(\gamma)$ defined by
\begin{equation}
\IF(V,W)
=
\int_a^b
\left(
\ip{D_tV}{D_tW}
-
\ip{V}{R_\gamma W}
\right)
\dd t.
\end{equation}
\end{definition}

It follows from lemma~\ref{lma:R-symmetric} that the index form is symmetric.

\begin{definition}
Let $E$ be a real vector space and $Q\colon E\times E\to\R$ a quadratic form\footnote{That, is $Q$ is a symmetric element of $E^*\otimes E^*$.}.
We say that
\begin{itemize}
\item
$Q$ is positive definite if $Q(v,v)>0$ for all $v\in E\setminus0$.
\item
$Q$ is positive semidefinite if $Q(v,v)\geq0$ for all $v\in E$.
\item
$Q$ is negative (semi)definite if $-Q$ is positive (semi)definite.
\item
$Q$ is indefinite if $Q(v,v)>0$ and $Q(w,w)<0$ for some $v,w\in E$.
\end{itemize}
\end{definition}

\begin{ex}
Let $\gamma\colon[a,b]\to M$ be a unit speed geodesic.
Show that the second variation of its length corresponding to a family of curves with a normal variation field $V\in\NVF(\gamma)$ is $\IF(V,V)$.
You only need to rescale proposition~\ref{prop:2nd-variation} to unit speed and a general interval.
\end{ex}

One should therefore think of the index form as the Hessian of the length functional.
Any geodesic can be made longer by adding wiggles, so the index form cannot be negative definite or semidefinite.
All other options are possible as we will see.

\begin{ex}
\label{ex:indef-not-minimal}
Show that if $\IF_\gamma$ is not positive semidefinite on $\NVF_0(\gamma)$, then $\gamma$ is not the shortest curve between its endpoints.
This together with theorem~\ref{thm:exp-d} implies that for any $x$ and $v\in T_xM$ there is $\delta>0$ so that $\IF_{\gamma_{x,v}|_{[0,\eps]}}$ is positive semidefinite on $\NVF_0(\gamma)$.
\end{ex}

A local minimum need not be a global one.
Even if the index form is positive definite, the geodesic can fail to be minimizing.
There can be a curve taking an entirely different route between the two endpoints.
No amount of local analysis along a curve can rule this out.

\subsection{Jacobi fields, conjugate points, and definiteness}

Integration by parts (exercise~\ref{ex:ibp-vf}) reveals a connection between the index form and Jacobi fields.

\begin{iex}
\label{ex:IF-JF}
Let $V\in\NVF(\gamma)$.
Show that the following are equivalent:
\begin{enumerate}
\item
$V$ is a Jacobi field.
\item
$\IF(V,W)=0$ for all $W\in\NVF_0(\gamma)$.
\end{enumerate}
Why is it important that $W$ vanishes at the endpoints?
\end{iex}

\begin{remark}
\label{rmk:IF-JF}
Exercise~\ref{ex:IF-JF} has an interesting implication if the endpoints of the geodesic are conjugate.
Then there is a Jacobi field $J\in\NVF_0(\gamma)\setminus0$, and by the exercise $\IF(J,J)=0$.
Therefore positive definiteness is impossible in this case.
This connection between conjugate points and the definiteness of the index form goes much further as we will see next.
\end{remark}

\begin{lemma}
\label{lma:IF<0}
Let $\gamma\colon[a,b]\to M$ be a geodesic.
If there are conjugate points $\gamma(a')$ and $\gamma(b')$ along $\gamma$ so that $0<b'-a'<b-a$, then there is $V\in\NVF_0(\gamma)$ so that $\IF(V,V)<0$.
\end{lemma}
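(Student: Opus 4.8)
The plan is to manufacture a \emph{broken Jacobi field} out of the conjugating Jacobi field, check that its index form vanishes, and then round off its corner to push the index form strictly below zero.

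First I would choose a non-trivial Jacobi field $J$ along $\gamma|_{[a',b']}$ with $J(a')=J(b')=0$; such a field exists precisely because $\gamma(a')$ and $\gamma(b')$ are conjugate. A non-trivial Jacobi field vanishing at two points is normal, so $D_tJ$ is normal too (differentiate $\ip{\dot\gamma}{J}\equiv0$ and use the geodesic equation $D_t\dot\gamma=0$). Extend $J$ by zero on $[a,a']$ and $[b',b]$; the result $\tilde J$ is a continuous, piecewise smooth normal vector field along $\gamma$ vanishing at $a$ and $b$, smooth except possibly at $a'$ and $b'$, and on each smooth piece it is either zero or a solution of the Jacobi equation. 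Integrating by parts on $[a',b']$ via exercise~\ref{ex:ibp-vf}, using $D_t^2J=-R_\gamma J$ and $J(a')=J(b')=0$, gives $\IF(\tilde J,\tilde J)=0$; the segments $[a,a']$ and $[b',b]$ contribute nothing since $\tilde J\equiv0$ there.

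Next comes the perturbation. The hypothesis $0<b'-a'<b-a$ forces at least one of $a',b'$ to lie in the open interval $(a,b)$; call such an endpoint $t_*$ and set $w\coloneqq D_tJ(t_*)$ computed from the $[a',b']$ side. This $w$ is a non-zero normal vector: normal by the previous paragraph, and non-zero since otherwise $J(t_*)=0$ and $D_tJ(t_*)=0$ would force $J\equiv0$ by uniqueness of Jacobi fields (exercise~\ref{ex:JF-unique}). Pick a smooth $W\in\NVF_0(\gamma)$ supported in a tiny neighbourhood of $t_*$ inside $(a,b)$ with $W(t_*)=w$ (parallel transport $w$ and multiply by a bump function equal to $1$ at $t_*$). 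Integrating by parts piece by piece as before, every curvature term cancels and only the boundary term at the corner $t_*$ survives, giving $\IF(\tilde J,W)=\pm\abs{w}^2\neq0$, the sign depending on whether $t_*$ equals $a'$ or $b'$. Since $\IF$ is a symmetric bilinear form, $\IF(\tilde J+\sigma W,\tilde J+\sigma W)=2\sigma\,\IF(\tilde J,W)+\sigma^2\,\IF(W,W)$, which is strictly negative for a suitably chosen small $\sigma\in\R$. This yields a piecewise smooth $V_0\in\NVF_0(\gamma)$ with $\IF(V_0,V_0)<0$.

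Finally I would smooth out $V_0$: since $D_tV_0$ is bounded and piecewise continuous and the integrand of $\IF$ is therefore bounded, modifying $V_0$ only on intervals of length $\delta$ around the (at most two) points of $(a,b)$ where $\tilde J$ fails to be smooth changes $\IF(V_0,V_0)$ by $O(\delta)$, so for small $\delta$ we obtain a genuinely smooth $V\in\NVF_0(\gamma)$ still satisfying $\IF(V,V)<0$. I expect the main obstacle to be the bookkeeping in the integration by parts with corners --- pinning down exactly how the jump of $D_t\tilde J$ at $t_*$ produces the boundary term $\ip{w}{W(t_*)}$ and getting its sign right --- together with the final mollification; the rest is routine use of the Jacobi equation and exercise~\ref{ex:ibp-vf}.
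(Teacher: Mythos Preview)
Your proof is correct and follows the classical textbook route (as in do~Carmo or Lee), which differs in execution from the paper's argument, though both begin with the same broken Jacobi field $\tilde J=\bar J$.

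The paper proceeds constructively: it builds explicit quadratic ``corner-rounding'' fields $Z$ and $H$ near $a'$ and $b'$, tunes a constant so that $V=\bar J+Z+H$ is $C^1$, and then computes $\IF(V,V)$ term by term as an asymptotic expansion in a small parameter~$\eps$, verifying that the leading-order contribution is a strictly negative multiple of~$\eps$. You instead exploit the bilinear structure: having shown $\IF(\tilde J,\tilde J)=0$, you produce a single smooth test field $W$ with $\IF(\tilde J,W)\neq0$ via the boundary term in the piecewise integration by parts, and then the quadratic $\sigma\mapsto\IF(\tilde J+\sigma W,\tilde J+\sigma W)$ automatically dips below zero for small $\sigma$ of the right sign. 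Your route is shorter and avoids the $\eps$-bookkeeping; the paper's route is more hands-on and yields an explicit near-$C^1$ variation witnessing the negativity. Both require the same final mollification step (the paper's exercise~\ref{ex:IF<0-mollify}), and your remark that the modification changes $\IF$ by $O(\delta)$ is correct since $D_tV_0$ stays uniformly bounded under mollification of a piecewise-smooth Lipschitz field.
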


\begin{proof}
There is a non-trivial Jacobi field along $\gamma$ satisfying $J(a')=0=J(b')$.
The piecewise smooth vector field $\bar J$ defined by
\begin{equation}
\bar J(t)
=
\begin{cases}
J(t), & a'<t<b'\\
0, & \text{otherwise}
\end{cases}
\end{equation}
describes, roughly, a piecewise geodesic curve with the same length as $\gamma$ and with corners at $a'$ and $b'$.
Once we cut the corners, we should get a curve shorter than $\gamma$.

We assume that $a<a'$ and $b'<b$.
At least one has to be true, and if the other is replaced by an equality, the analysis we will do can be restricted to the other point.
It is enough to find a normal $C^1$ vector field $V$ with the desired property; see exercise~\ref{ex:IF<0-mollify}.

Let us denote $\zeta=D_tJ(a')$.
We can then parallel transport it as a vector field $\zeta(t)$ with $\zeta(a')=\zeta$.
This vector is normal to $\dot\gamma$ at all times.
Notice that since $J(a')=0$ but $J$ is not identically zero, $\zeta\neq0$.
For small $\eps>0$ we define a normal vector field $Z$ along $\gamma$ as
\begin{equation}
Z(t)
=
\begin{cases}
C\eps^{-1}(\abs{t-a'}-\eps)^2\zeta(t), & \abs{t-a'}<\eps\\
0, & \text{otherwise}
\end{cases}
\end{equation}
with some positive constant $C>0$.

Similarly, if $\eta=D_tJ(b')$, we define a parallel transport $\eta(t)$ and let\footnote{Capital $\zeta$ is $Z$, capital $\eta$ is $H$.}
\begin{equation}
H(t)
=
\begin{cases}
-C\eps^{-1}(\abs{t-b'}-\eps)^2\eta(t), & \abs{t-b'}<\eps\\
0, & \text{otherwise}
\end{cases}
\end{equation}
with the same constant $C>0$.
These two vector fields ``cut the corners'' as explained above.

We define $V(t)=\bar J(t)+Z(t)+H(t)$.
As a sum of three normal vector fields it is a normal vector field.
With a suitable choice of $C>0$ this vector field is $C^1$; see exercise~\ref{ex:IF<0-C1}.
Now it remains to show that $\IF(V,V)<0$ when $\eps>0$ is small enough.
We have
\begin{equation}
\label{eq:chain-2}
\begin{split}
\IF(V,V)
&=
\int_a^b
\left(
\abs{D_tV}^2
-
\ip{R_\gamma V}{V}
\right)
\dd t
\\&=
\int_a^b
\big(
\abs{D_t\bar J}^2
+
2\ip{D_t\bar J}{D_t(Z+H)}
+
\abs{D_t(Z+H)}^2
\\&\quad
-
\ip{R_\gamma \bar J}{\bar J}
-
2\ip{R_\gamma \bar J}{Z+H}
-
\ip{R_\gamma (Z+H)}{Z+H}
\big)
\dd t
\\&=
\int_{a'}^{b'}
\left(
\abs{D_t\bar J}^2
-
\ip{R_\gamma \bar J}{\bar J}
\right)
\dd t
\\&\quad+
\int_{a'-\eps}^{a'+\eps}
\left(
2\ip{D_t\bar J}{D_tZ}
+
\abs{D_tZ}^2
-
2\ip{R_\gamma \bar J}{Z}
-
\ip{R_\gamma Z}{Z}
\right)
\dd t
\\&\quad+
\int_{b'-\eps}^{b'+\eps}
\left(
2\ip{D_t\bar J}{D_tH}
+
\abs{D_tH}^2
-
2\ip{R_\gamma \bar J}{H}
-
\ip{R_\gamma H}{H}
\right)
\dd t.
\end{split}
\end{equation}
If we use exercise~\ref{ex:IF-JF} or remark~\ref{rmk:IF-JF} on the geodesic segment $\gamma|_{a',b'}$, we see that
\begin{equation}
\int_{a'}^{b'}
\left(
\abs{D_t\bar J}^2
-
\ip{R_\gamma \bar J}{\bar J}
\right)
\dd t
=
0.
\end{equation}
Since $\bar J$ is Lipschitz and vanishes at $a'$ and $b'$, we have $\abs{\bar J}=\Order(\eps)$ in the last two integrals of~\eqref{eq:chain-2}.
We also have $\abs{Z}=\Order(\eps)$ and $\abs{H}=\Order(\eps)$.
Exercise~\ref{ex:W12-Z-H} gives the other two integrals with contain only $Z$ and $H$.
As the intervals of integration have length $2\eps$, we have
\begin{equation}
\begin{split}
\IF(V,V)
&=
2\int_{a'}^{a'+\eps}
\ip{D_t J}{D_tZ}
\dd t
+
2\int_{b'-\eps}^{b'}
\ip{D_t J}{D_tH}
\dd t
\\&\quad+
\frac{8}{3}C^2\abs{\zeta}^2\eps
+
\frac{8}{3}C^2\abs{\eta}^2\eps
+
\Order(\eps^3).
\end{split}
\end{equation}
Let us study the first remaining integral.
In it $D_tJ(t)=\zeta(t)+\Order(\eps)$.
Using exercise~\ref{ex:W12-Z-H} gives thus
\begin{equation}
2\int_{a'}^{a'+\eps}
\ip{D_t J}{D_tZ}
\dd t
=
-4C\abs{\zeta}^2\eps
+
\Order(\eps^2).
\end{equation}
The other integral gives a similar negative leading order term.

We have arrived at
\begin{equation}
\IF(V,V)
=
-4C\abs{\zeta}^2\eps
-4C\abs{\eta}^2\eps
+\frac{8}{3}C^2\abs{\zeta}^2\eps
+\frac{8}{3}C^2\abs{\eta}^2\eps
+\Order(\eps^2).
\end{equation}
With our $C>0$ we have $4C>8C^2/3$, whence
\begin{equation}
\IF(V,V)
=
-\abs{\zeta}^2\left(4C-\frac83C^2\right)\eps
-\abs{\eta}^2\left(4C-\frac83C^2\right)\eps
+\Order(\eps^2)
\end{equation}
is indeed negative for $\eps>0$ small enough.
\end{proof}

\begin{ex}
\label{ex:IF<0-mollify}
Polish the proof by showing that if there is a compactly supported normal vector field $V$ with $C^1$ regularity so that $\IF_\gamma(V,V)<0$, then there is a smooth one as well.
\end{ex}

\begin{ex}
\label{ex:IF<0-C1}
Choose $C>0$ so that the vector field $V(t)$ of the proof is actually $C^1$.
What is the value of the constant and why is the resulting vector field $C^1$?
Verify that $4C>8C^2/3$.
\end{ex}

\begin{ex}
\label{ex:W12-Z-H}
Show that
\begin{equation}
\int_{a'-\eps}^{a+\eps}\abs{D_tZ}\dd t=\frac{8}{3}C^2\abs{\zeta}^2\eps
\end{equation}
and
\begin{equation}
\int_{a'}^{a'+\eps}\ip{\zeta(t)}{D_tZ(t)}\dd t=-2C\abs{\zeta}^2\eps.
\end{equation}
Similar formulas hold for $H$ with the norm of $\eta$.
\end{ex}

\begin{lemma}
\label{lma:IF>0}
Let $\gamma\colon[a,b]\to M$ be a geodesic.
If there are no conjugate points along $\gamma$, then $\IF(V,V)>0$ for all $V\in\NVF_0(\gamma)\setminus0$.
\end{lemma}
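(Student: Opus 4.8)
The plan is to linearize the problem: I would expand an arbitrary $V\in\NVF_0(\gamma)$ in a basis of normal Jacobi fields along $\gamma$ and rewrite $\IF(V,V)$ as a manifestly nonnegative integral. This is the classical ``index lemma'' argument, and the absence of conjugate points is exactly what makes such a basis available and forces definiteness rather than mere semidefiniteness.

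First I would build the Jacobi field basis. Since there are no conjugate points along $\gamma$, the point $\gamma(a)$ is in particular not conjugate to $\gamma(t)$ for any $t\in(a,b]$. Fix a parallel orthonormal frame $e_1,\dots,e_{n-1}$ of the normal bundle of $\gamma$, and let $J_i$ be the Jacobi field along $\gamma$ with $J_i(a)=0$ and $D_tJ_i(a)=e_i(a)$. By exercise~\ref{ex:ip-J-g} each $J_i$ lies in $\NVF(\gamma)$. For every $t\in(a,b]$ the vectors $J_1(t),\dots,J_{n-1}(t)$ are linearly independent, since a non-trivial vanishing combination would be a non-trivial Jacobi field vanishing at both $a$ and $t$, contradicting the absence of conjugate points (cf.~exercise~\ref{ex:cp-char}); in particular $\{J_i(b)\}$ is a basis of the normal space at $b$. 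Writing $J_i(t)=(t-a)\tilde J_i(t)$ and $V(t)=(t-a)\tilde V(t)$ with $\tilde J_i,\tilde V$ smooth (Hadamard's lemma), and observing that the matrix of the $\tilde J_i$ in the frame equals the identity at $t=a$ and stays invertible on the whole closed interval $[a,b]$, one can express $V=\sum_i f_iJ_i$ with coefficients $f_i$ smooth on $[a,b]$; the hypothesis $V(a)=0$ is exactly what allows cancellation of the factor $t-a$ at the degenerate endpoint.

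Next comes the key identity. Put $W=\sum_i f_i'J_i$ and $U=\sum_i f_iD_tJ_i$, so that $D_tV=W+U$. Differentiating $\ip{J_i}{D_tJ_j}-\ip{D_tJ_i}{J_j}$ in $t$, using the Jacobi equation $D_t^2J_j=-R_\gamma J_j$ and the symmetry of $R_\gamma$ (lemma~\ref{lma:R-symmetric}), one finds this expression is constant; since it vanishes at $t=a$ it vanishes identically, so $\ip{J_i}{D_tJ_j}=\ip{D_tJ_i}{J_j}$ for all $i,j$. That symmetry makes the cross terms collapse, and a short direct computation gives the pointwise identity
\[
\abs{D_tV}^2-\ip{V}{R_\gamma V}=\abs{W}^2+\partial_t\ip{V}{U}.
\]
Integrating this total derivative over $[a,b]$ and using $V(a)=V(b)=0$ to kill the boundary contribution yields
\[
\IF(V,V)=\int_a^b\abs{W(t)}^2\dd t\ge 0.
\]

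It remains to examine the equality case. If $\IF(V,V)=0$ then $\sum_i f_i'(t)J_i(t)=0$ for all $t$, so linear independence of $\{J_i(t)\}$ on $(a,b]$ forces each $f_i$ to be a constant $c_i$; then $0=V(b)=\sum_i c_iJ_i(b)$, and independence of $\{J_i(b)\}$ gives $c_i=0$, hence $V\equiv0$. Thus $\IF(V,V)>0$ for every $V\in\NVF_0(\gamma)\setminus0$. I expect the main obstacle to be the bookkeeping at the degenerate endpoint $t=a$, where every $J_i$ vanishes: one must check that the coefficients $f_i$ extend smoothly (at least $C^1$) up to $a$, so that $W$ is a genuine square-integrable vector field along $\gamma$ and the boundary term $\ip{V}{U}$ really does vanish at $a$ as well as at $b$. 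This is precisely where ``no conjugate points'' and $V(a)=0$ enter, and it is handled by the Hadamard-lemma factorization above; everything after that is the routine computation behind the displayed identity.
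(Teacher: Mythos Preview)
Your argument is correct and essentially identical to the paper's own proof: the paper also builds the basis of normal Jacobi fields $J_\alpha$ with $J_\alpha(a)=0$, uses exercise~\ref{ex:little-Bezout} (Hadamard's lemma) to get smooth coefficients, invokes the symmetry $\ip{D_tJ_i}{J_j}=\ip{J_i}{D_tJ_j}$ of exercise~\ref{ex:JF-symmetry}, and derives exactly your pointwise identity (with your $W,U$ called $A,B$) to reduce $\IF(V,V)$ to $\int_a^b\abs{W}^2\dd t$. The equality case is handled the same way.
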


\begin{proof}
Let $\zeta_1,\dots,\zeta_{n-1},\dot\gamma(a)$ be an orthonormal basis of $T_{\gamma(a)}M$.
We can extend these into an orthonormal parallel frame with the transported vectors $\zeta_\alpha(t)$.
For $\alpha\in\{1,\dots,n-1\}$ let $J_\alpha$ be the Jacobi field with $J_\alpha(a)=0$ and $D_tJ_\alpha(a)=\zeta_\alpha$.
Near the initial point we have $J_\alpha(t)=t\zeta_\alpha(t)+\Order(t^2)$.

When $t_0\in(a,b]$, the vectors $J_\alpha(t_0)$ are linearly independent.
To see this, suppose that there are coefficients $\lambda_\alpha$ so that
\begin{equation}
\sum_{\alpha}\lambda_\alpha J_\alpha(t_0)
=
0.
\end{equation}
Then the $J=\sum_{\alpha}\lambda_\alpha J_\alpha$ is a Jacobi field which vanishes at $t=a$ and $t=t_0$.
As there are no conjugate points by assumption, $J$ must vanish identically.
Therefore
\begin{equation}
0
=
D_tJ(a)
=
\sum_{\alpha}\lambda_\alpha \zeta_\alpha.
\end{equation}
The vectors $\zeta_\alpha$ are linearly independent, so every $\lambda_\alpha$ vanishes.
This proves the linear independence.\footnote{One could say that the vectors $J_\alpha(t)$ form a ``Jacobi frame'' along $\gamma$. This provides a valid basis in every tangent space due to the lack of conjugate points.}
The Jacobi fields $J_\alpha(t)$ therefore constitute a basis for the orthogonal complement of $\dot\gamma(t)$ in $T_{\gamma(t)}M$ for any $t>a$.

We can thus write our normal vector field $V\in\NVF_0(\gamma)$ in this basis:
\begin{equation}
V(t)
=
\sum_\alpha V_\alpha(t)J_\alpha(t).
\end{equation}
Here $V_\alpha(t)$ are real-valued functions.
As $V(a)=0$, the functions $V_\alpha(t)$ are smooth up to $t=a$; see exercise~\ref{ex:little-Bezout}.

Let us denote
\begin{equation}
A(t)
=
\sum_\alpha \dot V_\alpha(t)J_\alpha(t)
\end{equation}
and
\begin{equation}
B(t)
=
\sum_\alpha V_\alpha(t)D_tJ_\alpha(t).
\end{equation}
With this notation we have $D_tV=A+B$.

Let us compute $\partial_t\ip{V}{B}$ --- this turns out to simplify matters greatly.
At first we get
\begin{equation}
\partial_t\ip{V}{B}
=
\ip{D_tV}{B}
+
\ip{V}{D_tB}.
\end{equation}
We already know that $D_tV=A+B$, so let us find $D_tB$.
The Leibniz rule and the Jacobi equation give
\begin{equation}
\begin{split}
D_tB
&=
\sum_\alpha
\left[
\dot V_\alpha(t)D_tJ_\alpha(t)
+
V_\alpha(t)D_t^2J_\alpha(t)
\right]
\\&=
\sum_\alpha
\left[
\dot V_\alpha(t)D_tJ_\alpha(t)
-
V_\alpha(t)R_\gamma J_\alpha(t)
\right]
\\&=
-R_\gamma V(t)
+\sum_\alpha
\dot V_\alpha(t)D_tJ_\alpha(t)
.
\end{split}
\end{equation}
Using this with exercise~\ref{ex:JF-symmetry} leads to
\begin{equation}
\begin{split}
\ip{V}{D_tB}
&=
-\ip{V}{R_\gamma V}
+\sum_{\alpha}
\ip{V}{\dot V_\alpha D_tJ_\alpha}
\\&=
-\ip{V}{R_\gamma V}
+\sum_{\alpha,\beta}
\ip{V_\beta J_\beta}{\dot V_\alpha D_tJ_\alpha}
\\&=
-\ip{V}{R_\gamma V}
+\sum_{\alpha,\beta}
V_\beta\dot V_\alpha
\ip{J_\beta}{D_tJ_\alpha}
\\&=
-\ip{V}{R_\gamma V}
+\sum_{\alpha,\beta}
V_\beta\dot V_\alpha
\ip{D_tJ_\beta}{J_\alpha}
\\&=
-\ip{V}{R_\gamma V}
+\sum_{\alpha,\beta}
\ip{V_\beta D_tJ_\beta}{\dot V_\alpha J_\alpha}
\\&=
-\ip{V}{R_\gamma V}
+\ip{B}{A}.
\end{split}
\end{equation}
Putting all of this together gives
\begin{equation}
\begin{split}
\partial_t\ip{V}{B}
&=
\ip{A+B}{B}
-\ip{V}{R_\gamma V}
+\ip{B}{A}
\\&=
\abs{D_tV}^2-\abs{A}^2
-\ip{V}{R_\gamma V}.
\end{split}
\end{equation}
Now we can finally turn to the index form.
With these preparations it becomes easy to analyze.

Because $V(a)=0=V(b)$, we have
\begin{equation}
\begin{split}
\IF(V,V)
&=
\int_a^b
\left(
\abs{D_tV}^2
-\ip{R_\gamma V}{V}
\right)
\dd t
\\&=
\int_a^b
\left(
\partial_t\ip{V}{B}
+\abs{A}^2
\right)
\dd t
\\&=
\int_a^b
\abs{A}^2
\dd t
\geq
0.
\end{split}
\end{equation}
If equality holds, then $A=0$, which means that $\dot V_\alpha=0$ and thus each coefficient $V_\alpha(t)$ is constant.
But every $V_\alpha(t)$ vanishes at $t=b$, so $V_\alpha=0$.
This means that $V=0$, so $\IF(V,V)=0$ is only possible when $V=0$.
\end{proof}

\begin{remark}
If there are conjugate points, the ``Jacobi frame'' used above only fails to be a frame at conjugate points.
This makes one think that perhaps the Hessian only has very few negative eigenvalues and that they should correspond to conjugate points.
This is indeed true but is beyond the scope of this course.
The maximal dimension of a subspace of $\NVF_0(\gamma)$ on which the index form is negative definite is called the index of the geodesic.
This index is finite and is indeed equal to the number of interior conjugate points, as long as one counts with multiplicity.
\end{remark}

\begin{iex}
\label{ex:JF-symmetry}
Let $J_1$ and $J_2$ be two Jacobi fields along the same geodesic.
Show that
\begin{equation}
\partial_t
\left(
\ip{D_tJ_1}{J_2}
-
\ip{J_1}{D_tJ_2}
\right)
=
0.
\end{equation}
Conclude that if $J_1$ and $J_2$ both vanish at the same point, then $\ip{D_tJ_1}{J_2}=\ip{J_1}{D_tJ_2}$ at all times.
\end{iex}

\begin{ex}
\label{ex:little-Bezout}
Little B\'ezout's theorem concerns polynomials: If $r$ is a root of a polynomial $p$, then $p(x)=(x-r)q(x)$ for some polynomial $q$.

Show that a similar result holds for smooth functions.
That is, show that if $f\in C^\infty(\R)$ and $f(0)=0$, then $f(t)=tg(t)$ for some smooth function $g$.
A neat way to do this is to compute $\int_0^1\Der{t}f(tx)\dd t$ in two ways.
This gives an explicit formula for $g$ as an integral, and smoothness is far easier to see than by studying $g(t)=f(t)/t$.
\end{ex}

\begin{theorem}
\label{thm:IF-definite}
Let $\gamma\colon[a,b]\to M$ be a geodesic.
Consider the index form $\IF_\gamma$ along it on $\NVF_0$.
\begin{enumerate}
\item
If there are no conjugate points along $\gamma$, then it is positive definite.
\item
If the endpoints are conjugate but there are no other conjugate points, then it is positive semidefinite.
\item
If an interior point is conjugate to another point, then it is indefinite.
\end{enumerate}
\end{theorem}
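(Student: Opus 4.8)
The plan is to reduce all three cases to the two lemmas just proved, with a single genuinely new ingredient --- a limiting argument --- needed only for the semidefinite case.

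Case~(1) is immediate: it is precisely Lemma~\ref{lma:IF>0}, which says that absence of conjugate points along $\gamma$ forces $\IF_\gamma(V,V)>0$ for every $V\in\NVF_0(\gamma)\setminus0$. So nothing new is required there.

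For case~(2) I would exhaust $[a,b]$ from the left. For $\tau\in(a,b)$ the restriction $\gamma|_{[a,\tau]}$ is again a geodesic, and since Jacobi fields along it are just restrictions of Jacobi fields along $\gamma$, it has no conjugate points at all (the only conjugate pair along $\gamma$ is $\{\gamma(a),\gamma(b)\}$ and $\tau<b$). Hence case~(1) applies to $\gamma|_{[a,\tau]}$, so $\IF_{\gamma|_{[a,\tau]}}\ge0$ on $\NVF_0(\gamma|_{[a,\tau]})$. Given $V\in\NVF_0(\gamma)$, I would attach to each $\tau$ a comparison field that genuinely vanishes at $\tau$: let $P_\tau$ be the parallel transport along $\gamma$ of the vector $V(\tau)$ (a normal field, since $V(\tau)\perp\dot\gamma(\tau)$ and parallel transport preserves inner products and fixes $\dot\gamma$), and set
\[
V_\tau(t)=V(t)-\frac{t-a}{\tau-a}\,P_\tau(t),\qquad t\in[a,\tau].
\]
Then $V_\tau$ is a smooth normal field along $\gamma|_{[a,\tau]}$ with $V_\tau(a)=0=V_\tau(\tau)$, so $\IF_{\gamma|_{[a,\tau]}}(V_\tau,V_\tau)\ge0$. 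As $\tau\to b^-$ one has $V(\tau)\to V(b)=0$, hence $\abs{P_\tau}\equiv\abs{V(\tau)}\to0$ and $D_tP_\tau=0$; therefore $V_\tau\to V$ and $D_tV_\tau\to D_tV$ uniformly while the intervals $[a,\tau]$ exhaust $[a,b]$. Passing to the limit in $\IF_{\gamma|_{[a,\tau]}}(V_\tau,V_\tau)\ge0$ gives $\IF_\gamma(V,V)\ge0$, i.e.\ positive semidefiniteness. (It is not positive definite, since the Jacobi field realizing the conjugacy of the endpoints lies in $\NVF_0(\gamma)\setminus0$ and is $\IF_\gamma$-null by Exercise~\ref{ex:IF-JF}; see Remark~\ref{rmk:IF-JF}.)

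For case~(3), suppose $\gamma(a')$ and $\gamma(b')$ are conjugate with at least one of $a',b'$ in the open interval $(a,b)$. After relabelling so that $a'<b'$ one has $a\le a'<b'\le b$ with not both $a'=a$ and $b'=b$, hence $0<b'-a'<b-a$, and Lemma~\ref{lma:IF<0} produces $W\in\NVF_0(\gamma)$ with $\IF_\gamma(W,W)<0$. To exhibit a positive direction I would add a fast wiggle: since the presence of conjugate points forces $n\ge2$, choose a unit vector orthogonal to $\dot\gamma(a)$ and parallel transport it to a unit parallel normal field $\zeta(t)$, and put $V_k(t)=\sin\!\big(k\pi\tfrac{t-a}{b-a}\big)\zeta(t)\in\NVF_0(\gamma)$. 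Then $D_tV_k$ carries a factor $k$ while $\ip{V_k}{R_\gamma V_k}$ stays bounded, so $\IF_\gamma(V_k,V_k)$ is of order $k^2$ and positive for $k$ large. Having found vectors on which $\IF_\gamma$ takes both signs, $\IF_\gamma$ is indefinite; this is the quantitative form of the remark that adding wiggles lengthens a geodesic.

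The one delicate step is the semidefinite case: one must upgrade the \emph{strict} positivity available on every shorter interval to a \emph{non-strict} inequality on $[a,b]$, and the subtlety is that $\IF$ involves $D_tV$, so the comparison fields $V_\tau$ must be chosen to vanish at $\tau$ \emph{and} converge to $V$ in $C^1$ --- which is exactly what the parallel-transport correction above arranges. The rest is bookkeeping and direct appeals to Lemmas~\ref{lma:IF>0} and~\ref{lma:IF<0}.
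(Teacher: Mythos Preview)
Your proof is correct and follows the paper's strategy of assembling the three cases from Lemma~\ref{lma:IF>0}, Lemma~\ref{lma:IF<0}, and Remark~\ref{rmk:IF-JF}. The paper's own proof is a two-line citation of those results; your limiting argument for case~(2) explicitly supplies the one step the paper leaves to the reader (namely, upgrading positive definiteness on every proper subinterval $[a,\tau]$ to semidefiniteness on $[a,b]$), and your sine-wiggle construction in case~(3) makes concrete the paper's appeal to ``there are always vector fields with positive index form.'' So the route is the same, but your version is more self-contained.
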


\begin{proof}
This follows from remark~\ref{rmk:IF-JF}, lemma~\ref{lma:IF<0}, and lemma~\ref{lma:IF>0}.
Recall that there are always vector fields $V\in\NVF_0(\gamma)$ with positive index form.
\end{proof}

\subsection{The index form in constant curvature}
\label{sec:IF-cc}

For a somewhat concrete example, let us take another look at space of constant curvature.
See section~\ref{sec:JF-const-curv}.
In this setting the index form on normal vector fields takes the form
\begin{equation}
\IF(V,W)
=
\int_a^b
\left(
\ip{D_tV}{D_tW}
-
k\ip{V}{W}
\right)
\dd t.
\end{equation}
When $k\leq0$, this is positive definite, and more strongly so when $k<0$.

Indeed, if one studies the forms of Jacobi fields in constant curvature as given in section~\ref{sec:JF-const-curv}, one sees that there are no conjugate points when $k\leq0$.
Theorem~\ref{thm:IF-definite} predicts exactly this behaviour.

If $k>0$ definiteness depends on length.
As we saw in exercise~\ref{ex:indef-not-minimal}, the index form is positive semidefinite (and in fact positive definite) when the geodesic is short enough.
Conjugate points in constant curvature $k>0$ are distance $\pi/\sqrt{k}$ apart.
If the geodesic is longer, then the index form becomes indefinite.

One way to interpret this is to consider the Poincar\'e inequality
\begin{equation}
\int_a^b
\abs{V}^2
\dd t
\leq
C
\int_a^b
\abs{D_tV}^2
\dd t,
\end{equation}
valid for all $V\in\NVF_0(\gamma)$.
If $C$ is small enough, this ensures that the index form is positive.
The constant $C$ becomes bigger when the interval $[a,b]$ gets longer.
At $b-a=\pi/\sqrt{k}$ the optimal Poincar\'e constant $C$ becomes exactly $1/k$, making the index form barely positive semidefinite.

\qa

\section{The tangent bundle}
\label{sec:TM}

\subsection{The tangent bundle as a manifold}

Previously, we have considered the tangent bundle as the disjoint union of tangent spaces:
\begin{equation}
TM
=
\coprod_{x\in M}T_xM.
\end{equation}
While this is correct as a set, there is more structure.
The tangent bundle is a manifold.

It is often convenient to write a tangent vector as a pair $(x,v)$, where $x\in M$ and $v\in T_xM$.
The tangent bundle is the set of all such pairs.
Sometimes the base point $x$ is left implicit.
When $U\subset M$ is open, we denote $TU=\{(x,v)\in TM;x\in U\}$.

Consider an open subset $U\subset M$ and a diffeomorphism $\phi\colon U\to\phi(U)\subset\R^n$.
The coordinate maps of a coordinate chart are often denoted by $x^i$, so that each $x^i\colon U\to\R$ is a smooth function and its differential is the familiar basis covector field $\der x^i$.
That is, at any point $x$ the differential $\der x^i\colon T_xM\to\R$ is a linear map.

Combining the components together, we have the map $\der\phi(x)\colon T_xM\to\R^n$ given by
\begin{equation}
\der\phi(x)v
=
(\der x^1(x)v,\der x^2(x)v,\dots,\der x^n(x)v)
=
(v^1,v^2,\dots,v^n).
\end{equation}
This map is a linear bijection since $\der\phi(x)v$ expresses $v$ in a basis.

We have a map on each tangent space, and we can promote it to a map $\der\phi$ on the whole bundle.
We define $\der\phi\colon TU\to\R^n\times\R^n$ so that
\begin{equation}
\der\phi(x,v)
=
(\phi(x),\der\phi(x)v).
\end{equation}
The base point $x$ is mapped with the coordinate map $\phi$ itself, whereas the tangent vector $v$ is mapped by its differential $\der\phi(x)$.

We want to use $\der\phi$ as a coordinate chart on $TM$ to make it into a manifold.
This chart makes $TU$ look like the product $U\times\R^n$.
However, the tangent bundle is not always a product globally although; this only works for open sets $U$ diffeomorphic to an open Euclidean set.

\begin{ex}
Let $(U_\alpha,\phi_\alpha)_{\alpha\in A}$ be a smooth atlas of $M$.
We defined a topology on $TM$ by saying that $V\subset TM$ is open if and only if $\der\phi_\alpha(TU_\alpha\cap V)\subset\R^{2n}$ is open for all $\alpha\in A$.
Show that this is a topology.
\end{ex}

\begin{ex}
A chart $\phi_\alpha\colon U_\alpha\to\phi_\alpha(U_\alpha)\subset\R^n$ induces a map $\der\phi_\alpha\colon TU_\alpha\to V_\alpha\subset\R^{2n}$ as described above.
Consider two of these, $\alpha=1,2$.
Given the diffeomorphic transition function $\psi$ between $\phi_1$ and $\phi_2$, write down the transition function $\Psi$ between $\der\phi_1$ and $\der\phi_2$.
Prove that it is a diffeomorphism.

This shows that a smooth atlas $(U_\alpha,\phi_\alpha)_{\alpha\in A}$ on $M$ induces a smooth atlas $(TU_\alpha,\der\phi_\alpha)_{\alpha\in A}$ on $TM$.
In particular, $TM$ is a smooth manifold of dimension $2n$.
\end{ex}

\begin{ex}
Is the smooth atlas induced by a maximal smooth atlas maximal?
\end{ex}

A chart $\phi\colon U\to\R^n$ gives local coordinates on $M$.
The map $\der\phi\colon TU\to\R^{2n}$ gives the induced coordinates on $TM$.

\begin{iex}
There is a canonical projection $\pi\colon TM\to M$ given by $\pi(x,v)=x$.
Show that this is a smooth map between the smooth manifolds $TM$ and $M$.
\end{iex}

\begin{iex}
\label{ex:HV-orientation}
Draw a picture of the tangent bundle so that $M$ is horizontal and the fibers are vertical.
Indicate $M$, a point $x$, and a fiber $T_xM$ on it.
It is important to draw the picture in this orientation.
\end{iex}

\subsection{Tensor bundles}

Fix some local coordinates on $U\subset M$.
We saw above that the linear maps $\der x^i\colon T_xM\to\R$ produced a map $T_xM\to\R^n$ and thus local coordinates $TU\to\R^{2n}$.

Recall that $T_xM$ is the dual of $T_x^*M$.
We can use the linear maps $\partial_i\colon T_x^*M\to\R$ to produce a map $T_x^*M\to\R^n$ and thus coordinates on $T^*U$.
A similar construction turns $T^*M$ into a smooth manifold of dimension $2n$.

\begin{remark}
The musical isomorphisms of a Riemannian manifold are diffeomorphisms between $TM$ and $T^*M$.
\end{remark}

Any tensor bundles can be treated in a similar fashion.
For example, consider $TM\otimes TM$.
The basis elements of $T_xM\otimes T_xM$ are $\partial_i\otimes\partial_j$.
The dual basis consists of $\der x^i\otimes\der x^j$ given by
\begin{equation}
\der x^i\otimes\der x^j
(a)
=
a^{ij}
\end{equation}
for $a\in T_xM\otimes T_xM$.
Equivalenty, if we expand $a$ in terms of basis elements as
\begin{equation}
a
=
a^{ij}
\partial_i\otimes\partial_j,
\end{equation}
we can describe the property as
\begin{equation}
\der x^i\otimes\der x^j
(
\partial_k\otimes\partial_l
)
=
\delta_{ik}\delta_{jl}.
\end{equation}
Using the maps $\der x^i\otimes\der x^j\colon TU\otimes TU\to\R$ we get coordinate charts $TU\otimes TU\to\R^{3n}$ on $TM\otimes TM$.
These make the tensor bundle $TM\otimes TM$ into a smooth manifold.

If $E$ is any tensor bundle (like $TM$ or $T^*M\otimes TM$), we denote the projection $\pi\colon E\to M$ by the same symbol.
In general, a bundle is a local product that comes with a global projection.

The preimage $\pi^{-1}(x)$ of a singleton is called a fiber of the bundle.
The fibers of the tangent bundle are the tangent spaces $T_xM=\pi^{-1}(x)$.

\subsection{Tensor fields}

\begin{definition}
A smooth section of a tensor bundle $E$ is a smooth map $f\colon M\to E$ for which $\pi(f(x))=x$ for all $x\in M$.
(In other words, it is a smooth right inverse of the projection $\pi$.)
\end{definition}

A smooth section of the tangent bundle is also called a smooth vector field.
We defined this concept earlier in a different fashion.
Sections of general tensor bundles are called tensor fields.

\begin{ex}
Show that a vector field is smooth if and only if all its components are smooth real-valued functions in any local coordinate system.
This shows that our two definitions of a smooth vector field agree.
The same holds true for tensor fields of any type.
\end{ex}

\subsection{The sphere bundle}

In all of our examples so far the fiber of a bundle is a vector space.
Such bundles are called vector bundles.
There are other kinds of bundles as well, and many interesting ones are obtained by subbundles of vector bundles.
A subbundle is, informally, a subset of a bundle that looks locally like a product.
A subbundle is a submanifold of the bundle.

The most important example to us is the sphere bundle of a Riemannian manifold
\begin{equation}
SM
=
\{(x,v)\in TM;\abs{v}=1\}.
\end{equation}
The fibers $S_xM$ of $SM$ are unit spheres in the tangent spaces $T_xM$.

\begin{ex}
If $f\colon N\to\R$ is a smooth function on a smooth manifold, then the level set $f^{-1}(0)$ is a smooth submanifold if $\der f\neq0$ on this set.
This smoothness follows from the implicit function theorem.
Use this to show that the sphere bundle is a smooth submanifold of $TM$.
\end{ex}

The tensor bundles work on a smooth manifold, but the sphere bundle requires a metric.

\subsection{Directions and iterated bundles}

We can think that $T_xM$ is, informally, the set of all directions one could move on $M$ from $x$.
Thinking of $TM$ as the set of all possible directions of motion is sometimes useful.

The tangent bundle $TM$ is a smooth manifold.
The possible directions on it are described by its tangent bundle, the double tangent bundle $TTM=T(TM)=T^2M$.

The fiber at $(x,y)\in TM$, the space $T_{(x,y)}TM$, describes all the possible directions one can move in from $(x,y)$.
Heuristically, one should be able to move in two kinds of directions: on the base or on the fiber.
This is indeed true invariantly and usefully, but formalizing it is postponed to the next section.

We can, however, describe the tangent vectors in local coordinates.
A local coordinate chart $\phi\colon U\to\R^n$ induces local coordinates $\der\phi\colon TU\to\R^n\times\R^n$ as described above.
Let us denote these coordinates by $x^i$ and $y^i$ --- it makes sense to divide coordinates in two halves for base and fiber.
The natural basis of $T_{(x,y)}TM$ is given by the vectors
\begin{equation}
\partial_{x^1},\dots,\partial_{x^n},\partial_{y^1},\dots,\partial_{y^n}.
\end{equation}
The dual basis on $T^*_{(x,y)}TM$ is given by $\der x^i$ and $\der y^i$.

One can take the tangent bundle of any smooth manifold whatsoever.
A very natural space for us will be $TSM$.

\begin{iex}
Let $M$ have dimension $n$ as always.
What are the dimensions of the smooth manifolds $TM$, $T^*M$, $SM$, $TTM$, and $TSM$?
\end{iex}

Manifolds can be embedded in Euclidean spaces and this can give a way to visualize matters.
But when it comes to the tangent bundle or especially the double tangent bundle, it is far more transparent to work with abstract manifolds.

\subsection{Lifts and geodesics}

Many things can be lifted from manifolds to their tangent bundles.

\begin{ex}
Promoting a smooth function into a function between the bundles is often useful.
We defined earlier the differential of a smooth function $f\colon M\to N$ at $x\in M$ as a linear map $\der f(x)\colon T_xM\to T_{f(x)}N$.
This induces a map $\der f\colon TM\to TN$.
Show that $\der f$ is a bijection if and only if $f$ is a diffeomorphism.
\end{ex}

The lift of a smooth curve $\gamma\colon\R\to M$ is the curve $\sigma\colon\R\to TM$ given by $\sigma(t)=(\gamma(t),\dot\gamma(t))$.
The second order geodesic equation for $\gamma$ is a first order equation for the lift $\sigma$.
We used this to prove existence, uniqueness, and smoothness of geodesics; see exercises~\ref{ex:ge-e/u} and~\ref{ex:exp-smooth}.

Writing a curve $\sigma$ on $TM$ in terms of the local coordinates on $TM$ gives $\der x^i(\sigma(t))=\gamma^i(t)$ and $\der y^i(\sigma(t))=\dot\gamma^i(t)$.
If $\sigma$ is the lift of a geodesic $\gamma$, then $\partial_t\gamma^i=\dot\partial^i$ and $\partial_t\dot\gamma^i=-\Cs ijk\dot\gamma^j\dot\gamma^k$.
In other words,
\begin{equation}
\begin{split}
\partial_t\der x^i(\sigma)
&=
\partial_t\gamma^i
\\&=
\dot\gamma^i
\\&=
\der y^i(\sigma).
\end{split}
\end{equation}
Similarly, $=-\Cs ijk\dot\gamma^j\dot\gamma^k$
\begin{equation}
\begin{split}
\partial_t\der y^i(\sigma)
&=
\partial_t\dot\gamma^i
\\&=
-\Cs ijk\dot\gamma^j\dot\gamma^k
\\&=
-\Cs ijk\der y^j(\sigma)\der y^k(\sigma).
\end{split}
\end{equation}
That is, $\sigma$ satisfies
\begin{equation}
\partial_t\sigma(t)
=
X(\sigma(t)),
\end{equation}
for all $t$,
where $X$ is a vector field on $TM$ given in local coordinates by
\begin{equation}
\label{eq:X-def}
X
=
y^i\partial_{x^i}
-
\Cs ijk y^jy^k\partial_{y^i}.
\end{equation}
This is called the geodesic vector field.

This should be interpreted so that if $\sigma=(x,v)\in TM$ is some initial for a geodesic, $X(\sigma)\in T_{(x,v)}TM$ tells which way the lift of the geodesic $\gamma_{x,v}$ will start moving.
The $x$-component of $\sigma$ moves in the direction of $y$ and the $y$-component moves in a direction depending on the Christoffel symbol.

Let us recall that an integral $\gamma$ of a vector field $V$ on a smooth manifold $N$ is a smooth curve on $N$ satisfying $\dot\gamma(t)=V(\gamma(t))$.

\begin{ex}
\label{ex:integral-curve-geodesic}
Show that if a smooth curve $\sigma\colon\R\to TM$ is the integral curve of the geodesic vector field, then it is a lift of a geodesic.
The opposite conclusion was obtained above.

We have found a new description of geodesics:
A curve is a geodesic if and only if its lift is an integral curve of the geodesic vector field.
Another way to phrase it is that a geodesic is a projection of an integral curve of the geodesic vector field.
\end{ex}

We will study this idea further, but we will first need to split $T_{(x,y)}TM$ into ``base directions'' and ``fiber directions'' invariantly.
The span of the vectors $\partial_{y^i}$ depends on the choice of coordinates.

\qa

\section{Horizontal and vertical subbundles}

As we discussed above, the double tangent bundle $TTM$ describes the directions of motion on $TM$.
There are two basic ways to move: along a fiber or in the base.
As per exercise~\ref{ex:HV-orientation}, directions along the fiber are called vertical and those in the base horizontal.

\subsection{The vertical subbundle}

Consider a curve $\sigma\colon(-\eps,\eps)\to TM$ with $\sigma(0)=\theta=(x,y)\in TM$.
If $\sigma(t)$ stays on the fiber $T_xM$, it makes sense to consider $\dot\sigma(0)\in T_\theta TM$ vertical.
We can describe $\sigma$ staying on the same fiber by saying that $\pi(\sigma(t))$ stays constant.
Differentiating this with respect to $t$ at $t=0$ leads to $\der\pi(\theta)\dot\sigma(0)=0$.

\begin{definition}
\label{def:V-theta}
The vertical fiber at $\theta\in TM$ is
\begin{equation}
V(\theta)
=
\ker(\der\pi(\theta))
\subset
T_\theta TM.
\end{equation}
\end{definition}

Observe that this definition does not depend on the Riemannian metric $g$ and can thus be defined on any smooth manifold.

\subsection{The horizontal subbundle}

Consider again a curve $\sigma$ through $\theta\in TM$.
The velocity of the curve should be considered horizontal if only the base point moves but the tangent vector does not.
But that does not directly make sense; as $x$ changes, we cannot keep $v\in T_xM$ constant.
Fortunately, there is a way to make sense of this through covariant derivatives.

Consider the curve $\gamma=\pi\circ\sigma\colon(-\eps,\eps)\to M$ projected to the base manifold $M$.
Now $\sigma(t)\in T_{\gamma(t)}M$ for all $t$, so $\sigma$ can be regarded as a vector field along the curve $\gamma$.
To make this more precise, we write $\sigma(t)=(\gamma(t),\Sigma(t))\in TM$.
It makes sense to say that the curve $\sigma$ goes in a horizontal direction if the covariant derivative along $\gamma$ vanishes.

To make this more precise, we define a map $K_\theta\colon T_\theta TM\to T_xM$ by requiring that
\begin{equation}
K_\theta\dot\sigma(0)
=
D_t\Sigma(0),
\end{equation}
where $D_t$ is the covariant derivative along the curve $\gamma$.

To make $K_\theta$ a well-defined map, we need to check two properties:
\begin{enumerate}
\item
The map is defined everywhere:
For every $\xi\in T_\theta TM$ there is a curve $\sigma$ through $\theta$ with $\dot\sigma(0)=\xi$.
\item
The map has a unique value everywhere:
If $\sigma_1$ and $\sigma_2$ are two curves through $\theta$ in the direction $\xi\in T_\theta TM$, then $D_t\Sigma_1(0)=D_t\Sigma_2(0)$.
\end{enumerate}

\begin{ex}
Explain why these properties hold and why $K_\theta$ is linear.
\end{ex}

Informally, we can think of a vector $\xi\in T_\theta TM$ as $(A,B)$, where $A$ points along the base and $B$ along the fiber.
In this view $K_\theta\xi$ is the covariant derivative of $B$ in the direction $A$.

We can promote these maps $K_\theta$ into a global connection map
\begin{equation}
K\colon TTM\to TM
\end{equation}
given by
\begin{equation}
K(\theta,\xi)
=
(\pi(\theta),K_\theta\xi).
\end{equation}

\begin{definition}
\label{def:H-theta}
The horizontal fiber at $\theta\in TM$ is
\begin{equation}
H(\theta)
=
\ker(K_\theta)
\subset
T_\theta TM.
\end{equation}
\end{definition}

An alternative way to describe horizontal directions is to require that parallel transported objects are horizontal.
To achieve this, we define a horizontal lift $L_\theta\colon T_xM\to T_\theta TM$ at $\theta=(x,v)$ which describes the ways parallel transports of $v$ evolve in different directions.
Given any $w\in T_xM$, let $\gamma_w$ be a curve through $x$ with $\dot\gamma_w(0)=w$.
Let $P_w^v(t)$ be the parallel transport of $v$ along $\gamma_w$.
We get a curve $\sigma_w^v(t)=(\gamma_v(t),P_w^v(t))$ on $TM$.
We now define
\begin{equation}
L_\theta w
=
\dot\sigma_w^v(0).
\end{equation}
Checking that this map is well defined (independent of the choice of the curve $\gamma_w$) and linear is similar to the check of $K_\theta$.

\begin{ex}
Show that $\ker(K_\theta)=\im(L_\theta)$.
This gives a different way to view $H(\theta)$.
\end{ex}

Notice that $H(\theta)$ does depend on the notion of parallel transport and therefore on $g$.
Vertical directions are smooth concept, horizontal ones are a metric one.

\subsection{Properties of the vertical and horizontal bundles}

The vertical subbundle of $TTM$ is the has the fiber $V(\theta)$ at $\theta\in TM$.
Similarly, the fibers of the horizontal subbundle are $H(\theta)$.
The vertical subbundle gives all the directions along the fibers and the horizontal ones all the directions ``along the base''.

The various maps we have seen so far have several interesting properties.

\begin{iex}
Show that $\der\pi(\theta)\circ L_\theta=\id$ on $T_xM$.
\end{iex}

\begin{ex}
\label{ex:H-proj-bij}
Show that $\der\pi(\theta)|_{H(\theta)}\colon H(\theta)\to T_xM$ is a linear bijection.
\end{ex}

\begin{ex}
\label{ex:V-proj-bij}
Show that $K_\theta|_{V(\theta)}\colon V(\theta)\to T_xM$ is a linear bijection.
\end{ex}

\begin{iex}
\label{ex:H+V}
Show that $T_\theta TM=H(\theta)\oplus V(\theta)$.
That is, show that the horizontal and vertical fibers together span $T_\theta TM$ and they only intersect at the origin.
\end{iex}

In conclusion, $T_\theta TM$ is can be seen as a product of the horizonal and the vertical fiber.
Both $H(\theta)$ and $V(\theta)$ can be identified with $T_xM$.
The projections from $T_\theta TM$ to these two components are $\der\pi(\theta)$ and $K_\theta$.
Indeed, the map
\begin{equation}
\label{eq:j-def}
j_\theta
\colon
T_\theta TM
\to
T_xM\times T_xM
\end{equation}
given by
\begin{equation}
j_\theta(\xi)
=
(\der\pi(\theta)\xi,K_\theta\xi)
\end{equation}
is a linear bijection.

It is useful to denote the horizontal and vertical parts of $\xi\in T_\theta TM$ as $\xi^h=\der\pi(\theta)\xi\in T_xM$ and $\xi^v=K_\theta\xi\in T_xM$.
Identifying with $j_\theta$, we can write $\xi=(\xi^h,\xi^v)$.

\subsection{The Sasaki metric}

The space $T_xM$ has an inner product given by the metric tensor.
The product of two inner product spaces $A$ and $B$ is an inner product space in a natural way:
\begin{equation}
\ip{(a,b)}{(a',b')}_{A\times B}
=
\ip{a}{a'}_A+\ip{b}{b'}_B.
\end{equation}

\begin{definition}
The Sasaki metric on $TM$ is defined so that for each $\theta\in TM$ the map $j_\theta\colon T_\theta TM\to T_xM\times T_xM$ of~\eqref{eq:j-def} is a linear isometry.
\end{definition}

In other words, the Sasaki metric is a metric tensor on $TM$ --- a section of $T^*(TM)\otimes T^*(TM)$ --- defined so that
\begin{itemize}
\item
$H(\theta)$ is orthogonal to $V(\theta)$,
\item
$\der\pi(\theta)|_{H(\theta)}\colon H(\theta)\to T_xM$ is isometric, and
\item
$K_\theta|_{V(\theta)}\colon V(\theta)\to T_xM$ is isometric.
\end{itemize}
For any $\xi,\xi'\in T_\theta TM$ we have
\begin{equation}
\begin{split}
\ip{\xi}{\xi'}
&=
\ip{\der\pi(\theta)\xi}{\der\pi(\theta)\xi'}
+
\ip{K_\theta\xi}{K_\theta\xi'}
\\&=
\ip{\xi_h}{\xi_h'}
+
\ip{\xi_v}{\xi_v'}.
\end{split}
\end{equation}

\subsection{Coordinate expressions}
\label{sec:HV-coords}

Suppose we are given some coordinates $x$ on an open set $U\subset M$.
That is, we have a map $x\colon U\to\R^n$ whose coordinates are $x^i$.
We freely identify the point with its coordinates, so we have dropped the chart $\phi$ altogether from notation.

The local coordinates $x$ on $M$ induce local coordinate $(x,y)$ on $TM$.
Informally, ``$y=\der x$'' since the induced coordinates are given by differentials of the original coordinates.
That is, a vector $v\in T_xM$ can be written as
\begin{equation}
v
=
y^i\partial_{x^i}.
\end{equation}
If we stay on the same fiber, only the variable $y$ changes.
The coordinates $y^i$ on the fiber are simply the components of the tangent vector in the coordinates $x$.

Similarly, the local coordinates $(x,y)$ on $TM$ induce local coordinates $(x,y,X,Y)$ on $TTM$.
A vector $\xi\in T_\theta TM$ at $\theta=(x,y)\in TM$ can be written as
\begin{equation}
\xi
=
X^i \partial_{x^i} + Y^i \partial_{y^i}.
\end{equation}
The vectors $\partial_{x^i}$ and $\partial_{y^i}$ form a basis for $T_\theta TM$, but this basis does not go well together with the decomposition to horizontal and vertical directions.
The vertical part behaves better.

\begin{lemma}
\label{lma:V-basis}
The vector fields $\partial_{y^i}$ are a basis for $V(\theta)$.
\end{lemma}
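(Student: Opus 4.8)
The plan is to compute everything in the given coordinates and then count dimensions. First I would write the projection $\pi\colon TM\to M$ in the coordinates $(x,y)$ on $TM$ and $x$ on $M$: it is simply $(x,y)\mapsto x$. Its differential at $\theta=(x,y)$ therefore acts on the coordinate basis of $T_\theta TM$ by $\der\pi(\theta)\partial_{x^i}=\partial_{x^i}$ (the base coordinate vector at $x$) and $\der\pi(\theta)\partial_{y^i}=0$. This immediately shows $\partial_{y^i}\in\ker(\der\pi(\theta))=V(\theta)$ for each $i$, so the span of the $\partial_{y^i}$ is contained in $V(\theta)$.

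Next I would establish the reverse inclusion by a dimension count. The vectors $\partial_{y^1},\dots,\partial_{y^n}$ are part of the coordinate basis $\partial_{x^1},\dots,\partial_{x^n},\partial_{y^1},\dots,\partial_{y^n}$ of the $2n$-dimensional space $T_\theta TM$, hence they are linearly independent and span an $n$-dimensional subspace. On the other hand $\der\pi(\theta)\colon T_\theta TM\to T_xM$ is surjective (from the coordinate formula above it already hits every $\partial_{x^i}$), so by rank--nullity its kernel $V(\theta)$ has dimension $2n-n=n$. An $n$-dimensional subspace of an $n$-dimensional space must be the whole space, so the span of the $\partial_{y^i}$ equals $V(\theta)$, and being linearly independent they form a basis.

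I do not expect any real obstacle here: the only subtlety is making sure one reads off the differential of $\pi$ correctly in the induced coordinates $(x,y)$ on $TM$, i.e. remembering that these coordinates were defined precisely so that $\pi$ becomes the coordinate projection $(x,y)\mapsto x$; once that is in hand the argument is purely linear algebra. One could alternatively phrase it via curves — a curve $\sigma(t)=(x,y+t e_i)$ stays on the fiber and has velocity $\partial_{y^i}$, showing directly $\partial_{y^i}\in V(\theta)$ — but the coordinate computation together with the rank--nullity count is the cleanest route and is the one I would present.
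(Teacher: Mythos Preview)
Your argument is correct and is the most direct route to the lemma as stated: compute $\der\pi$ in the induced coordinates, note that it kills each $\partial_{y^i}$ and is surjective, and finish by rank--nullity.

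The paper proceeds a bit differently. It also first checks $\der\pi(\theta)\partial_{y^i}=0$ (phrased via derivations rather than the coordinate projection, but this is the same content). Instead of a dimension count, however, it then computes $K_\theta\partial_{y^i}=\partial_{x^i}$ using the curve $\sigma(t)=(x,v+t\,\partial_{x^i})$ and the definition of $K_\theta$, and appeals to the earlier fact (exercise~\ref{ex:V-proj-bij}) that $K_\theta|_{V(\theta)}\colon V(\theta)\to T_xM$ is a bijection; since the $\partial_{y^i}$ are sent to a basis of $T_xM$, they must form a basis of $V(\theta)$. Your approach is cleaner for proving the lemma itself, but the paper's route has the advantage of simultaneously establishing the explicit identification $K_\theta\partial_{y^i}=\partial_{x^i}$, which is exactly what is used a few paragraphs later to write $\xi^v$ in coordinates and to express the Sasaki metric and the vertical gradient. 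So nothing is missing from your proof, but be aware that the extra fact $K_\theta\partial_{y^i}=\partial_{x^i}$ is needed downstream and would have to be supplied separately.
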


\begin{proof}
Recall definition~\ref{def:V-theta} and exercise~\ref{ex:H-proj-bij}.
The claim of the lemma follows from $\der\pi(\theta)\partial_{y^i}=0$ (so that the vectors are in the right space) and $K_\theta\partial_{y ^k}=\partial_{x^k}$ (so that the isomorphism maps them to a known basis).

To show the first property, consider $\der\pi(\theta)\partial_{y^i}\in T_xM$ as a derivation.
To that end, let $f\colon M\to\R$ be smooth.
We have
\begin{equation}
\begin{split}
\der\pi(\theta)\partial_{y^i}f|_x
&=
\der f(x)[\der\pi(\theta)\partial_{y^i}]
\\&=
\der(f\circ\pi)(\theta)\partial_{y^i}
\\&=
\partial_{y^i}(f\circ\pi)|_\theta.
\end{split}
\end{equation}
The function $f\circ\pi\colon TM\to\R$ is constant on fibers, so $\partial_{y^i}(f\circ\pi)=0$.

Let us then move to the second claim.
To use the definition (or defining property) of $K_\theta$, we need a curve $\sigma(t)$ on $TM$ for which $\sigma(0)=\theta$ and $\dot\sigma(0)=\partial_{y^i}$.
In local coordinates this can be achieved with $\sigma(t)=(\gamma(t),\Sigma(t))=(x,v+t\partial_{x^i})$.
This curve stays on the fiber $T_xM$ and its time derivative is the $i$th basis vector on $T_xM$.
Since $\dot\gamma=0$, the covariant derivative is simply
\begin{equation}
D_t\Sigma(t)|_{t=0}
=
\partial_t(v+t\partial_{x^i})|_{t=0}
=
\partial_{x^i}
\end{equation}
as required.
\end{proof}

Let us define new vector fields $\delta_{x^i}=\partial_{x^i}-\Cs jiky^k\partial_{y^j}$.

\begin{lemma}
\label{lma:H-basis}
The vector fields $\delta_{x^i}$ are a basis for $H(\theta)$.
\end{lemma}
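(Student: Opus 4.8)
The plan is to copy the structure of the proof of Lemma~\ref{lma:V-basis}. By definition~\ref{def:H-theta} we have $H(\theta)=\ker(K_\theta)$, and by exercise~\ref{ex:H-proj-bij} the restriction $\der\pi(\theta)|_{H(\theta)}\colon H(\theta)\to T_xM$ is a linear bijection; in particular $\dim H(\theta)=n$. So it is enough to check two things about the $n$ vectors $\delta_{x^i}\in T_\theta TM$: that $K_\theta\delta_{x^i}=0$, so that they lie in $H(\theta)$; and that $\der\pi(\theta)\delta_{x^i}=\partial_{x^i}$, so that they are carried to the standard basis of $T_xM$ and are therefore linearly independent. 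These two facts force $\{\delta_{x^i}\}_{i=1}^n$ to be a basis of $H(\theta)$.

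The second fact is immediate from the computations already made in the proof of Lemma~\ref{lma:V-basis}: there $\der\pi(\theta)\partial_{y^j}=0$ because $f\circ\pi$ is constant on fibers, and the same kind of argument gives $\der\pi(\theta)\partial_{x^i}=\partial_{x^i}$ because $f\circ\pi$ in the coordinates $(x,y)$ does not involve $y$. Hence
\[
\der\pi(\theta)\delta_{x^i}
=
\der\pi(\theta)\partial_{x^i}-\Cs jik y^k\,\der\pi(\theta)\partial_{y^j}
=
\partial_{x^i}.
\]

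The real content is the identity $K_\theta\delta_{x^i}=0$. To use the defining property of $K_\theta$ I would exhibit a curve $\sigma(t)=(\gamma(t),\Sigma(t))$ on $TM$ with $\sigma(0)=\theta$ and $\dot\sigma(0)=\delta_{x^i}$; then $K_\theta\delta_{x^i}=D_t\Sigma(0)$, the covariant derivative of $\Sigma$ along $\gamma$. A convenient choice is to let $\gamma$ be the $i$th coordinate line through $x$, so $\gamma(0)=x$ and $\dot\gamma^l(0)=\delta^l_i$, and to set $\Sigma^j(t)=y^j-t\,\Cs jik(x)\,y^k$, so $\Sigma(0)=y$. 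Reading off the induced coordinates $(x,y,X,Y)$ on $TTM$,
\[
\dot\sigma(0)
=
\dot\gamma^l(0)\,\partial_{x^l}+\dot\Sigma^j(0)\,\partial_{y^j}
=
\partial_{x^i}-\Cs jik y^k\,\partial_{y^j}
=
\delta_{x^i},
\]
as wanted, and the coordinate formula $(D_t\Sigma)^l=\dot\Sigma^l+\Cs ljk\dot\gamma^j\Sigma^k$ evaluated at $t=0$ gives $(D_t\Sigma)^l(0)=-\Cs lik(x)y^k+\Cs lik(x)y^k=0$. Thus $K_\theta\delta_{x^i}=0$. Since $K_\theta$ is well defined, any other curve realizing $\delta_{x^i}$ gives the same answer; in particular one may take $\Sigma$ to be the honest parallel transport of $y$ along $\gamma$, in which case $\sigma$ is the curve $\sigma_w^v$ from the definition of the horizontal lift $L_\theta$ with $w=\partial_{x^i}$ and $D_t\Sigma\equiv0$. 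This identifies $\delta_{x^i}=L_\theta\partial_{x^i}$, and the lemma then also follows from $H(\theta)=\im(L_\theta)$ together with the injectivity of $L_\theta$ implied by $\der\pi(\theta)\circ L_\theta=\id$.

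The only step that takes genuine care is the middle one: writing down an explicit curve whose velocity at $t=0$ is exactly $\delta_{x^i}$, and then watching the two Christoffel terms cancel in $D_t\Sigma(0)$. Everything else is bookkeeping with facts already in hand about $\der\pi(\theta)$, $K_\theta$, $L_\theta$, and $H(\theta)$.
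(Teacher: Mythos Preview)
Your proof is correct and follows exactly the approach the paper takes: the paper's proof consists of nothing more than the two exercises asking you to show $K_\theta\delta_{x^i}=0$ and $\der\pi(\theta)\delta_{x^i}=\partial_{x^i}$, and you have filled both in carefully, including the key cancellation of Christoffel terms in $D_t\Sigma(0)$. Your additional remark identifying $\delta_{x^i}=L_\theta\partial_{x^i}$ is a nice bonus that ties the computation back to the horizontal lift.
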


The proof consists of two steps:

\begin{ex}
Prove that $K_\theta\delta_{x^i}=0$.
\end{ex}

\begin{ex}
Prove that $\der\pi(\theta)\delta_{x^i}=\partial_{x^i}$.
\end{ex}

Using the bases given above, any vector $\xi\in T_\theta TM$ can be written as
\begin{equation}
\xi
=
X^i\delta_{x^i}
+
Y^i\partial_{y^i}
\end{equation}
and the horizontal and vertical components are
\begin{equation}
\xi^h
=
\der\pi(\theta)\xi
=
X^i\partial_{x^i}
\end{equation}
and
\begin{equation}
\xi^v
=
K_\theta\xi
=
Y^i\partial_{x^i}.
\end{equation}
The components stay the same but the basis changes, as one might expect of a natural isomorphism.

The inner product in the Sasaki metric between two vectors $\xi,\tilde\xi\in T_\theta TM$ expressed like so is given by
\begin{equation}
\begin{split}
\ip{\xi}{\tilde\xi}
&=
\ip{\der\pi(\theta)\xi}{\der\pi(\theta)\tilde\xi}
+
\ip{K_\theta\xi}{K_\theta\tilde\xi}
\\&=
\ip{X^i\partial_{x^i}}{\tilde X^j\partial_{x^j}}
+
\ip{Y^i\partial_{x^i}}{\tilde Y^j\partial_{x^j}}
\\&=
X^i\tilde X^j\ip{\partial_{x^i}}{\partial_{x^j}}
+
Y^i\tilde Y^j\ip{\partial_{x^i}}{\partial_{x^j}}
\\&=
X^i\tilde X^jg_{ij}
+
Y^i\tilde Y^jg_{ij}.
\end{split}
\end{equation}
This basis makes the structure of the Sasaki metric more transparent.

Let us then consider what happens on the dual side.

\begin{ex}
\label{ex:dual-basis-change}
Let $e_1,\dots,e_k$ be a basis of a vector space.
Suppose another basis is given by $f_i=\sum_jA_{ij}e_j$.
If the dual basis of the original one is given by $e_i^*$, the new dual basis is of the form $f_i^*=\sum_jB_{ij}e_j^*$.
Show that the matrix $B$ is the inverse transpose of $A$.
\end{ex}

The change of basis from $\partial_{x^i}$ and $\partial_{y^i}$ to $\delta_{x^i}$ and $\partial_{y^i}$ is given by a matrix of the form
\begin{equation}
A
=
\begin{pmatrix}
I&-G\\
0&I
\end{pmatrix},
\end{equation}
where $G_{i}^{\phantom{i}j}=\Cs jiky^k$.
Therefore the change of basis for the dual basis is given by the matrix
\begin{equation}
B
=
\begin{pmatrix}
I&0\\
G^T&I
\end{pmatrix}.
\end{equation}
That is, the dual basis is given by $\der x^i$ and $\delta y^i=\der y^i+\Cs ijky^k\der x^j$.

\begin{ex}
This requires that
\begin{equation}
\begin{split}
\der x^i(\delta_{x^j})
&=
\delta^i_j,
\\
\der x^i(\partial_{y^j})
&=
0,
\\
\delta y^i(\delta_{x^j})
&=
0,\quad\text{and}
\\
\delta y^i(\partial_{y^j})
&=
\delta^i_j.
\end{split}
\end{equation}
This ensures that we have indeed a dual basis to $\delta_{x^i}$ and $\partial_{y^i}$.
This follows from the general observation of exercise~\ref{ex:dual-basis-change} and the considerations after it, but it is worthwhile to verify by hand.
\end{ex}

Observe that we needed to fix the base components $\partial_{x^i}$ to get a nice basis for $T_\theta TM$ but the fiber components $\der y^i$ to get a nice basis on $T_\theta^*TM$.

If needed, these new basis vectors can be used to span the cohorizontal and covertical subspaces of $T_\theta^*TM$.
This will rarely be needed, as the Sasaki metric gives a way to identify $T_\theta^*TM$ with $T_\theta TM$.

\qa

\section{The geodesic flow}

\subsection{Smooth dynamical systems}
\label{sec:dyn-sys}

A smooth dynamical system or a flow on a smooth manifold $N$ is a smooth action of the group $(\R,{+})$ on the diffeomorphism group of $N$.
More concretely, it is a family of smooth maps $\phi_t\colon N\to N$ so that
\begin{itemize}
\item
$\phi_t$ depends smoothly on $t$,
\item
$\phi_0=\id$, and
\item
$\phi_t\circ\phi_s=\phi_{t+s}$.
\end{itemize}
We often speak of such systems so that a point $x\in N$ flows to the point $\phi_t(x)\in N$ in time $t$.
The curves $t\mapsto\phi_t(x)$ are called trajectories.

\begin{ex}
\label{ex:union-traject}
Show that $N$ is a disjoint union of trajectories.
\end{ex}

\begin{ex}
Show that each $\phi_t\colon N\to N$ is a diffeomorphism.
\end{ex}

The dynamical system gives rise to a vector field $G$ on $N$.
It can be defined as a velocity of a trajectory (vector as the velocity of a curve) or as a differential operator (vector as a derivation).
The first point of view says that
\begin{equation}
G(x)
=
\partial_t\phi(x)|_{t=0}.
\end{equation}
Taking the second point of view, we can differentiate $f\colon N\to\R$ along the flow by
\begin{equation}
Gf(x)
=
\partial_tf(\phi_t(x))|_{t=0}.
\end{equation}
As the derivative can be written as $\der f(x)(G)$, the two descriptions agree.

\begin{ex}
Show that a trajectory of the flow $\phi_t$ is an integral curve of $G$.
That is, show that a trajectory $\sigma(t)=\phi_t(x)$ satisfies $\dot\sigma(t)=G(\sigma(t))$.
\end{ex}

The exercise shows that the vector field $G$ determines the flow uniquely.
Therefore $G$ is called the generator of the flow.

\subsection{The geodesic flow}

We defined the geodesic vector field in~\eqref{eq:X-def}.
It is a vector field on $TM$ and therefore a section of $TTM$.

\begin{definition}
The geodesic flow is the flow on the tangent bundle $TM$ of a Riemannian manifold $M$ generated by the geodesic vector field $X$.
\end{definition}

We saw in exercise~\ref{ex:integral-curve-geodesic} that trajectories of the geodesic flow are exactly the lifts of geodesics.

We should hurry to mention that this definition only makes sense as is if $M$ is geodesically complete.
Otherwise some geodesics are not defined for all times.
If $M$ is incomplete, the geodesic flow $\phi\colon\R\times TM\to TM$ is only defined on some open subset of $\R\times TM$.
All our considerations will be local, so it does not matter whether the flow is globally defined or not.
To keep things simple, we assume $M$ to be geodesically complete, but the assumption is unimportant.

If $(x,v)\in TM$ and $t\in\R$, then $\phi_t(x,v)=(\gamma_{x,v}(t),\dot\gamma_{x,v}(t))$ gives the position and direction of the geodesic starting at $(x,v)$ after time $t$.
Exercise~\ref{ex:exp-smooth} proves the smoothness of the geodesic flow, although in that context we only argued that exponential maps are smooth.

On $T\R^n=\R^n\times\R^n$ the geodesic flow is given simply by $\phi_t(x,v)=(x+tv,v)$.

\begin{ex}
Like any vector field on $TM$, the geodesic vector field can be decomposed to horizontal and vertical components.
Verify that at $(x,v)$ we have $X_h=v$ and $X_v=0$.

That is, the geodesic flow heuristically changes the point on the base but keeps the direction fixed.
This corresponds to geodesics parallel transporting their velocity.
\end{ex}

\begin{iex}
If we want to encode all geodesics on $M$ into a dynamical system, why does it have to be a system over $TM$ instead of just $M$?
\end{iex}

\subsection{The differential of the geodesic flow}

The geodesic flow is a smooth map $\phi\colon\R\times TM\to TM$, and for each $t\in\R$ the map $\phi_t\colon TM\to TM$ is a diffeomorphism.
The time derivative is given by the geodesic vector field.
Let us therefore study the derivative of $\phi_t$ for a fixed $t\in\R$.

To this end, consider a smooth curve $\sigma\colon(-\eps,\eps)\to TM$ through $\theta\in TM$.
We would like to find $\partial_s\phi_t(\sigma(s))|_{s=0}$ in terms of $\sigma'(0)$.
The mapping from the latter to the former is $\der\phi_t(\theta)\colon T_\theta TM\to T_{\phi_t(\theta)}TM$.

For each $s\in(-\eps,\eps)$ the curve $t\mapsto\phi_t(\sigma(s))$ is the lift of a geodesic.
Therefore $\Gamma(t,s)=\pi(\phi_t(\sigma(s)))$ is a family of geodesics.
Thus we are led to study the Jacobi field $J(t)=\partial_s\Gamma(t,s)|_{s=0}$ along $\gamma(t)=\Gamma(t,0)$.

If we denote $\sigma'(s)=\partial_s\sigma(s)$ (so that the dot refers to a derivative in $t$ but not in $s$), we have
\begin{equation}
\begin{split}
J(t)
&=
\partial_s\Gamma(t,s)|_{s=0}
\\&=
\partial_s\pi(\phi_t(\sigma(s)))|_{s=0}
\\&=
\der\pi(\phi_t\sigma(0))\der\phi_t(\sigma(0))\sigma'(0)
\\&=
[\der\phi_t(\theta)\sigma'(0)]_h.
\end{split}
\end{equation}
The Jacobi field gives the horizontal part of the differential.

Let us then find the covariant derivative of this Jacobi field.
To that end, we write $\phi_t(\sigma(s))=(\alpha(t,s),\beta(t,s))\in TM$.
We find
\begin{equation}
\label{eq:chain-4}
\begin{split}
D_tJ(t)
&=
D_t\partial_s\Gamma(t,s)|_{s=0}
\\&\stackrel{\text(a)}{=}
D_s\partial_t\Gamma(t,s)|_{s=0}
\\&\stackrel{\text(b)}{=}
D_s\partial_t\pi(\phi_t(\sigma(s)))|_{s=0}
\\&\stackrel{\text(c)}{=}
D_s\partial_t\alpha(t,s)|_{s=0}
\\&\stackrel{\text(d)}{=}
D_s\beta(t,s)|_{s=0}
\\&\stackrel{\text(e)}{=}
K_{\phi_t(\sigma(0))}\partial_s\phi_t(\sigma(s))|_{s=0}
\\&\stackrel{\text(f)}{=}
K_{\phi_t(\theta)}\der\phi_t(\sigma(0))\sigma'(0)
\\&\stackrel{\text(g)}{=}
[\der\phi_t(\theta)\sigma'(0)]_v.
\end{split}
\end{equation}
That is, the covariant derivative of the Jacobi field gives the vertical part of the differential.

\begin{ex}
Explain the named steps in~\eqref{eq:chain-4}.
\end{ex}

To get the initial conditions of the Jacobi field, we study what happens at $t=0$.
There $\phi_0=\id$ and $\der\phi_0=\id$, so $J(0)=[\sigma'(0)]_h$ and $D_tJ(0)=[\sigma'(0)]_v$.

\begin{theorem}
\label{thm:d-flow}
Consider the differential of $\phi_t$ at $\theta\in TM$.
Choose any $\xi\in T_\theta TM$ and denote $\eta\coloneqq\der\phi_t(\theta)\xi\in T_{\phi_t(\theta)}TM$.
If these are decomposed in horizontal and vertical parts as
$\xi=(\xi_h,\xi_v)$
and
$\eta=(\eta_h,\eta_v)$,
then
\begin{equation}
\begin{split}
\eta_h
&=
J_\xi(t)
\quad\text{and}\\
\eta_v
&=
D_tJ_\xi(t),
\end{split}
\end{equation}
where $J_\xi$ is the Jacobi field along the geodesic $t\mapsto\pi(\phi_t(\theta))$ with initial conditions
\begin{equation}
\begin{split}
J_\xi(0)
&=
\xi_h
\quad\text{and}\\
D_tJ_\xi(0)
&=
\xi_v.
\end{split}
\end{equation}
\end{theorem}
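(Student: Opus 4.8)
The plan is to assemble the theorem entirely from the computations already carried out in the text preceding the statement: the identification of the horizontal part of $\der\phi_t(\theta)\xi$ with a Jacobi field, the identification of its vertical part with the covariant derivative of that Jacobi field, and the reading off of the initial conditions at $t=0$. First I would fix $\theta=(x,v)\in TM$, fix $t\in\R$, and choose an arbitrary $\xi\in T_\theta TM$. To use the curve-based description of the differential, I pick any smooth curve $\sigma\colon(-\eps,\eps)\to TM$ with $\sigma(0)=\theta$ and $\sigma'(0)=\xi$ (the prime denoting $\partial_s$, since $t$ is reserved for the flow parameter). Then $\der\phi_t(\theta)\xi=\partial_s\phi_t(\sigma(s))|_{s=0}$, which depends only on $\xi=\sigma'(0)$ and not on the choice of $\sigma$.

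Next I would set $\Gamma(t,s)=\pi(\phi_t(\sigma(s)))$. Since for each fixed $s$ the curve $t\mapsto\phi_t(\sigma(s))$ is the lift of a geodesic (exercise~\ref{ex:integral-curve-geodesic}), $\Gamma$ is a smooth family of geodesics, so by theorem~\ref{thm:JF=geodetic-variation} the variation field $J_\xi(t)\coloneqq\partial_s\Gamma(t,s)|_{s=0}$ is a Jacobi field along $\gamma(t)=\Gamma(t,0)=\pi(\phi_t(\theta))$. The chain of equalities in the text shows that $J_\xi(t)=[\der\phi_t(\theta)\xi]_h=\der\pi(\phi_t(\theta))\der\phi_t(\theta)\xi$, using that the horizontal part of a vector in $T_\bullet TM$ is exactly its image under $\der\pi$. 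For the vertical part I would invoke~\eqref{eq:chain-4}: writing $\phi_t(\sigma(s))=(\alpha(t,s),\beta(t,s))$, one commutes $D_t$ and $\partial_s$ by lemma~\ref{lma:cd-commute-1}, recognizes $\partial_t\alpha=\partial_t\pi(\phi_t(\sigma(s)))$ and that $\partial_t\alpha(t,s)=\beta(t,s)$ because $\phi_t$ is the lift of a geodesic (the $x$-component's $t$-derivative is the $y$-component), and then recognizes $D_s\beta|_{s=0}=K_{\phi_t(\theta)}\der\phi_t(\theta)\xi=[\der\phi_t(\theta)\xi]_v$ from the defining property of the connection map $K$. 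This gives $D_tJ_\xi(t)=\eta_v$.

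Finally I would evaluate at $t=0$. Since $\phi_0=\id_{TM}$, we have $\der\phi_0(\theta)=\id_{T_\theta TM}$, so the horizontal and vertical parts of $\der\phi_0(\theta)\xi=\xi$ are just $\xi_h$ and $\xi_v$; combined with the two identities above at $t=0$ this yields $J_\xi(0)=\xi_h$ and $D_tJ_\xi(0)=\xi_v$. By exercise~\ref{ex:JF-unique} these initial data determine $J_\xi$ uniquely, so the statement is consistent and complete. The main obstacle is not any single hard estimate but rather the careful bookkeeping in step (d) of~\eqref{eq:chain-4}: one must be sure that $\partial_t\alpha(t,s)=\beta(t,s)$ as vector fields along the curve $s\mapsto\alpha(t,s)$ for each fixed $t$, which is precisely the statement that $\phi_t$ maps $\theta$ to the pair (geodesic position, geodesic velocity), together with the compatibility of the covariant derivative $D_s$ with this identification; once that is pinned down, everything else is an application of results already established.
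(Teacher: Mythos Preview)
Your proposal is correct and follows essentially the same route as the paper: the discussion immediately preceding the theorem already carries out the three computations (horizontal part via $\der\pi$, vertical part via the chain~\eqref{eq:chain-4}, initial conditions via $\phi_0=\id$), and the paper leaves the assembly of these into a proof as an exercise---which is exactly what you have done. Your remark about the bookkeeping in step~(d) of~\eqref{eq:chain-4}, namely that $\partial_t\alpha=\beta$ because the geodesic flow pairs position with velocity, is the right point to flag.
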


\begin{ex}
Prove theorem~\ref{thm:d-flow}.
\end{ex}

Jacobi fields describe perturbations in position (horizontal), whereas their covariant derivatives desribe perturbations in direction (vertical).

If we write the tangent space as $H\oplus V$ at both $\theta$ and $\phi_t(\theta)$, the differential $\der\phi_t\colon T_\theta TM\to T_{\phi_t(\theta)}TM$ of theorem~\ref{thm:d-flow} can be written in block form as
\begin{equation}
\label{eq:d-flow-block}
\der\phi_t(\theta)
=
\begin{pmatrix}
A_{hh} & A_{hv} \\
A_{vh} & A_{vv}
\end{pmatrix},
\end{equation}
where
\begin{equation}
\begin{split}
A_{hh}
&\colon
H(\theta)\to H(\phi_t(\theta)),
\\
A_{hv}
&\colon
V(\theta)\to H(\phi_t(\theta)),
\\
A_{vh}
&\colon
H(\theta)\to V(\phi_t(\theta)),
\quad\text{and}
\\
A_{vv}
&\colon
V(\theta)\to V(\phi_t(\theta))
\end{split}
\end{equation}
are linear maps.
That is,
\begin{equation}
\begin{pmatrix}
\eta_h\\
\eta_v
\end{pmatrix}
=
\begin{pmatrix}
A_{hh} & A_{hv} \\
A_{vh} & A_{vv}
\end{pmatrix}
\begin{pmatrix}
\xi_h\\
\xi_v
\end{pmatrix}.
\end{equation}

\subsection{The exponential map}
\label{sec:exp-flow}

Let us return to the exponential map from section~\ref{sec:exp} and see it from the point of view of the geodesic flow.
The geodesic flow contains the lifts of all geodesics for all times.
The exponential map only contains the geodesics starting from a single point.

\begin{iex}
Show that $\exp_x=\pi\circ\phi_1|_{T_xM}$.
\nts{Diagram?}
\end{iex}

One could say that the exponential map maps directions to points.
Indeed, its differential is indeed a vertical-to-horizontal map.

\begin{ex}
Consider the block structure of $\der\phi_t(\theta)$ at $\theta=(x,v)$ given in~\eqref{eq:d-flow-block}.
Show that $\der\exp_x(v)=A_{hv}$ when one identifies the horizontal and vertical fibers with tangent spaces of $M$ in the canonical way.
\end{ex}

In light of exercise~\ref{ex:cp-char}, the points $\pi(\theta)$ and $\pi(\phi_1(\theta))$ are conjugate along the geodesic $t\mapsto\pi(\phi_t(\theta))$ if and only if $A_{hv}$ is singular.
The whole block matrix of~\eqref{eq:d-flow-block} is always invertible because $\phi_t$ is a diffeomorphism, but the individual blocks can fail to be invertible.

\subsection{The flow on the sphere bundle}

Recall that the unit sphere bundle is the set of those $(x,v)\in TM$ for which $\abs{v}=1$.

Because the speed of a geodesic is constant, the geodesic flow preserves the norm of a vector.
Therefore we may restrict the diffeomorphism
\begin{equation}
\phi_t\colon TM\to TM
\end{equation}
to
\begin{equation}
\phi_t\colon SM\to SM.
\end{equation}
This is a dynamical system on the sphere bundle $SM$, and its generator is still called the geodesic vector field although it is a slightly different object due to the different ambient manifold.
If clarity is required, we will decorate objects with ``$TM$'' or ``$SM$''.

Since all geodesics have unit speed in the flow on $SM$, we miss some directions on the bundle.
The only direction missing on $T_{(x,v)}TM$ is the vertical direction of $v$.
Indeed, we have
\begin{equation}
TSM
=
\{((x,v),\xi)\in TTM;\abs{v}=1,\xi_v\perp v\}.
\end{equation}
The missing direction corresponds to reparametrizations of geodesics, so no geometric information is lost in studying the flow on $SM$.

Consequently, the Jacobi field $t\dot\gamma(t)$ does not appear in the differential of the geodesic flow on $SM$.
We can also further restrict directions so that $\dot\gamma(t)$ does not appear either.
After we have done this in the next section, all Jacobi fields are normal.

\qa

\section{Derivatives on the unit sphere bundle}


\subsection{Horizontal and vertical bundles on $SM$}

The sphere bundle is a level set of the function $f\colon TM\to\R$, $f(x,y)=g_{ij}(x)y^iy^j$.
Given $\theta=(x,y)\in TM$ and $\eta\in T_\theta TM$, let us compute $\der f(\theta)\eta$.
Take any curve $\alpha(t)=(a(t),A(t))$ on $TM$ so that $\alpha(0)=\theta$ and $\dot\alpha(0)=\eta$.
Now $f(\alpha(t))=g_{ij}(a(t))A^i(t)A^j(t)=\ip{A(t)}{A(t)}$ and  using the covariant derivative along $a$ gives $\partial_tf(\alpha(t))=2\ip{A(t)}{D_tA(t)}$.

Let us write $\eta$ in the new basis we found in section~\ref{sec:HV-coords}:

\begin{lemma}
\label{lma:velocity-HV}
Let $\alpha(t)=(a(t),A(t))$ be a curve on $TM$.
Its derivative is
\begin{equation}
\dot\alpha
=
\dot a^i\partial_{x^i}
+
\dot A^i\partial_{y^i}
=
\dot a^i\delta_{x^i}
+
(D_tA)^i\partial_{y^i},
\end{equation}
where $D_t$ is the covariant derivative along $a$.
\end{lemma}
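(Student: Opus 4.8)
The plan is to verify the two asserted equalities in turn, both by a direct computation in the coordinates $(x,y)$ that the chart $\phi$ induces on $TU$.

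First I would handle the equality $\dot\alpha=\dot a^i\partial_{x^i}+\dot A^i\partial_{y^i}$. In the induced coordinates the curve $\alpha$ has component functions $\der x^i(\alpha(t))=a^i(t)$ and $\der y^i(\alpha(t))=A^i(t)$, so by the very definition of the velocity of a curve expressed in a coordinate basis, $\dot\alpha(t)=\dot a^i(t)\partial_{x^i}+\dot A^i(t)\partial_{y^i}$. This step is immediate.

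For the second equality I would substitute the definition $\delta_{x^i}=\partial_{x^i}-\Cs jiky^k\partial_{y^j}$. Restricting this globally defined vector field to the curve means replacing the fiber variable $y$ by $A(t)$ and evaluating the Christoffel symbols at $a(t)$, so
\[
\dot a^i\delta_{x^i}\big|_{\alpha(t)}
=
\dot a^i\partial_{x^i}-\Cs jik(a)\,\dot a^iA^k\,\partial_{y^j}.
\]
On the other hand the covariant derivative of the vector field $A$ along the curve $a$ has components $(D_tA)^j=\dot A^j+\Cs jkl(a)\,A^k\dot a^l$, hence
\[
(D_tA)^j\partial_{y^j}
=
\dot A^j\partial_{y^j}+\Cs jkl(a)\,A^k\dot a^l\,\partial_{y^j}.
\]
Adding the two displays and relabelling the contracted index $l\to i$ in the last term, the correction terms cancel because $\Cs jik$ is symmetric in its two lower indices; what remains is $\dot a^i\partial_{x^i}+\dot A^j\partial_{y^j}$, which is exactly the middle expression already obtained. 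This chains the two equalities together and proves the lemma.

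I do not expect any genuine obstacle here: the computation is pure index bookkeeping. The only points that require a little care are that, when restricting $\delta_{x^i}$ to $\alpha$, the fiber coordinate must be set equal to $A(t)$, and that matching the two Christoffel contributions uses the symmetry $\Cs jik=\Cs jki$ together with a relabelling of the summed index. Once these are noted, the cancellation is mechanical.
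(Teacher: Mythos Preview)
Your argument is correct and is exactly the direct coordinate computation the paper has in mind (the paper leaves this lemma as an exercise, so there is no alternative proof to compare against). One tiny remark: with the paper's index conventions $(D_tA)^j=\dot A^j+\Cs jik\dot a^iA^k$ and $\delta_{x^i}=\partial_{x^i}-\Cs jik y^k\partial_{y^j}$, the two Christoffel terms cancel on the nose after a single index relabelling, so you do not actually need to invoke the symmetry $\Cs jik=\Cs jki$; your appeal to it is harmless but unnecessary.
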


\begin{ex}
Prove the lemma.
\end{ex}

If we write our $\eta\in T_\theta TM$ as
\begin{equation}
\eta
=
X^i\delta_{x^i}
+
Y^i\partial_{y^i},
\end{equation}
lemma~\ref{lma:velocity-HV} gives us $\der f(\theta)\eta=\partial_tf(\alpha(t))|_{t=0}=2g_{ij}(x)y^iY^j$.

Because $SM=f^{-1}(1)\subset TM$ is a level set of $f$, we have
\begin{equation}
TSM
=
\{
X^i\delta_{x^i}
+
Y^i\partial_{y^i}
\in TTM;
(x,y)\in SM,
g_{ij}(x)y^iY^j=0
\}.
\end{equation}
There is no restriction in $X$ but $Y$ cannot have anything in the direction of $y$.
We remedy this asymmetry with the following definition using the decomposition on $TTM$.

\begin{definition}
Let $\theta=(x,y)\in SM$.
We define the horizontal and vertical fibers of $TSM$ at $\theta$ to be
\begin{equation}
H^{SM}(\theta)
=
\{X^i\delta_{x^i}\in TSM;g_{ij}(x)y^iX^j=0\}
\end{equation}
and
\begin{equation}
V^{SM}(\theta)
=
T_\theta SM
\cap
V^{TM}(\theta).
\end{equation}
\end{definition}

In the definition of $H^{SM}(\theta)$ we set $Y^i=0$.
This ensures that the vector is horizontal; see lemmas~\ref{lma:V-basis} and~\ref{lma:H-basis}.
Similarly, in the vertical fiber we have $X^i=0$, so the point on $TSM$ can be written as $Y^i\partial_{y^i}$ with the constraint $g_{ij}(x)y^iY^j=0$.

In words:
\begin{itemize}
\item
At $\theta=(x,v)\in SM$ the horizontal subspace $H^{SM}(\theta)\subset T_\theta SM$ consists of vectors that are purely horizontal and whose horizontal component (a vector on $T_{x}M$) is orthogonal to $v$.
\item
At $\theta=(x,v)\in SM$ the vertical subspace $V^{SM}(\theta)\subset T_\theta SM$ consists of vectors that are purely vertical and whose vertical component (a vector on $T_{x}M$) is orthogonal to $v$.
\end{itemize}

\begin{ex}
We took out one direction from $H^{SM}$, and that is the direction of the geodesic vector field $X$.
Show that this one-dimensional subspace is all that is not horizontal or vertical, that is,
\begin{equation}
\label{eq:TSM=HVX1}
T_\theta SM
=
H^{SM}(\theta)
\oplus
V^{SM}(\theta)
\oplus
\R X(\theta).
\end{equation}
This gives a decomposition of $TSM$ into horizontal, vertical, and geodesic directions.
\end{ex}

Let $N$ be a bundle over $SM$ whose fiber at $(x,y)\in SM$ is
\begin{equation}
N_{(x,y)}
=
\{
v\in T_xM;
\ip{v}{y}=0
\}.
\end{equation}
This bundle gives us a way to formalize tangent spaces of $T_xM$ with the direction of $y$ removed.

The sphere $S_xM$ is a manifold, and so it has a tangent space at every point.
It is well justified to think that $N_{(x,y)}=T_yS_xM$.

\begin{ex}
\label{ex:TSM-K}
Let $\theta\in SM$.
Recall that $T_\theta SM\subset T_\theta TM$.
Show that $K_\theta\colon V^{SM}(\theta)\to N_\theta$ is a linear bijection.
\end{ex}

Because $T_\theta SM\subset T_\theta TM$, the Sasaki metric gives an inner product on $T_\theta SM$.
This is the Riemannian metric the submanifold $SM$ inherits from $TM$.

\begin{ex}
\label{ex:H=H+X}
We defined things so that $H^{TM}(\theta)=H^{SM}(\theta)\oplus\R X(\theta)$.
Show that this direct sum is orthogonal.
Is the decomposition~\eqref{eq:TSM=HVX1} orthogonal as well?
\end{ex}

As in exercise~\ref{ex:TSM-K}, $\der\pi(\theta)\colon H^{SM}(\theta)\to N_\theta$ is a linear bijection.
The Sasaki metric was defined so that our maps $V^{SM}(\theta)\to N_\theta$ and $H^{SM}(\theta)\to N_\theta$ are isometries.

\begin{remark}
\label{rmk:N-NVF}
If we have a section $W$ of the bundle $N$ and a unit speed geodesic $\gamma$, we get a natural normal vector field along $\gamma$ as follows.
The lift of $\gamma$ is the curve $(\gamma,\dot\gamma)$ on $SM$.
At every point $W_\gamma(t)\coloneqq W(\gamma(t),\dot\gamma(t))\in T_{\gamma(t)}M$.
Because the fiber $N_{\gamma(t),\dot\gamma(t)}$ is the orthogonal complement of $\dot\gamma(t)$, the vector field $W_\gamma(t)$ is indeed orthogonal to $\dot\gamma(t)$.
\end{remark}

\subsection{Horizontal and vertical gradients on $SM$}


As we will now work mostly on the sphere bundle, let us drop the decorations and write $H^{SM}(\theta)=H(\theta)$ and $V^{SM}(\theta)=V(\theta)$.

Now that we have a handle on different directions on $SM$, let us differentiate functions.
Consider a function $u\colon SM\to\R$.
As the Sasaki metric makes $SM$ into a Riemannian manifold, $u$ has a gradient $\nabla u(\theta)\in T_\theta SM$ at $\theta\in SM$.
Using the decomposition (see~\eqref{eq:TSM=HVX1})
\begin{equation}
\label{eq:TSM=HVX2}
T_\theta SM
=
H(\theta)
\oplus
V(\theta)
\oplus
\R X(\theta),
\end{equation}
we may decompose the gradient these three parts.
The last element in the decomposition is one-dimensional, so it makes sense to treat the third component of the full gradient as a scalar.

Once we identify $H(\theta)$ and $V(\theta)$ with $N_\theta$, we have
\begin{equation}
T_\theta SM
=
N_\theta
\times
N_\theta
\times
\R.
\end{equation}
In this decomposition
\begin{equation}
\label{eq:SM-grad}
\nabla u(\theta)
=
(
\hd u(\theta)
,
\vd u(\theta)
,
Xu(\theta)
),
\end{equation}
where $\hd u$ and $\vd u$ are the horizontal and vertical gradients of $u$.
Both $\hd u$ and $\vd u$ are sections of the bundle $N$ because at every point $\theta\in SM$ they take values in $N_\theta$.

Let us write these derivatives in terms of coordinates.
On $TM$ we have the basic derivatives $\delta_{x^i}$ and $\partial_{y^i}$.
If we want to express derivatives on $SM$ using these, we need to extend functions from $SM$ to $TM$ to differentiate there.
A natural extension is given by the scaling map $s\colon TM\setminus 0\to SM$, $s(x,y)=(x,y/\abs{y})$.
Now that $u$ is a function on $SM$, the scaled map $u\circ s$ is a smooth function in a neighborhood of $SM$ on $TM$.
We thus define basic derivatives of $u$ as
\begin{equation}
\begin{split}
\delta_iu
&=
\delta_{x^i}(u\circ s)|_{SM}
\quad\text{and}
\\
\partial_iu
&=
\partial_{y^i}(u\circ s)|_{SM}.
\end{split}
\end{equation}
These operators can be used to write the differential of a function.
To get the gradient, we use the musical isomorphisms to write $\delta^i=g^{ij}\delta_j$ and $\partial^i=g^{ij}\partial_j$.

Let us consider the restriction $u_x=u|_{S_xM}$.
The gradient\footnote{The gradient in the inner product space $T_xM$ or its  subset $S_xM$. That is, this is a Euclidean gradient.} of $u_x$ should correspond to $\vd u$.
If we extend $u_x$ to a neighborhood of $S_xM$ in $T_xM$ as $u_x\circ s$ (but evaluate everything on $SM$), then the differential is
\begin{equation}
\der(u_x\circ s)
=
\partial_iu\der y^i.
\end{equation}
The gradient is obtained by musical isomorphism:
\begin{equation}
\nabla(u_x\circ s)
=
\partial^iu\partial_ {y^i}.
\end{equation}
The scaling ensures that the radial derivative of $u_x\circ s$ vanishes, and so the gradient is orthogonal to the radial vector on $TTM$ and the gradient belongs to $V(\theta)$.
Identifying $V(\theta)$ with $N_\theta$ via $K_\theta$, we find the vertical gradient of $u$ to be
\begin{equation}
\vd u
=
\partial^iu\partial_ {x^i}.
\end{equation}
Notice that the natural isomorphism $K_\theta$ changes the basis, not the components.

\begin{ex}
Show that the geodesic vector field operates on $u$ in local coordinates as $Xu(x,v)=v^i\delta_iu(x,v)$ at any $(x,v)\in SM$.
This justifies thinking of the geodesic vector field as ``$X=v\cdot\nabla_x$'', the $x$-derivative in the direction of $v$.
In $\R^n$ we have $S\R^n=\R^n\times S^{n-1}$ and indeed $X=v\cdot\nabla_x$.
\end{ex}

To find the horizontal gradient at $\theta=(x,v)$, we can proceed similarly and differentiate $u\circ s$ on $TM$ using the basis elements $\delta_{x^i}$.
The full horizontal gradient on $H^{TM}(\theta)$ is
\begin{equation}
\delta^iu\delta_{x^i}.
\end{equation}
The component in the direction of $v$ should be projected out, as that is already contained in $Xu$.
Recall exercise~\ref{ex:H=H+X}.
Once we project this geodesic direction out and apply the isomorphism $\der\pi(\theta)\colon H^{SM}(\theta)\to N_\theta$, we find that the horizontal gradient is
\begin{equation}
\hd u
=
(\delta^iu-(Xu)v^i)\partial_{x^i}.
\end{equation}
Now we have found coordinate expressions for the decomposition~\eqref{eq:SM-grad}.

\subsection{Derivatives of sections of $N$}

There is a natural way to integrate on a Riemannian manifold $M$.
The divergence $\dive V$ of a vector field $V$ is defined so that
\begin{equation}
\int_M \ip{V}{\nabla f}
=
-
\int_M f\dive V
\end{equation}
for all smooth compactly supported $f\colon M\to\R$.
In other words, the divergence is the negative formal transpose of the gradient: ``$\dive=-\nabla^T$''.
The divergence is a first order differential operator given in local coordinates as $\dive V=V^i_{\phantom{i};i}$.
It is the trace of the covariant derivative $\nabla V$.

Similarly, we may integrate over the Riemannian manifold $SM$.
The horizontal and vertical divergences $\hdiv u$ and $\vdiv u$ of $u$ are defined similarly through transposes by requiring that
\begin{equation}
\int_{SM}\ip{V}{\hd u}
=
-
\int_{SM} u\hdiv V
\end{equation}
and similarly for $\vdiv$.
The geodesic vector field $X$ is skew-adjoint: $X^T=-X$.

The horizontal and vertical divergences map smooth sections of $N$ into smooth functions on $SM$.

\begin{iex}
The geodesic vector field also operates on sections of $N$.
If $V$ is a section, we define
\begin{equation}
XV(\theta)
=
D_t V(\phi_t(\theta))|_{t=0}.
\end{equation}
This is the same formula as for scalar differentiation, but the derivative is now covariant.
Show that $XV$ is a section of $N$.

It follows that if we restrict $V$ to a normal vector field $V_\gamma$ along $\gamma$ as in remark~\ref{rmk:N-NVF}, then the geodesic vector field corresponds to the covariant derivative along the geodesics.
That is, $(XV)_\gamma=D_tV_\gamma$.
\end{iex}

Unfortunately more complete details are beyond the scope of this course.

\subsection{Commutator relations}

Now that we can differentiate, the question arises whether the various differential operators commute.
This is easiest to study on $TM$ first.
The coordinate derivatives $\partial_{x^i}$ and $\partial_{y^i}$ all commute with each other.

\begin{iex}
Show that $[\delta_{x^i},\partial_{y^j}]=\Cs kij\partial_{y^k}$.
\end{iex}

The commutator $[\delta_{x^i},\delta_{x^j}]$ will involve derivatives of Christoffel symbols.
Alternatively, it can be seen as a commutator of covariant derivatives.
Either way, it should be no surprise that the commutator contains the curvature operator.
Using the commutator relations for the basis elements on $TTM$ allows one to compute the commutators for the various derivatives on $SM$.

Recall the curvature operator along a geodesic $\gamma$ from definition~\ref{def:Rg}.
It is an operator depending on $\gamma(t)$ and $\dot\gamma(t)$ and maps $T_{\gamma(t)}M\to T_{\gamma(t)}M$.
By lemma~\ref{lma:Rg-normal} it also maps $N_{(\gamma(t),\dot\gamma(t))}$ to itself.
Therefore the curvature operator induces a map $R$ that maps sections of $N$ to sections of $N$.

\begin{proposition}
\label{prop:commutators}
The differential operators on $SM$ satisfy the following commutator relations:
\begin{equation}
\begin{split}
[X,\vd]
&=
-\hd,
\\
[X,\hd]
&=
R\vd,
\\
\hdiv\vd-\vdiv\hd
&=
(n-1)X,
\\
[X,\vdiv]
&=
-\hdiv,
\\
[X,\hdiv]
&=
\vdiv R.
\end{split}
\end{equation}
\end{proposition}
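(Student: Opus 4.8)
The plan is to derive the five relations from two direct coordinate computations and one duality argument. I work in a single chart with the adapted frame $\delta_{x^i}=\partial_{x^i}-\Cs jik y^k\partial_{y^j}$, $\partial_{y^i}$ on $TTM$ and its dual coframe $\der x^i,\delta y^i$ from section~\ref{sec:HV-coords}, restricted to $SM$, with the scalar operators $\delta_i,\partial_i$ defined through the scaling map $s$ as in the notes. The structural input is the three commutators on $TTM$: $[\partial_{y^i},\partial_{y^j}]=0$, the relation $[\delta_{x^i},\partial_{y^j}]=\Cs kij\partial_{y^k}$ from the preceding exercise, and $[\delta_{x^i},\delta_{x^j}]=-R^k_{\phantom{k}lij}y^l\partial_{y^k}$, obtained by expanding the $\delta$'s and collecting the first derivatives of the Christoffel symbols exactly as in exercise~\ref{ex:R-components}; the point of the index and sign choices there is that, after restriction to $SM$ and the canonical identifications $K_\theta\colon V^{SM}(\theta)\to N_\theta$ and $\der\pi(\theta)\colon H^{SM}(\theta)\to N_\theta$, this last commutator corresponds to the curvature operator $R$ of definition~\ref{def:Rg} acting on sections of $N$ (its values lie in $N$ by lemma~\ref{lma:Rg-normal}). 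I also use the coordinate formulas $Xu=y^i\delta_iu$, $\vd u=\partial^iu\,\partial_{x^i}$, $\hd u=(\delta^iu-(Xu)y^i)\partial_{x^i}$ derived in the previous subsection, together with $\hdiv=-\hd^{T}$, $\vdiv=-\vd^{T}$ and $X^{T}=-X$ on $L^{2}(SM)$.

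Step one treats the two gradient relations $[X,\vd]=-\hd$ and $[X,\hd]=R\vd$ by direct computation, writing $X=y^k\delta_{x^k}$. In $[X,\vd]u$ there are exactly two sources of non-cancellation: $X$ hitting the vertical derivative of $\vd$ through $[\delta_{x^k},\partial_{y^j}]=\Cs mkj\partial_{y^m}$, and $\vd$ hitting the explicit coefficient $y^k$ inside $X$; collecting these and using $y^k\partial_{y^k}u=0$ on $SM$ (the zero-homogeneity coming from the scaling $s$) assembles precisely $\delta^iu-(Xu)y^i$, hence $-\hd u$. In $[X,\hd]u$ the new feature is that $\hd$ carries the horizontal derivatives $\delta^i$, so $[\delta_{x^k},\delta_{x^i}]$ now contributes the curvature term $-R^m_{\phantom{m}lki}y^ly^k\partial_{y^m}$; under the $N$-identification this is exactly $R$ applied to the vertical gradient $\partial^iu\,\partial_{x^i}$, while the remaining terms cancel against the $-(Xu)y^i$ correction, giving $R\vd u$. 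This step is the bookkeeping of exercise~\ref{ex:R-components} with the care that one projects onto $N$ at each stage and keeps the geodesic direction $\R X$ out of both gradients.

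Step two: the divergence relations $[X,\vdiv]=-\hdiv$ and $[X,\hdiv]=\vdiv R$ are the formal $L^{2}(SM)$-adjoints of step one and cost nothing more. Transposing $[X,\vd]=-\hd$ and using $X^{T}=-X$, $\vd^{T}=-\vdiv$, $\hd^{T}=-\hdiv$ gives $[X,\vd]^{T}=\vd^{T}X^{T}-X^{T}\vd^{T}=\vdiv X-X\vdiv=-[X,\vdiv]$, while the right-hand side gives $(-\hd)^{T}=\hdiv$; hence $[X,\vdiv]=-\hdiv$. Transposing $[X,\hd]=R\vd$ in the same way, and using that $R$ is pointwise symmetric (lemma~\ref{lma:R-symmetric}), so that $R^{T}=R$ on $L^{2}(SM)$, gives $-[X,\hdiv]=(R\vd)^{T}=\vd^{T}R=-\vdiv R$, that is $[X,\hdiv]=\vdiv R$. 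The only point to check is that the integration by parts defining these transposes carries no boundary term, which is clear since the statements are local and one may test against compactly supported sections.

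Step three, and the main obstacle, is the middle relation $\hdiv\vd-\vdiv\hd=(n-1)X$. This is \emph{not} a formal consequence of the gradient identities — transposition only shows that both sides are skew-adjoint — so it needs its own computation. I would expand $\hdiv\vd u$ and $\vdiv\hd u$ in the adapted frame and evaluate at an arbitrary point $x_0$ using normal coordinates centered there, so that $\Cs ijk=0$ and $\partial_k g_{ij}=0$ at $x_0$ (exercise~\ref{ex:CS-normal}). Then the purely second-order parts of the two expressions agree — both reduce to a mixed $\partial_x\partial_y$ derivative, since $[\partial_{x^i},\partial_{y^j}]=0$ — and cancel in the difference; the curvature contributions, which come from the first derivatives of the Christoffel symbols and do survive at $x_0$, cancel as well by the symmetries of $R$. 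What remains is a genuinely asymmetric first-order term produced by the radial factor $(Xu)y^i$ inside $\hd u$ together with the transpose structure of the divergences: the vertical divergence of the radial vector field $y^i\partial_{x^i}$ along $S_xM$ contributes $\partial_{y^i}y^i$ reduced by one unit because the radial direction has been projected out of $N$, i.e. $n-1$, times $Xu$. Carrying this out cleanly — getting every sign right, handling the $H^{SM}\cong N\cong V^{SM}$ identifications and the radial projections consistently, and verifying the curvature cancellation — is where essentially all the remaining work of the proposition lies.
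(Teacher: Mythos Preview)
The paper does not actually prove this proposition: immediately after stating it, the notes say ``We will not prove this proposition'' and refer to the Paternain--Salo--Uhlmann article for details. The only thing the paper does is observe a redundancy and set it as an exercise: derive $[X,\vdiv]=-\hdiv$ from $[X,\vd]=-\hd$ by taking formal transposes, with the remark that the curvature pair is handled the same way. That is exactly your Step~2, so on the one piece the paper does treat, your argument coincides with it.

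Your Steps~1 and~3 therefore go well beyond the paper. The strategy you outline --- compute in the adapted frame $(\delta_{x^i},\partial_{y^i})$ using the three basic commutators on $TTM$, then project to $N$ --- is the standard one and is essentially what the referenced article does. Your identification of the mechanisms is correct: in $[X,\vd]$ the zero-homogeneity $y^k\partial_{y^k}u=0$ is what produces the $-(Xu)y^i$ correction in $\hd$; in $[X,\hd]$ the curvature enters through $[\delta_{x^i},\delta_{x^j}]$; and in the middle identity the $(n-1)$ comes from the vertical divergence of the radial field on the $(n-1)$-sphere. Your candid assessment that Step~3 is where the real work lies is accurate, and the normal-coordinates-at-a-point trick is a sensible way to suppress the Christoffel clutter. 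So: nothing to compare against beyond the transpose observation, but your plan for the rest is sound.
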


We will not prove this proposition\footnote{See~\cite{PSU-paper} for a proof and more details.}, but we observe a redundancy.

\begin{ex}
Prove the formula $[X,\vdiv]=-\hdiv$ assuming $[X,\vd]=-\hd$ using the definitions by formal transposes.
(A similar argument works for the two commutator relations involving the curvature operator.)
\end{ex}

\subsection{The Santal\'o formula}

Let us return briefly to integration over $SM$.
While the exact proofs would consume too much time, there is an important idea that we need to discuss: a change of variables associated with the geodesic flow.

To make everything well defined, we have to impose restrictions on the geometry.
First of all, we assume $M$ to be a compact Riemannian manifold with boundary.
One can define manifolds with boundary abstractly, but one can also think of $M$ as a compact subset with a smooth boundary inside a Riemannian manifold without boundary.

The manifold $M$ has a boundary $\partial M$, and so has its sphere bundle $SM$:
\begin{equation}
\partial(SM)
=
\{(x,v)\in SM;x\in\partial M\}.
\end{equation}
A vector at the boundary can point in three kinds of directions: inwards, tangentially to $\partial M$, or outwards.

Let $\nu(x)$ be the outer unit normal vector at $x\in\partial M$.
Then the tangential vectors at $x$ are precisely those that are normal to $\nu(x)$.
The inward pointing boundary is
\begin{equation}
\inwb
=
\{
(x,v)\in\partial(SM);
\ip{v}{\nu(x)}<0.
\}
\end{equation}
This set parametrizes all geodesics that start at the boundary and go inwards.

To describe how far the geodesic can be extended before falling out of the manifold, we define $\tau\colon SM\to\R$ to be the travel time function so that a geodesic starting at $(x,v)\in SM$ can be maximally extended to the future to be defined on $[0,\tau(x,v)]$.

We want to rule out two problems:
\begin{enumerate}
\item
There might be geodesics that do not meet the boundary and are thus not parametrized by $\inwb$.
\item
Some geodesics might start tangentially by still go inside the manifold.
\end{enumerate}
To rule out the first one, we assume that every maximal geodesic has finite length.
In other words, given any point and direction, the geodesic comes out in finite time.
To rule out the second one, we assume that the boundary is strictly convex in the sense that the second fundamental form of the boundary is positive definite.

As $\inwb\subset\partial(SM)\subset SM$ is a submanifold, it inherits a Riemannian metric.
Therefore one may integrate over it.
Let $\mu$ be the natural measure on $SM$ and $\lambda$ the one one $\partial(SM)$.
A measure more compatible with the geodesic flow is obtained by $\tilde\lambda=\abs{\ip{v}{\nu}}\lambda$.

\begin{proposition}[The Santal\'o formula]
\label{prop:Santalo}
Let $M$ be a compact Riemannian manifold with boundary so that every geodesic has finite length and the boundary is strictly convex.
Then for any smooth $u\colon SM\to\R$ we have
\begin{equation}
\begin{split}
&\int_{(x,v)\in SM}u(x,v)\dd\mu(x,v)
=
\\&\quad\int_{(x,v)\in\inwb}
\int_0^{\tau(x,v)}
u(\phi_t(x,v))
\dd t
\dd\tilde\lambda(x,v).
\end{split}
\end{equation}
\end{proposition}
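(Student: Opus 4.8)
The plan is to read the identity as a change of variables for the geodesic ``flow-out'' map. Put
\begin{equation*}
\mathcal D
=
\{(\theta,t)\colon\theta\in\inwb,\ 0\le t\le\tau(\theta)\}
\end{equation*}
and define $\Phi\colon\mathcal D\to SM$ by $\Phi(\theta,t)=\phi_t(\theta)$. The first task is to check that $\Phi$ is a bijection onto $SM$ up to a $\mu$-null set, and a diffeomorphism off that set. Given $(x,v)\in SM$ with $x$ in the interior, I would follow the geodesic backwards; by the finite-length assumption it meets $\partial M$ after a finite time $s_0\ge0$, at the point $\theta\coloneqq\phi_{-s_0}(x,v)\in\partial(SM)$. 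Since the geodesic lies in the interior for parameters just above $-s_0$, strict convexity of the boundary rules out the glancing case $\ip{v}{\nu}=0$ there, so in fact $\theta\in\inwb$ and $\Phi(\theta,s_0)=(x,v)$; uniqueness of $(\theta,s_0)$ is clear from construction. Thus $\Phi$ is onto $SM\setminus\partial(SM)$, and $\partial(SM)$ is $\mu$-null because it is a hypersurface. Smoothness of $\tau$, and hence of $\Phi$ and $\Phi^{-1}$, near such points follows from the implicit function theorem applied to $(\theta,t)\mapsto\rho(\pi(\phi_t(\theta)))$ for a boundary-defining function $\rho$, using that the geodesic meets $\partial M$ transversally --- again a consequence of strict convexity.

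Next I would compute the pulled-back density $\Phi^*\mu$. Since $\mu$ is a volume form on the $(2n-1)$-dimensional manifold $SM$ and $\partial_t\Phi=X\circ\Phi$, for vectors $w_1,\dots,w_{2n-2}$ tangent to a $\theta$-slice we have
\begin{equation*}
\Phi^*\mu(\partial_t,w_1,\dots,w_{2n-2})
=
(\iota_X\mu)(\der\Phi\,w_1,\dots,\der\Phi\,w_{2n-2}).
\end{equation*}
Because $\phi_t$ is the flow of $X$ we have $\phi_t^*X=X$, and Liouville's theorem gives $\phi_t^*\mu=\mu$; in the language of this course the latter is precisely the skew-adjointness $X^T=-X$ of the geodesic vector field. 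Hence $\phi_t^*(\iota_X\mu)=\iota_X\mu$. Writing $\Phi(\dum,t)=\phi_t\circ\Phi(\dum,0)=\phi_t|_{\inwb}$, the right-hand side above equals $(\iota_X\mu)_\theta(w_1,\dots,w_{2n-2})$, independently of $t$. Therefore $\Phi^*\mu=\der t\wedge\beta$, where $\beta$ is the pullback of $\iota_X\mu$ to $\inwb$.

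It then remains to identify $\beta$ with $\tilde\lambda$. On the hypersurface $\partial(SM)\subset SM$ one has $\iota_X\mu|_{\partial(SM)}=\ip{X}{\nu_{SM}}\,\lambda$, where $\nu_{SM}$ is the Sasaki-unit outward normal of $\partial(SM)$ in $SM$ and $\lambda$ is the induced measure. Using the horizontal--vertical splitting, $\partial(SM)$ at $(x,v)$ consists of the vectors whose horizontal part lies in $T_x\partial M$, so $\nu_{SM}$ is the horizontal lift of $\nu(x)$; since the geodesic vector field has $X_h=v$ and $X_v=0$, this gives $\ip{X}{\nu_{SM}}=\ip{v}{\nu(x)}$. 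Hence, as densities, $\beta=\abs{\ip{v}{\nu}}\lambda=\tilde\lambda$ on $\inwb$, the absolute value appearing because $\ip{v}{\nu}<0$ there. Combining with the previous step, $\Phi^*\mu=\der t\,\tilde\lambda$, and the ordinary change-of-variables formula together with Fubini yields
\begin{equation*}
\int_{SM}u\dd\mu
=
\int_{\mathcal D}u(\Phi(\theta,t))\,\Phi^*\mu
=
\int_{\inwb}\int_0^{\tau(\theta)}u(\phi_t(\theta))\dd t\dd\tilde\lambda(\theta),
\end{equation*}
which is the asserted formula.

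The main obstacle will be the first step. Liouville's theorem and the computation $\iota_X\mu|_{\partial(SM)}=\ip{v}{\nu}\lambda$ are routine once the machinery on $SM$ from the preceding sections is in place, but showing that $\Phi$ is a diffeomorphism onto a full-measure subset --- equivalently, that $\tau$ is finite and smooth off a null set and that no geodesic lingers tangentially along the boundary --- is exactly where the compactness, finite-length, and strict convexity hypotheses are genuinely used, and carefully setting up the boundary behaviour of geodesics is the bulk of the work.
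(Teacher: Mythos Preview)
The paper does not actually prove this proposition: it states ``We omit the proof'' and refers the reader to Sharafutdinov's lecture notes. So there is nothing to compare against, and your outline goes well beyond what the text offers.

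Your argument is the standard one and is correct in outline: parametrize $SM$ (up to a null set) by the flow-out map $\Phi(\theta,t)=\phi_t(\theta)$ from $\inwb\times\{0\le t\le\tau\}$, use Liouville's theorem to see that $\Phi^*\mu=\der t\wedge(\iota_X\mu)|_{\inwb}$, and identify $\iota_X\mu|_{\partial(SM)}$ with $\ip{v}{\nu}\lambda$ via the horizontal--vertical splitting and the fact that $X=(v,0)$ in that splitting. Your closing paragraph correctly locates where the hypotheses enter and where the honest work lies. Two small remarks: first, your invocation of ``$X^T=-X$'' from the notes is really the statement that $X$ is divergence-free for the Sasaki volume, which is equivalent to $\phi_t^*\mu=\mu$ --- you are using this correctly, just be aware that the notes merely assert skew-adjointness without proof. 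Second, when you write $\iota_X\mu|_{\partial(SM)}=\ip{X}{\nu_{SM}}\lambda$, this identity holds because $\mu$ is the Riemannian volume form of the Sasaki metric and $\nu_{SM}$ is the unit normal; it is worth saying that explicitly, since for a general top form one would not get the inner product with the normal but rather a contraction computation.
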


We omit the proof.\footnote{See~\cite[lemma 3.3.2]{S}.}

So, to integrate over $SM$, one can integrate over the space of all geodesics ($\inwb$) and then over each geodesic.
Think of this as a Fubini-type theorem.
In the usual Fubini theorem, one can write the plane as a disjoint union of parallel lines and integrate first over each line and then integrate all those integrals together.
Now we just write $SM$ as a union of trajectories of the geodesic flow; see exercise~\ref{ex:union-traject}.

\qa

\section{Geodesic X-ray tomography}
\label{sec:xrt}

This section is devoted to a problem whose solution serves as a recap of the course and shows how to apply the tools.
The question is:
Is a function $f\colon M\to\R$ on a Riemannian manifold uniquely determined by its integrals over all geodesics?

\subsection{The geodesic X-ray transform}

To formalize the question, we define the geodesic X-ray transform.
If $\Gamma$ is the set of all maximal unit speed geodesics on $M$, the geodesic X-ray transform of $f\colon M\to\R$ is the function $\xrt f\colon\Gamma\to\R$ given by
\begin{equation}
\xrt f(\gamma)
=
\int_a^b f(\gamma(t))\dd t
\end{equation}
for a maximal geodesic $\gamma\colon(a,b)\to M$.

Even if $f$ is smooth and compactly supported, the integral might not exist over all geodesics.
Therefore we need to impose some restrictions on the geometry of $M$.
We assume $M$ to be compact and all maximal geodesics to have finite length.
The the operator $\xrt$ is well defined on $C^\infty(M;\R)$.

The problem is easiest to study when the space $\Gamma$ of all geodesics has a good structure.
To that end we require that the boundary is strictly convex.
This ensures that all geodesics are parametrized by the submanifold $\inwb\subset SM$.

\begin{ex}
We can always take $\Gamma$ to be the quotient of $SM$ by the geodesic flow.
That is, we can define an equivalence relation on $SM$ so that $\theta\sim\theta'$ if and only if $\theta'=\phi_t(\theta)$ for some $t\in\R$.
There are manifolds for which the geodesic flow has a dense trajectory on $SM$.
Show that in this case the quotient $SM/{\sim}$ is not a topological manifold.
\end{ex}

\begin{ex}
How would you parametrize geodesics in $\R^n$?
The parametrization can be redundant.
Give a formula for $\xrt f$ in $\R^n$, when $f$ is smooth and compactly supported.
\end{ex}

Furthermore, to avoid problems near the boundary, we only study functions that are compactly supported in the interior of the manifold $M$.
That is, there is a positive distance between $\partial M$ and $\spt(f)$.
In this case we obtain an operator $\xrt\colon C^\infty_c(M;\R)\to C^\infty_c(\inwb;\R)$.

The question is: Is this operator injective?
That is, do the integrals of $f$ over all geodesics $\gamma\in\Gamma$ determine $f$ uniquely?

We shall show that the operator is indeed injective.
To do so, we need to show that if $f\in\ker(\xrt)$, then $f=0$.

\subsection{The transport equation}

Take any smooth $f\colon M\to\R$.
We define its integral function $u^f\colon SM\to\R$ to be
\begin{equation}
\label{eq:u-def}
u^f(x,v)
=
\int_0^{\tau(x,v)}f(\gamma_{x,v}(t))\dd t.
\end{equation}
Recall that $\gamma_{x,v}$ is the geodesic starting at $(x,v)$ and $\tau(x,v)$ is the travel time function.
The integral is taken from any point all the way up to the boundary.

As geodesics are parametrized by their starting points at $\inwb$, we may actually write $\xrt f=u^f|_{\inwb}$.
The restriction $u^f|_{\partial(SM)\setminus\inwb}$ is always zero because the geodesics to be integrated over have zero length.

\begin{ex}
The manifold $M$ with boundary $\partial M$ can be thought of as follows.
Consider a Riemannian manifold $\tilde M$ without boundary and a smooth function $\rho\colon\tilde M\to\R$.
Suppose $M=\rho^{-1}([0,\infty))$, $\partial M=\rho^{-1}(0)$, and $\der\rho\neq0$ at $\partial M$.
(Smooth domains can always be defined in terms of a smooth defining function $\rho$ like this.)

Take any $(x,v)\in SM\setminus\partial(SM)$ so that the maximal geodesic starting there meets boundary in finite time and is not tangent to it at the exit point.
Use the implicit function theorem to show that the travel time function $\tau$ is smooth in a neighborhood of $(x,v)$.
It follows then from our assumptions that $\tau$ is smooth in all of $SM\setminus\partial(SM)$.
\end{ex}

\begin{lemma}
\label{lma:u-smooth}
If $f$ is a smooth compactly supported function in the kernel of $\xrt$, then $u^f$ is smooth and compactly supported.
\end{lemma}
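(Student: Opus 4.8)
The plan is to prove two things separately: that $u^f$ is smooth on the interior $SM\setminus\partial(SM)$, and that it vanishes identically on a neighbourhood of $\partial(SM)$. Together these give that $u^f$ is smooth on all of $SM$ and supported in a compact subset of $SM\setminus\partial(SM)$, which is what ``smooth and compactly supported'' should mean here.

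First I would treat the interior. By the preceding exercise the travel time $\tau$ is smooth on $SM\setminus\partial(SM)$, the map $(x,v,t)\mapsto\gamma_{x,v}(t)$ is smooth by smoothness of the geodesic flow, and $f$ is smooth. Differentiating $u^f(x,v)=\int_0^{\tau(x,v)}f(\gamma_{x,v}(t))\dd t$ under the integral sign (Leibniz's rule: the variable upper limit contributes a term $\tau$-derivative times $f(\gamma_{x,v}(\tau))$, which is harmless since everything is smooth) shows $u^f\in C^\infty(SM\setminus\partial(SM))$. Note that this step uses nothing about the kernel.

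The kernel hypothesis enters in controlling $u^f$ near $\partial(SM)$. Write $\tau_-(x,v)=\tau(x,-v)$ for the backward travel time, so the maximal geodesic through $(x,v)$ is $\gamma_{x,v}|_{[-\tau_-(x,v),\tau(x,v)]}$ with both endpoints on $\partial M$. Since $\xrt f=0$, the integral of $f$ over this whole geodesic vanishes, which rewrites as
\[
u^f(x,v)
=
\int_0^{\tau(x,v)}f(\gamma_{x,v}(t))\dd t
=
-\int_{-\tau_-(x,v)}^{0}f(\gamma_{x,v}(t))\dd t .
\]
Let $\delta=d(\spt f,\partial M)>0$. By strict convexity of $\partial M$, the function $\tau$ extends continuously to $\partial(SM)$ and equals $0$ on $\{(x,v)\in\partial(SM):\ip{v}{\nu(x)}\geq 0\}$, and likewise $\tau_-$ extends continuously with value $0$ on $\inwb$. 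Hence: (i) for $(x,v)$ in a small enough neighbourhood of $\{(x,v)\in\partial(SM):\ip{v}{\nu(x)}\geq 0\}$, the forward segment $\gamma_{x,v}([0,\tau(x,v)])$ stays within distance $<\delta$ of $\partial M$, so it misses $\spt f$ and the first formula gives $u^f(x,v)=0$; (ii) for $(x,v)$ in a small enough neighbourhood of $\inwb$, the backward segment $\gamma_{x,v}([-\tau_-(x,v),0])$ stays within distance $<\delta$ of $\partial M$, misses $\spt f$, and the second formula gives $u^f(x,v)=0$. Every point of $\partial(SM)$ lies in one of these two neighbourhoods, so $u^f$ vanishes on a neighbourhood of $\partial(SM)$ in $SM$.

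Combining the two parts, $u^f$ is smooth away from $\partial(SM)$ and is the zero function near $\partial(SM)$; hence $u^f\in C^\infty(SM)$ and $\spt u^f$ is a compact subset of $SM\setminus\partial(SM)$. The step I expect to be the real work is the control of the flow near the glancing set $\{\ip{v}{\nu}=0\}$ --- the claims that $\tau$ and $\tau_-$ tend to $0$ there. This is exactly where strict convexity (positive definiteness of the second fundamental form of $\partial M$) is used, and it is also what the preceding exercise on smoothness of $\tau$ quietly rests on; I would make it quantitative via the standard estimate that a geodesic nearly tangent to $\partial M$ re-exits $M$ in time $\Order$ of the square root of its distance to the glancing set, which confines the relevant segments to an arbitrarily thin collar of $\partial M$. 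The only other care point, justifying differentiation under the integral with a merely smooth (not constant) upper limit, is routine.
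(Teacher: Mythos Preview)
Your proof is correct and follows essentially the same line as the paper's: smoothness on $SM\setminus\partial(SM)$ from smoothness of the integrand and of $\tau$, and vanishing near $\partial(SM)$ by combining the kernel identity $u^f(x,v)+u^f(x,-v)=0$ with the fact that one of the two half-geodesics misses $\spt f$. The paper organizes the boundary step by taking $x$ close to $\partial M$ uniformly in $v$ and footnotes that the geometric claim is ``a little tricky to prove precisely''; your split into neighbourhoods of the outward and inward halves of $\partial(SM)$, together with the continuity of $\tau$ and $\tau_-$ at the glancing set via strict convexity, is exactly the content of that footnote made explicit.
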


\begin{proof}
Everything appearing in the defining integral~\eqref{eq:u-def} is smooth in the interior $SM\setminus\partial(SM)$, so $u^f$ is smooth in this set.

If $x$ is close enough to $\partial M$, then for any $v\in S_xM$ either $\gamma_{x,v}$ or $\gamma_{x,-v}$ will avoid the support of $f$ for all future times.\footnote{This is a little tricky to prove precisely, but the geometric intuition is hopefully clear enough.}
As $f\in\ker(\xrt)$, we have $u^f(x,v)+u^f(x,-v)=0$.
Thus $u^f(x,v)=0$ when $x$ is close enough to $\partial M$.
\end{proof}

\begin{ex}
Now that we have established that $u^f$ is regular, it remains to establish its crucial property.
Show that $Xu^f=-\pi^*f$.

Here the pullback $\pi^*f$ means the composition $f\circ\pi$.
As $f$ is a function of $x\in M$ only, and $\pi^*f$ promotes it into a function of $(x,v)\in SM$ which does not depend on $v$.
\end{ex}

We now know that if $\xrt f=0$, then $u^f$ is smooth (and compactly supported) and satisfies the transport equation
\begin{equation}
\begin{cases}
Xu^f=-\pi^*f & \text{in }SM,\\
u^f=0 & \text{on }\partial(SM).
\end{cases}
\end{equation}
We will show that this boundary value problem for a partial differential equation has the unique solution $u^f=0$.
It then follows that $\pi^*f=-Xu^f=0$ and so $f=0$.
This shows that $\xrt$ is injective.

To show the uniqueness of the solution of the transport equation, observe that the right-hand side of the transport equation $Xu^f=-\pi^*f$ is independent of direction.
Therefore is derivative with respect to the direction $v$ vanishes.
In other words,
\begin{equation}
0
=
\vd(-\pi^*f)
=
\vd X u^f.
\end{equation}
Now we have found a homogeneous second order equation for $u^f$.

\subsection{The Pestov identity}

To show uniqueness of solutions to the PDE $\vd X u=0$, we will use an energy identity known as a Pestov identity or Mukhometov--Pestov identity.
The identity is not hard to prove using our tools, but it can be hard to guess.

\begin{proposition}[Pestov identity]
\label{prop:Pestov}
If $u\colon SM\to\R$ is smooth and compactly supported, then
\begin{equation}
\int_{SM}\abs{\vd Xu}^2
=
\int_{SM}\abs{X\vd u}^2
-
\int_{SM}\ip{\vd u}{R\vd u}
+
\int_{SM}\abs{Xu}^2.
\end{equation}
\end{proposition}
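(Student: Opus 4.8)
The plan is to prove the identity purely formally: feed the commutator relations of Proposition~\ref{prop:commutators} through the integration-by-parts rules $X^T=-X$, $\hdiv=-\hd^T$ and $\vdiv=-\vd^T$ on $SM$. Since $u$ is compactly supported, every boundary term produced along the way vanishes, so only commutator terms survive. No positivity is used anywhere; this is an exact identity valid on any Riemannian manifold.

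First I would rewrite the left-hand side. The commutator $[X,\vd]=-\hd$ gives $\vd Xu=X\vd u+\hd u$, so expanding the square of this section of $N$ and integrating over $SM$ yields
\begin{equation}
\int_{SM}\abs{\vd Xu}^2
=
\int_{SM}\abs{X\vd u}^2
+
2\int_{SM}\ip{X\vd u}{\hd u}
+
\int_{SM}\abs{\hd u}^2.
\end{equation}
Two of the three terms already appear in the claimed right-hand side, so the problem reduces to showing that $2\int_{SM}\ip{X\vd u}{\hd u}+\int_{SM}\abs{\hd u}^2$ collapses to a multiple of $\int_{SM}\abs{Xu}^2$ minus the curvature integral.

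The remaining work is a chain of integrations by parts applied to the cross term $I\coloneqq\int_{SM}\ip{X\vd u}{\hd u}$. Moving $X$ off by skew-adjointness and using $[X,\hd]=R\vd$ rewrites it as $I=-\int_{SM}\ip{\vd u}{\hd Xu}-\int_{SM}\ip{\vd u}{R\vd u}$; this is where the curvature term is created. On the leftover piece I would move $\hd$ off with $\hdiv=-\hd^T$, replace $\hdiv\vd u$ by $\vdiv\hd u+(n-1)Xu$ using the third commutator, move $\vdiv$ back off with $\vdiv=-\vd^T$, and finally re-expand $\vd Xu=X\vd u+\hd u$ one more time. The net effect is a linear equation in which $I$, together with $\int_{SM}\abs{\hd u}^2$, $\int_{SM}\abs{Xu}^2$ and the curvature integral, all reappear; solving it for $I$ and substituting back into the display above produces the asserted identity, with the coefficient of $\int_{SM}\abs{Xu}^2$ inherited from the factor in the third commutator (so in dimension $n$ it comes out as $n-1$, consistent with the statement when $\dim M=2$).

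The main obstacle is exactly this bookkeeping of signs and constants: one must arrange the integrations by parts so that $I$ genuinely resurfaces, for otherwise there is nothing to solve for, and each transpose step must be legitimate for a section of $N$ rather than for a function. That last point is fine because $X$ acts on sections of $N$ by the covariant derivative along geodesics, which is metric-compatible (exercise~\ref{ex:Dt-ip}, carried over to $SM$), so $\int_{SM}X\ip{V}{W}=0$ for compactly supported sections $V,W$ and hence $X^T=-X$ holds on sections as well. Nothing else is needed; in particular no curvature sign condition enters.
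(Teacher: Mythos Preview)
Your argument is correct and complete; carrying out the integrations by parts exactly as you outline does give $2I+\int_{SM}\abs{\hd u}^2=(n-1)\int_{SM}\abs{Xu}^2-\int_{SM}\ip{\vd u}{R\vd u}$, which combined with your initial expansion yields the Pestov identity. You are also right that the coefficient on $\int_{SM}\abs{Xu}^2$ is $n-1$: the paper's own proof produces $(n-1)$ as well (and this is how the identity is used later in the proof of theorem~\ref{thm:xrt}), so the displayed statement simply has a typo.

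The route, however, is organized differently from the paper's. The paper never expands $\vd Xu$ via $[X,\vd]=-\hd$; instead it writes $\aabs{\vd Xu}^2-\aabs{X\vd u}^2$ directly as $\iip{(X\vdiv\vd X-\vdiv XX\vd)u}{u}$ and then simplifies the fourth-order operator $X\vdiv\vd X-\vdiv XX\vd$ purely algebraically using the commutator list, arriving at $-(n-1)X^2+\vdiv R\vd$ in one block. Your version introduces $\hd u$ explicitly and tracks a cross term through a cycle of adjoints until it returns to itself; this makes it transparent exactly which commutator generates the curvature term ($[X,\hd]=R\vd$) and which one generates the $(n-1)$ ($\hdiv\vd-\vdiv\hd=(n-1)X$), at the cost of a slightly longer bookkeeping chain. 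The paper's approach is tidier as a single operator computation; yours is more illuminating about the mechanism.
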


\begin{proof}
We will write the various integrals as norms and inner products in $L^2(SM)$.
Compact support allows us to integrate by parts without boundary terms.
We want to compute
\begin{equation}
\begin{split}
\aabs{\vd Xu}^2
-\aabs{X\vd u}^2
&=
\iip{\vd Xu}{\vd Xu}
-\iip{X\vd u}{X\vd u}
\\&=
-\iip{\vdiv\vd Xu}{Xu}
+\iip{XX\vd u}{\vd u}
\\&=
\iip{X\vdiv\vd Xu}{u}
-\iip{\vdiv XX\vd u}{u}
\\&=
\iip{(X\vdiv\vd X-\vdiv XX\vd)u}{u}.
\end{split}
\end{equation}
To simplify this, we apply the commutator rules of proposition~\ref{prop:commutators} to find
\begin{equation}
\begin{split}
X\vdiv\vd X-\vdiv XX\vd
=
&=
(\vdiv X-\hdiv)\vd X-\vdiv X(\vd X-\hd)
\\&=
-\hdiv\vd X+\vdiv X\hd
\\&=
-\hdiv\vd X+\vdiv(\hd X+R\vd)
\\&=
(\vdiv\hd-\hdiv\vd)X
+\vdiv R\vd
\\&=
-(n-1)XX
+\vdiv R\vd.
\end{split}
\end{equation}
Therefore
\begin{equation}
\begin{split}
\aabs{\vd Xu}^2
-\aabs{X\vd u}^2
&=
\iip{(X\vdiv\vd X-\vdiv XX\vd)u}{u}
\\&=
-(n-1)\iip{XXu}{u}
+\iip{\vdiv R\vd u}{u}
\\&=
(n-1)\iip{Xu}{Xu}
-\iip{R\vd u}{\vd u}.
\end{split}
\end{equation}
This is the claimed identity.
\end{proof}

\begin{ex}
What is the commutator $[\vdiv\vd,X]$?
\end{ex}

The Pestov identity is easy to use when $\vd Xu=0$ as in our case.
Let us try to understand the structure of the right-hand side better.
The first two terms only depend on $u$ through $V\coloneqq\vd u$, which is a smooth section of the bundle $N$.
Per remark~\ref{rmk:N-NVF} this section gives rise to a normal vector field $V_\gamma\in\NVF_0(\gamma)$ along any maximal geodesic $\gamma$.
The zero boundary values are due to compact support.

\begin{lemma}
\label{lma:Pestov-IF}
If $V$ is a smooth section of the bundle $N$, then
\begin{equation}
\begin{split}
&\int_{SM}\left(\abs{XV}^2-\ip{V}{RV}\right)
\\&\quad=
\int_{(x,v)\in\inwb}
\IF_{\gamma_{x,v}}(V_{\gamma_{x,v}},V_{\gamma_{x,v}})
\dd\tilde\lambda(x,v),
\end{split}
\end{equation}
where $\lambda$ is the Riemannian volume measure on $\inwb$, $\IF_\gamma$ is the index form along $\gamma$, and $V_\gamma$ is the normal vector field along $\gamma$ arising from the section $V$ of $N$.
\end{lemma}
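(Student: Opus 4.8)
The plan is to obtain this identity by feeding the Santal\'o formula (proposition~\ref{prop:Santalo}) the correct integrand and then recognising the inner integral along each geodesic as the index form. The two facts that do the work are already available: the geodesic vector field $X$ acting on sections of $N$ restricts along a geodesic to the covariant derivative $D_t$, in the sense that $(XV)_\gamma = D_tV_\gamma$ for the restricted field $V_\gamma$ of remark~\ref{rmk:N-NVF}; and the operator $R$ on sections of $N$ is the one induced fibrewise by the curvature operator $R_\gamma$ of definition~\ref{def:Rg}. First I would set
\[
w(x,v) = \abs{XV(x,v)}^2 - \ip{V(x,v)}{RV(x,v)},
\]
which is a smooth function on $SM$ since $V$ is a smooth section of $N$, $XV$ is again a section of $N$, $R$ preserves smooth sections, and the Sasaki inner product is smooth. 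Under the standing hypotheses of this section --- $M$ compact, $\partial M$ strictly convex, every maximal geodesic of finite length --- proposition~\ref{prop:Santalo} applies and gives
\[
\int_{SM} w \dd\mu = \int_{(x,v)\in\inwb} \int_0^{\tau(x,v)} w(\phi_t(x,v)) \dd t \dd\tilde\lambda(x,v).
\]

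Next I would evaluate the inner integral for a fixed $(x,v)\in\inwb$. Writing $\gamma = \gamma_{x,v}\colon[0,\tau(x,v)]\to M$, we have $\phi_t(x,v) = (\gamma(t),\dot\gamma(t))$, so restricting $V$ along this lift of $\gamma$ yields exactly the normal vector field $V_\gamma(t) = V(\gamma(t),\dot\gamma(t))$; it is automatically normal because the fibre $N_{(\gamma(t),\dot\gamma(t))}$ is by definition the orthogonal complement of $\dot\gamma(t)$, so $\IF_{\gamma_{x,v}}(V_{\gamma_{x,v}},V_{\gamma_{x,v}})$ makes sense. Using $(XV)_\gamma = D_tV_\gamma$ gives $\abs{XV(\phi_t(x,v))} = \abs{D_tV_\gamma(t)}$, and the fact that $R$ is induced by $R_\gamma$ gives $\ip{V(\phi_t(x,v))}{RV(\phi_t(x,v))} = \ip{V_\gamma(t)}{R_\gamma V_\gamma(t)}$. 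Hence
\[
\int_0^{\tau(x,v)} w(\phi_t(x,v)) \dd t = \int_0^{\tau(x,v)} \left( \abs{D_tV_\gamma}^2 - \ip{V_\gamma}{R_\gamma V_\gamma} \right) \dd t = \IF_{\gamma_{x,v}}(V_{\gamma_{x,v}},V_{\gamma_{x,v}}),
\]
the last equality being the definition of the index form on the interval $[0,\tau(x,v)]$. Substituting this into the Santal\'o formula above produces the asserted identity.

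I do not expect a genuine obstacle here: no new estimate is required, only a careful assembly of results already proven. The step I would treat most carefully is the passage through the Santal\'o formula: confirming that its hypotheses are precisely the standing assumptions of this section, that the parametrisation $\phi_t(x,v) = (\gamma_{x,v}(t),\dot\gamma_{x,v}(t))$ is applied consistently, and that the measure $\tilde\lambda$ it produces is exactly the $\tilde\lambda$ in the statement. A secondary point worth a remark is that $V_\gamma$ is genuinely defined and smooth on the closed interval $[0,\tau(x,v)]$, which follows from smoothness of $V$ on $SM$ up to $\partial(SM)$ and finiteness of $\tau$.
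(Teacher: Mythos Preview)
Your proposal is correct and follows essentially the same route as the paper: apply the Santal\'o formula to the integrand $\abs{XV}^2-\ip{V}{RV}$, then identify the inner integral along each $\gamma_{x,v}$ with $\IF_{\gamma_{x,v}}(V_{\gamma_{x,v}},V_{\gamma_{x,v}})$ using $(XV)_\gamma=D_tV_\gamma$ and the definition of $R$. Your additional remarks on verifying the hypotheses of Santal\'o and the smoothness of $V_\gamma$ on the closed interval are careful but not something the paper spells out.
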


\begin{proof}
We begin by applying the Santal\'o formula of proposition~\ref{prop:Santalo} to our integral over $SM$.
In the notation of the proposition, $u(x,v)=\abs{XV}^2-\ip{V}{RV}|_{(x,v)}$.
Santal\'o gives an integral over the inward pointing boundary $\inwb$ and over each geodesic $\gamma=\gamma_{x,v}$ we end up with the integral
\begin{equation}
\begin{split}
\int_0^{\tau(x,v)}
u(\phi_t(x,v))
\dd t
&=
\int_0^{\tau(x,v)}
\left(\abs{(XV)_\gamma}^2-\ip{V_\gamma}{RV_\gamma}\right)
\dd t
\\&=
\int_0^{\tau(x,v)}
\left(\abs{D_tV_\gamma}^2-\ip{V_\gamma}{RV_\gamma}\right)
\dd t
\\&=
\IF_\gamma(V_\gamma,V_\gamma).
\end{split}
\end{equation}
This completes the proof.
\end{proof}

To prove uniqueness, we want the right-hand side of our Pestov identity to be positive.
We shall see how to do so soon, but we need to make the right assumption to guarantee positivity.

\subsection{Injectivity on simple manifolds}

\begin{definition}
A simple Riemannian manifold is a compact Riemannian manifold with strictly convex boundary so that each maximal geodesic has finite length and there are no conjugate points.
\end{definition}

For example, the closed Euclidean ball is simple.
We can think of simple manifolds as ``ball-like'', but they are quite a bit more flexible.

\begin{theorem}
\label{thm:xrt}
The geodesic X-ray transform is injective on smooth compactly supported functions on a simple Riemannian manifold of any dimension $n\geq2$.
\end{theorem}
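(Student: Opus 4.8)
The plan is to push the transport equation for the integral function through the Pestov identity and then close the argument with the absence of conjugate points. So let $f\in\ker(\xrt)$; I want to deduce $f=0$. First I would form the integral function $u=u^f$ of~\eqref{eq:u-def}. By Lemma~\ref{lma:u-smooth} it is smooth and compactly supported in the interior of $SM$, and it solves the transport equation $Xu=-\pi^*f$ with $u=0$ on $\partial(SM)$. Since $\pi^*f$ does not depend on the direction variable, its vertical gradient vanishes, so $\vd Xu=-\vd\pi^*f=0$.

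Next I would apply the Pestov identity of Proposition~\ref{prop:Pestov} to $u$ (using the coefficient $n-1$ in the last term that its proof actually produces). Its left-hand side $\int_{SM}\abs{\vd Xu}^2$ is zero, so the identity becomes
\begin{equation}
0
=
\int_{SM}\left(\abs{X\vd u}^2-\ip{\vd u}{R\vd u}\right)
+
(n-1)\int_{SM}\abs{Xu}^2 .
\end{equation}
The field $V\coloneqq\vd u$ is a smooth section of the bundle $N$, and it is compactly supported because $u$ is. By Lemma~\ref{lma:Pestov-IF} the first integral equals $\int_{(x,v)\in\inwb}\IF_{\gamma_{x,v}}(V_{\gamma_{x,v}},V_{\gamma_{x,v}})\dd\tilde\lambda(x,v)$, and compact support of $V$ forces each $V_{\gamma_{x,v}}$ to vanish at the endpoints of its geodesic, so $V_{\gamma_{x,v}}\in\NVF_0(\gamma_{x,v})$.

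Here the simplicity hypothesis does the work: because there are no conjugate points along any maximal geodesic of $M$, Theorem~\ref{thm:IF-definite}(1) (equivalently Lemma~\ref{lma:IF>0}) says that $\IF_{\gamma_{x,v}}$ is positive definite on $\NVF_0(\gamma_{x,v})$, hence nonnegative. Therefore the displayed identity exhibits $0$ as a sum of two nonnegative quantities, so both vanish. As $n\geq2$, this gives $\int_{SM}\abs{Xu}^2=0$, hence $Xu\equiv0$ by continuity, hence $\pi^*f=-Xu=0$, hence $f=0$. Thus $\ker(\xrt)=0$ and $\xrt$ is injective.

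The steps that need care rather than cleverness are Lemma~\ref{lma:u-smooth} --- which is where finiteness of geodesic lengths and strict convexity of the boundary enter, through smoothness of the travel time $\tau$ --- and the bookkeeping of boundary terms: compact support of $u$ is what kills the boundary contributions in the integrations by parts hidden inside the Pestov identity and Lemma~\ref{lma:Pestov-IF}, and it is also what places $V_{\gamma_{x,v}}$ in $\NVF_0$ so that the index-form positivity applies. Beyond that the proof is purely an assembly of results already established, and the genuine difficulty lives in those earlier results (the Pestov identity and the index-form theory), not in the theorem itself. It is worth recording that all three ingredients of simplicity are used exactly once: finite geodesic length and strict convexity legitimize the Santal\'o change of variables and the parametrization of geodesics by $\inwb$ inside Lemma~\ref{lma:Pestov-IF}, while the no-conjugate-points condition is precisely what makes $\IF_{\gamma_{x,v}}$ positive; this also explains why $n\geq2$ cannot be dropped, since for $n=1$ the index-form term is trivial and $\xrt$ is genuinely non-injective.
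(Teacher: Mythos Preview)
Your proof is correct and follows essentially the same route as the paper's own argument: form $u^f$, observe $\vd Xu^f=0$, feed this into the Pestov identity, rewrite the first two right-hand terms as an integral of index forms via Lemma~\ref{lma:Pestov-IF}, invoke positive definiteness of the index form from Theorem~\ref{thm:IF-definite} (no conjugate points), and conclude $Xu^f=0$ from the $(n-1)\aabs{Xu^f}^2$ term. You even caught the coefficient $(n-1)$ that the proof of Proposition~\ref{prop:Pestov} actually produces (as opposed to its stated form), and your closing paragraph correctly isolates where each ingredient of simplicity is consumed.
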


\begin{proof}
Let us take a smooth and compactly supported $f\colon M\to\R$.
We assume that $\xrt f=0$ and aim to show that $f=0$.

The integral function $u^f\colon SM\to\R$ is also smooth and compactly supported by lemma~\ref{lma:u-smooth}.
As we found, this function satisfies $\vd Xu^f=0$.

Let us then turn to the Pestov identity of proposition~\ref{prop:Pestov}:
\begin{equation}
\aabs{\vd Xu^f}^2
=
\aabs{X\vd u^f}^2
-\iip{\vd u^f}{R\vd u^f}
+(n-1)\aabs{Xu^f}^2.
\end{equation}
The left-hand side vanishes.

By theorem~\ref{thm:IF-definite} the index form is positive definite in the absence of conjugate points.
Combining this with lemma~\ref{lma:Pestov-IF} shows that 
\begin{equation}
\aabs{X\vd u^f}^2
-\iip{\vd u^f}{R\vd u^f}
\geq
0.
\end{equation}
Therefore our energy identity reduces to\footnote{This estimate hints at things failing when $n=1$. And injectivity does indeed fail.}
\begin{equation}
0
\geq
(n-1)\aabs{Xu^f}^2.
\end{equation}
This can only hold if $Xu^f=0$.
Therefore $\pi^*f=-Xu^f=0$ and so $f=0$.
\end{proof}

\begin{remark}
One would obtain positivity in the Pestov identity more directly if each of the three terms on the right-hand side is positive.
This is the case if $R\leq0$ in the appropriate sense.
This brings us back to sections~\ref{sec:JF-const-curv} and~\ref{sec:IF-cc}, where we saw that there are no conjugate points in non-positive curvature.
\end{remark}

\begin{iex}
To summarize, list which tools developed through this course were used to prove theorem~\ref{thm:xrt}.
\end{iex}

\subsection{Applications}

The geodesic X-ray transform appears frequently in the theory of inverse problems.
It arises in the study of many inverse boundary value problems for PDEs and as a linearization of non-linear geometric problems.
For example, the derivative of the distance between two points with respect to the Riemannian metric is an X-ray transform of the variation of the metric tensor.
This makes the geodesic X-ray transform appear in linearized travel time tomography in non-Euclidean geometry, which is useful for global seismology and ultrasound imaging.
In $\R^3$ and $\R^2$ the transform has direct medical applications, as computerized tomography (CT) is based on it.

\qa

\section{Looking back and forward}

\subsection{Ways to view geodesics}

We found a number of different ways to see geodesics, as
\begin{enumerate}
\item
critical points of the length functional,
\item
minimizers of length (at least locally),
\item
solutions to the geodesic equation,
\item
as curves that parallel transport their velocity,
\item
as projections of trajectories of the geodesic flow, and
\item
as curves that lift to integral curves of the geodesic vector field.
\end{enumerate}

\begin{ex}
Where were these different aspects discussed in the notes?
Give, briefly and in your own words, each definition of a geodesic from the list above.
\end{ex}

\begin{ex}
How are the different definitions linked to each other?
After all, they define the same concept.
\end{ex}

In addition to the length functional $\ell(\gamma)=\int\abs{\dot\gamma}\dd t$ one can also study the energy functional $E(\gamma)=\frac12\int\abs{\dot\gamma}^2\dd t$.
We left it aside as it does not have an equally clear geometric interpretation.
It has the nice property that all critical points are constant speed geodesics, so it leads to the geodesic equation more directly.

One way to view geodesics that we ignored is to realize the geodesic flow as a Hamiltonian flow on the cotangent bundle with its natural symplectic structure.
This topic is highly recommended to readers with any familiarity with Hamiltonian mechanics --- and those without any.

\subsection{Families of geodesics}

The course had two main goals: to understand individual geodesics and families of geodesics.
There were several different objects that collected or compared various geodesics:
\begin{enumerate}
\item
Jacobi fields,
\item
the exponential map, and
\item
the geodesic flow.
\end{enumerate}

\begin{ex}
Summarize what geodesics are described by each of the three objects above.
\end{ex}

Section~\ref{sec:exp-flow} compared the exponential map to the geodesic flow.
Differentiating either one leads to Jacobi fields.

The flow lives on $SM$, so its differential lives on $TSM$.
This was split in three directions: geodesic, horizontal, and vertical.
The horizontal and vertical components on $TSM$ correspond to Jacobi fields and their covariant derivatives.

The geodesic flow is always a diffeomorphism, but the exponential map as its restriction can fail to be so.
This failure happens locally at conjugate points.
Points are conjugate along a geodesic if a Jacobi field vanishes at both points but not identically.

\subsection{More aspects of geodesics}

We studied local minimization of length and its connections to the index form and conjugate points.
All conjugate points to a given point can be collected in a so-called conjugate locus of that point.
Theorem~\ref{thm:IF-definite} can be rephrased so that geodesics are locally minimal only up to the conjugate locus but not beyond.

The corresponding global minimization works up to the so-called cut locus --- this is not a theorem but a definition.
The conjugate locus is further away than the cut locus by theorem~\ref{thm:IF-definite}.
We did not study global minimization properties of geodesics.

In addition to distances between points, one can study distance between general submanifolds.
Zero-dimensional submanifolds are points.
Existence and uniqueness of minimizing curves between two submanifolds depends on the geometry of the submanifolds in addition to that of the whole manifold.
A minimizing curve is always a geodesic, but the boundary conditions are different.

The endpoint is not fixed, but the direction must be normal to the submanifold.
The nature of this condition depends on the codimension of the submanifold.
The first variation formula has a boundary term that forces this.
The second variation formula has a more complicated boundary term depending on the curvature of the submanifold.

When we studied minimization between points, we used Jacobi fields that correspond to families of geodesics between the two points.
Now those have to be replaced by families of geodesics that are normal to the submanifolds at the endpoints.
This leads to conditions on vanishing Jacobi fields but the initial conditions are different.
Something critical happened when the two points were conjugate.
Similarly, something critical happens with the distance from a hypersurface to a point when the point is a focal point.
Focal points are analogous to conjugate points, but one endpoint has to be replaced by a hypersurface.

We briefly touched upon geodesic spheres in section~\ref{sec:sphere}.
The shapes of these spheres have interesting properties as one varies the radius and the center.
There is an evolution equation for the shape operator of the geodesic sphere that corresponds to the Jacobi equation.

In general, our endeavors have been very local in nature, but there is a substantial amount of global geometry of geodesics to be studied.

\subsection{General geometry}

If you have not read Riemannian geometry before this course, perhaps you have now found a reason to look into the fundamentals of the theory.
Lee's book~\cite{L} is highly recommended for that purpose.

The theory of Riemannian geometry branches out quickly, and we have only focused on the branch along a geodesic.
Matters like integration, curvature, submanifolds, general fiber bundles, and global geometry deserve a look.

Differential geometry does not end with Riemannian geometry.
Pseudo-Riemannian manifolds are very similar to Riemannian manifolds.
The metric tensor is not assumed to be a positive definite and symmetric matrix in local coordinates, but only invertible and symmetric.
With positivity we lose a sense of distance, but many of the considerations do not really rely on distance.
The geodesic equation, parallel transport, the exponential map, Jacobi fields, and the flow work just as well.
If one wants to introduce geodesics as critical points of a functional, energy is better than length.
Pseudo-Riemannian, especially Lorentzian, manifolds are heavily used in general relativity.

A step in a different direction can be taken by throwing away not positivity but the existence of a quadratic form.
If we only require that every tangent space has a (smooth and strictly convex) norm, we end up with Finsler geometry.
Many of our considerations generalize to Finsler geometry, but the details are more technical.
A Finsler manifold has a natural Riemannian metric on each tangent space, but this metric depends on a reference direction.
Therefore Finsler geometry can be seen as ``anisotropic Riemannian geometry''.

Of course, one can drop all metric properties altogether and only study a smooth manifold or perhaps introduce another kind of additional structure.
Or one can keep a metric structure but lose the smooth one and study metric geometry.

The rabbit hole is deep and branches indefinitely.
Nevertheless, the reader is invited to enter.

\subsection{Geodesic flows}

We have only scratched the surface of the theory of geodesic flows.
The global and local geometry of the manifold influence the behavior of the flow.
For example, curvature has an effect on ergodicity.
For a detailed and deep exposition of geodesic flow, see the book~\cite{P} by Paternain.

\subsection{Integral geometry}

In section~\ref{sec:xrt} we studied whether a function is determined by its integrals over geodesics.
This is an example of an inverse problem in integral geometry.
There are a number of different problems in this spirit.
The object to be determined can be a tensor field or a connection, for example.
The problems can also be non-linear, and the task can be to determine the whole manifold from some kind of data.

For integral geometry on manifolds, we recommend the books by Sharafutdinov~\cite{S} and Paternain--Salo--Uhlmann~\cite{PSU-book}.
For the details we omitted in section~\ref{sec:xrt}, the article~\cite{PSU-paper} and its appendices are a good reference.
If you want a big picture of the current state of research on such problems, the review~\cite{IM} and references therein can get you started.

These problems are interesting and highly non-trivial already in Euclidean geometry.
For an overview of the various tools and ideas in Euclidean X-ray tomography, we refer the reader to~\cite{I}.

Another reference on these topics is the email address given on the cover page of these notes.

\subsection{Feedback}

\begin{iex}
Which results or ideas did you find most interesting in this course?
\end{iex}

\begin{iex}
At times, this course had more focus on ideas than technical details than usual.
How did you find this kind of a course?
\end{iex}

\begin{iex}
Do you feel that something was left out?
Is there something --- perhaps some of the further study directions mentioned above --- that you would like to have seen covered?
\end{iex}

\qa

Previous feedback has been of great help in improving these notes.
Many thanks for all students who contributed!

\nts{Vektorikentän komponentit ovat koordinaattifunktioiden derivaattoja.}

\end{document}